\newtheorem{thm}[equation]{Theorem}
\numberwithin{equation}{section}
\newtheorem{cor}[equation]{Corollary}
\newtheorem{rmk}[equation]{Remark}
\newtheorem{claim}[equation]{Claim}
\newtheorem{lem}[equation]{Lemma}
\newtheorem{conj}[equation]{Conjecture}
\newtheorem{defin}[equation]{Definition}
\newtheorem{diag}[equation]{Table}
\newtheorem{prop}[equation]{Proposition}
\renewcommand{\theequation}{\thesection.\arabic{equation}}
\begin{document}
\raggedbottom \voffset=-.7truein \hoffset=0truein \vsize=8truein
\hsize=6truein \textheight=8truein \textwidth=6truein
\baselineskip=18truept

\def\mapright#1{\ \smash{\mathop{\longrightarrow}\limits^{#1}}\ }
\def\mapleft#1{\smash{\mathop{\longleftarrow}\limits^{#1}}}
\def\mapup#1{\Big\uparrow\rlap{$\vcenter {\hbox {$#1$}}$}}
\def\mapdown#1{\Big\downarrow\rlap{$\vcenter {\hbox {$\ssize{#1}$}}$}}
\def\mapne#1{\nearrow\rlap{$\vcenter {\hbox {$#1$}}$}}
\def\mapse#1{\searrow\rlap{$\vcenter {\hbox {$\ssize{#1}$}}$}}
\def\mapr#1{\smash{\mathop{\rightarrow}\limits^{#1}}}
\def\ss{\smallskip}
\def\vp{v_1^{-1}\pi}
\def\at{{\widetilde\alpha}}
\def\sm{\wedge}
\def\la{\langle}
\def\ra{\rangle}
\def\on{\operatorname}
\def\spin{\on{Spin}}
\def\kbar{{\overline k}}
\def\qed{\quad\rule{8pt}{8pt}\bigskip}
\def\ssize{\scriptstyle}
\def\a{\alpha}
\def\bz{{\Bbb Z}}
\def\im{\on{im}}
\def\ct{\widetilde{C}}
\def\ext{\on{Ext}}
\def\sq{\on{Sq}}
\def\eps{\epsilon}
\def\ar#1{\stackrel {#1}{\rightarrow}}
\def\br{{\bold R}}
\def\bC{{\bold C}}
\def\bA{{\bold A}}
\def\bB{{\bold B}}
\def\bD{{\bold D}}
\def\bh{{\bold H}}
\def\bQ{{\bold Q}}
\def\bP{{\bold P}}
\def\bx{{\bold x}}
\def\bo{{\bold{bo}}}
\def\si{\sigma}
\def\Ebar{{\overline E}}
\def\dbar{{\overline d}}
\def\Sum{\sum}
\def\tfrac{\textstyle\frac}
\def\tb{\textstyle\binom}
\def\Si{\Sigma}
\def\w{\wedge}
\def\equ{\begin{equation}}
\def\b{\beta}
\def\G{\Gamma}
\def\g{\gamma}
\def\k{\kappa}
\def\psit{\widetilde{\Psi}}
\def\tht{\widetilde{\Theta}}
\def\psiu{{\underline{\Psi}}}
\def\thu{{\underline{\Theta}}}
\def\aee{A_{\text{ee}}}
\def\aeo{A_{\text{eo}}}
\def\aoo{A_{\text{oo}}}
\def\aoe{A_{\text{oe}}}
\def\fbar{{\overline f}}
\def\endeq{\end{equation}}
\def\sn{S^{2n+1}}
\def\zp{\bold Z_p}
\def\A{{\cal A}}
\def\P{{\cal P}}
\def\cj{{\cal J}}
\def\zt{{\bold Z}_2}
\def\bs{{\bold s}}
\def\bof{{\bold f}}
\def\bq{{\bold Q}}
\def\be{{\bold e}}
\def\Hom{\on{Hom}}
\def\ker{\on{ker}}
\def\coker{\on{coker}}
\def\da{\downarrow}
\def\colim{\operatornamewithlimits{colim}}
\def\zphat{\bz_2^\wedge}
\def\io{\iota}
\def\Om{\Omega}
\def\u{{\cal U}}
\def\e{{\cal E}}
\def\exp{\on{exp}}
\def\wbar{{\overline w}}
\def\xbar{{\overline x}}
\def\ybar{{\overline y}}
\def\zbar{{\overline z}}
\def\ebar{{\overline e}}
\def\nbar{{\overline n}}
\def\rbar{{\overline r}}
\def\et{{\widetilde E}}
\def\ni{\noindent}
\def\coef{\on{coef}}
\def\den{\on{den}}
\def\lcm{\on{l.c.m.}}
\def\vi{v_1^{-1}}
\def\ot{\otimes}
\def\psibar{{\overline\psi}}
\def\mhat{{\hat m}}
\def\exc{\on{exc}}
\def\ms{\medskip}
\def\ehat{{\hat e}}
\def\etao{{\eta_{\text{od}}}}
\def\etae{{\eta_{\text{ev}}}}
\def\dirlim{\operatornamewithlimits{dirlim}}
\def\gt{\widetilde{L}}
\def\lt{\widetilde{\lambda}}
\def\st{\widetilde{s}}
\def\ft{\widetilde{f}}
\def\sgd{\on{sgd}}
\def\lfl{\lfloor}
\def\rfl{\rfloor}
\def\ord{\on{ord}}
\def\gd{{\on{gd}}}
\def\rk{{{\on{rk}}_2}}
\def\nbar{{\overline{n}}}
\def\lg{{\on{lg}}}
\def\N{{\Bbb N}}
\def\Z{{\Bbb Z}}
\def\Q{{\Bbb Q}}
\def\R{{\Bbb R}}
\def\C{{\Bbb C}}
\def\l{\left}
\def\r{\right}
\def\mo{\on{mod}}
\def\vexp{v_1^{-1}\exp}
\def\notimm{\not\subseteq}
\def\Remark{\noindent{\it  Remark}}

\def\*#1{\mathbf{#1}}
\def\0{$\*0$}
\def\1{$\*1$}
\def\22{$(\*2,\*2)$}
\def\33{$(\*3,\*3)$}
\def\ss{\smallskip}
\def\ssum{\sum\limits}
\def\dsum{{\displaystyle{\sum}}}
\def\la{\langle}
\def\ra{\rangle}
\def\on{\operatorname}
\def\o{\on{o}}
\def\U{\on{U}}
\def\lg{\on{lg}}
\def\a{\alpha}
\def\bz{{\Bbb Z}}
\def\eps{\varepsilon}
\def\br{{\bold R}}
\def\bc{{\bold C}}
\def\bN{{\bold N}}
\def\nut{\widetilde{\nu}}
\def\tfrac{\textstyle\frac}
\def\b{\beta}
\def\G{\Gamma}
\def\g{\gamma}
\def\zt{{\bold Z}_2}
\def\zth{{\bold Z}_2^\wedge}
\def\bs{{\bold s}}
\def\bg{{\bold g}}
\def\bof{{\bold f}}
\def\bq{{\bold Q}}
\def\be{{\bold e}}
\def\lline{\rule{.6in}{.6pt}}
\def\xb{{\overline x}}
\def\xbar{{\overline x}}
\def\ybar{{\overline y}}
\def\zbar{{\overline z}}
\def\ebar{{\overline \be}}
\def\nbar{{\overline n}}
\def\rbar{{\overline r}}
\def\Ubar{{\overline U}}
\def\et{{\widetilde e}}
\def\ni{\noindent}
\def\ms{\medskip}
\def\ehat{{\hat e}}
\def\xhat{{\widehat x}}
\def\nbar{{\overline{n}}}
\def\minp{\min\nolimits'}
\def\N{{\Bbb N}}
\def\Z{{\Bbb Z}}
\def\Q{{\Bbb Q}}
\def\R{{\Bbb R}}
\def\C{{\Bbb C}}
\def\el{\ell}
\def\mo{\on{mod}}
\def\dstyle{\displaystyle}
\def\ds{\dstyle}
\def\Remark{\noindent{\it  Remark}}
\title
{2-adic partial Stirling functions and their zeros}
\author{Donald M. Davis}
\address{Department of Mathematics, Lehigh University\\Bethlehem, PA 18015, USA}
\email{dmd1@lehigh.edu}
\date{February 3, 2014}

\keywords{Stirling number, 2-adic integers}
\thanks {2000 {\it Mathematics Subject Classification}:
11B73, 05A99.}

\maketitle
\begin{abstract} Let $P_n(x)=\frac1{n!}\sum\binom n{2i+1}(2i+1)^x$.
This extends to a continuous function on the 2-adic integers, the $n$th 2-adic
 partial Stirling function. We show that $(-1)^{n+1}P_n$ is the only 2-adically continuous approximation
 to $S(x,n)$, the Stirling number of the second kind.  We present extensive information about the zeros of $P_n$, for which there
 are many interesting patterns. We prove that if $e\ge2$ and
 $2^e+1\le n\le 2^e+4$, then $P_n$ has exactly $2^{e-1}$ zeros, one in each mod $2^{e-1}$
 congruence.
 We study the relationship between the zeros of $P_{2^e+\Delta}$ and $P_\Delta$,
 for $1\le\Delta\le 2^e$, and the convergence of $P_{2^e+\Delta}(x)$
 as $e\to\infty$.
 \end{abstract}

\section{Introduction}\label{intro}

The numbers $$T_n(x):=\sum_{j\text{ odd}}\tbinom njj^x$$
were called partial Stirling numbers in \cite{Lun}, and this terminology (with varying notation) was continued
in \cite{Cl}, \cite{D23}, \cite{partial}, and \cite{You}. Although our results can no doubt be
adapted to odd-primary results, we focus entirely on the prime 2 for simplicity. The 2-exponents $\nu(T_n(x))$
are important in algebraic topology. (\cite{BDSU}, \cite{CK}, \cite{SU2e}, \cite{DS1}, \cite{Lune})
Here and throughout, $\nu(-)$ denotes the exponent of 2 in an integer or rational number or 2-adic integer.

Since the Stirling numbers of the second kind satisfy
$$S(x,n)=\tfrac1{n!}\sum(-1)^{n-j}\tbinom njj^x,\quad x\ge0,$$
it would seem more reasonable to call
\begin{equation}\label{Pdef}P_n(x):={\tfrac1{n!}}\sum_{j\text{ odd}}\tbinom njj^x\end{equation}
the partial Stirling numbers, defined for any integer $x$. Of course, information about either $T_n(x)$ or $P_n(x)$ is easily transformed
into information about the other. We prefer to work with $P_n(x)$ because of its closer relationship with
the Stirling numbers and because of
\begin{prop}\label{P0} For any integer $x$, $\nu(P_n(x))\ge0$ with equality iff $\binom{2x-n-1}{n-1}$ is odd.
\end{prop}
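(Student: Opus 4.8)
The plan is to prove the single congruence
$$T_n(x)\equiv 2^{\nu(n!)}\binom{2x-n-1}{n-1}\pmod{2^{\nu(n!)+1}},\qquad T_n(x):=\sum_{j\text{ odd}}\binom nj j^x=n!\,P_n(x),$$
for every $x\in\Z$. Because $\binom{2x-n-1}{n-1}$ is an ordinary integer, this immediately gives $\nu(P_n(x))=\nu(T_n(x))-\nu(n!)\ge0$, with equality exactly when $\binom{2x-n-1}{n-1}$ is odd, which is the assertion. Both sides of the congruence are periodic functions of $x$ with a common period that is a power of $2$: for odd $j$ the residue $j^x\bmod 2^{\nu(n!)+1}$ depends only on $x$ modulo the exponent of $(\Z/2^{\nu(n!)+1})^\times$, and $\binom{2x-n-1}{n-1}\bmod 2$ depends, by Lucas' theorem, only on finitely many low binary digits of $2x-n-1$. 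Hence it is enough to prove the congruence for all integers $x\ge n$; the few small $n$ for which this reduction is not transparent are checked directly.

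So fix $x\ge n\ge1$. Writing $j^x=\sum_k S(x,k)\,j^{\underline k}$ and using the elementary identities $\sum_j\binom nj j^{\underline k}=n^{\underline k}\,2^{n-k}$ and $\sum_j(-1)^j\binom nj j^{\underline k}=(-1)^n n!\,[\,k=n\,]$ together with $\sum_{j\text{ odd}}a_j=\tfrac12\bigl(\sum_j a_j-\sum_j(-1)^j a_j\bigr)$, one obtains
$$T_n(x)=\sum_{k=1}^{n-1}2^{\,n-k-1}\binom nk k!\,S(x,k)\;+\;\begin{cases}n!\,S(x,n),&n\text{ odd,}\\0,&n\text{ even.}\end{cases}$$
Dividing by $n!$ and using $\nu\!\bigl(2^{\,n-k-1}/(n-k)!\bigr)=\alpha(n-k)-1\ge0$ (Legendre's formula, with $\alpha(\cdot)$ the number of $1$'s in a binary expansion), equality holding iff $n-k$ is a power of $2$, we reduce modulo $2$ to
$$P_n(x)\equiv\sum_{\substack{j\ge0\\ n-2^j\ge1}}S(x,\,n-2^j)\;+\;[\,n\text{ odd}\,]\,S(x,n)\pmod 2.$$

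It remains to identify this sum with $\binom{2x-n-1}{n-1}\bmod 2$. The main tool is the congruence
$$S(x,k)\equiv\binom{x-1-\lfloor k/2\rfloor}{\lceil k/2\rceil-1}\pmod 2\qquad(x\ge k\ge1),$$
which I would prove by induction on $x$ using $S(x,k)\equiv[\,k\text{ odd}\,]S(x-1,k)+S(x-1,k-1)\pmod 2$ (one verifies that the closed form on the right satisfies the same recursion and the base cases $x=k$ and $k=1$). Substituting this into the displayed sum and repeatedly applying the mod-$2$ Vandermonde relation $\binom{a+2^i}{m}\equiv\binom am+\binom a{m-2^i}\pmod 2$, the contributions of all the terms with $j\ge1$ telescope; carrying this out separately in the cases $n$ even and $n$ odd, the ``extra'' terms cancel in pairs and only $S(x,n)\bmod 2=\binom{x-1-\lfloor n/2\rfloor}{\lceil n/2\rceil-1}$ survives. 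A last application of Lucas' theorem — peeling off the least significant binary digit, whose behaviour depends on the parity of $n$ — rewrites this as $\binom{2x-n-1}{n-1}\bmod 2$, and with the periodicity reduction of the first paragraph the proof is complete.

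I expect the telescoping computation of the third paragraph to be the main obstacle: one has to keep careful track of which powers $2^j$ with $n-2^j\ge1$ actually occur (this depends on the binary digits of $n$, and the arithmetic differs for $n$ even and $n$ odd), and check that after the cancellations precisely the $S(x,n)$-term is left. A secondary point is that the closed form for $S(x,k)\bmod 2$ is only valid for $x\ge k$, which is exactly why the periodicity reduction to the range $x\ge n$ was made at the outset.
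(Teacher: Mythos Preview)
Your proof is correct, and the telescoping you worry about does in fact go through cleanly: with $a=x-1-m$ (where $m=\lfloor n/2\rfloor$), each term $\binom{a+2^i}{m-2^i-\varepsilon}\equiv\binom{a}{m-2^i-\varepsilon}+\binom{a}{m-2^{i+1}-\varepsilon}\pmod 2$ collapses the sum to a single surviving binomial coefficient, and the boundary term vanishes because its lower index becomes negative.

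That said, your route and the paper's diverge after the common periodicity reduction to $x\ge n$. The paper simply observes that
\[
(-1)^{n+1}P_n(x)-S(x,n)=\frac{(-1)^{n+1}}{n!}\sum_{j\text{ even}}\binom njj^x
\]
has $2$-exponent $\ge x-\nu(n!)\ge\alpha(n)\ge1$, so $P_n(x)\equiv S(x,n)\pmod 2$ in one line, and then quotes the classical parity formula $S(x,n)\equiv\binom{2x-n-1}{n-1}\pmod 2$ as known. Your expansion $P_n(x)\equiv\sum_{j:\,n-2^j\ge1}S(x,n-2^j)+[n\text{ odd}]S(x,n)\pmod 2$ followed by the telescoping is a roundabout way of re-proving exactly this congruence $P_n(x)\equiv S(x,n)$: the identity your telescoping establishes is precisely what falls out of the even-$j$ observation above. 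After that, your Lucas step together with the formula $S(x,k)\equiv\binom{x-1-\lfloor k/2\rfloor}{\lceil k/2\rceil-1}$ amounts to an independent proof of the Stirling-number parity result that the paper merely cites. So the paper buys brevity by invoking a known fact; your argument buys self-containment at the cost of a combinatorial detour that is, in the end, unnecessary for reaching $P_n(x)\equiv S(x,n)\pmod 2$.
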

\noindent This implies, of course, that $\nu(T_n(x))\ge\nu(n!)$, which is fine, but less elegant. Proposition \ref{P0} follows easily from the known similar result for $S(x,n)$
when $x\ge n$, that $\nu((-1)^{n+1}P_n(x)-S(x,n))\ge x-\nu(n!)$, and periodicity of $P_n$ given in the next proposition, which we will prove in Section \ref{pfsec}.

\begin{prop}\label{per} Let $\lg(n)=[\log_2(n)]$. For all integers $x$, $$P_n(x+2^t)\equiv P_n(x)\mod 2^{t+1-\lg(n)}.$$\end{prop}

An immediate consequence  is
\begin{cor}  $P_n$ extends to a continuous function $\zt\to\zt$, where $\zt$ denotes
the 2-adic integers, with the usual 2-adic metric $d(x,y)=1/2^{\nu(x-y)}$.\end{cor}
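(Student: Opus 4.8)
The plan is to deduce the Corollary directly from the two preceding propositions, using that $\Z$ is dense in the complete metric space $\zt$. First, Proposition \ref{P0} guarantees $\nu(P_n(x))\ge0$ for every integer $x$, so each value $P_n(x)$ already lies in $\zt$; thus \eqref{Pdef} defines a genuine function $P_n\colon\Z\to\zt$, and it suffices to show this function is uniformly continuous for the $2$-adic metric and then to invoke the standard extension theorem for uniformly continuous maps into a complete space.

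Second, I would upgrade Proposition \ref{per}, which controls only a single shift $x\mapsto x+2^t$, to a statement about arbitrary $2$-adically close pairs. If $x\equiv y\mod 2^t$, write $x-y=m2^t$ with $m\in\Z$ and telescope: applying Proposition \ref{per} in turn to $y,\,y+2^t,\dots,\,y+(m-1)2^t$ (and using symmetry of congruence when $m<0$) yields
$$P_n(x)\equiv P_n(y)\mod 2^{t+1-\lg(n)},$$
with \emph{the same} modulus at every step, since the exponent $t+1-\lg(n)$ depends on $t$ alone and not on how far the telescoping has proceeded. Taking $t=\nu(x-y)$ gives $\nu(P_n(x)-P_n(y))\ge\nu(x-y)+1-\lg(n)$, i.e.
$$d(P_n(x),P_n(y))\le 2^{\lg(n)-1}\,d(x,y).$$
Hence $P_n$ is Lipschitz on $\Z$ with constant $2^{\lg(n)-1}$, and in particular uniformly continuous.

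Finally, since $\Z$ is dense in $\zt$ and $\zt$ is complete, this uniformly continuous map extends uniquely to a continuous map $\zt\to\zt$: for $\xi\in\zt$ choose integers $x_k\to\xi$; then $(x_k)$ is Cauchy, so by the Lipschitz bound $(P_n(x_k))$ is Cauchy in the complete space $\zt$ and therefore converges. The limit is independent of the chosen sequence (again by the Lipschitz bound applied to two competing sequences), so it defines $P_n(\xi)$ unambiguously, and continuity of the extension follows because the same estimate $d(P_n(\xi),P_n(\eta))\le 2^{\lg(n)-1}d(\xi,\eta)$ survives passage to the limit.

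I expect the main obstacle to be the middle step: Proposition \ref{per} as stated addresses only the single generator $2^t$ of the congruence, so one must verify that iterating it does not degrade the modulus. Here it does not, because each application loses nothing beyond the fixed factor $2^{\lg(n)-1}$, making the bound uniform in $m$; everything else is the routine density-plus-completeness extension argument, together with the observation from Proposition \ref{P0} that the target can be taken to be $\zt$ rather than all of $\bq$-valued $2$-adics.
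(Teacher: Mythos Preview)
Your proposal is correct and is exactly the argument the paper has in mind: the paper simply calls the corollary ``an immediate consequence'' of Proposition \ref{per}, and your write-up spells out that immediacy (Lipschitz bound from the periodicity estimate, then the standard extension by density and completeness, with Proposition \ref{P0} ensuring the codomain is $\zt$). The only extra detail you supply is the telescoping step to pass from a single $2^t$-shift to an arbitrary pair $x\equiv y\pmod{2^t}$, which is indeed necessary and done correctly since the modulus $2^{t+1-\lg(n)}$ is independent of the intermediate point.
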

\ni This was pointed out by Clarke in \cite{Cl}, where he also noted that the function $P_n$ is  analytic  on $2\zt+\eps$,
$\eps\in\{0,1\}$. We call $P_n$ a {\it partial Stirling function}.

In \cite{D2}, the author proved that there exist 2-adic integers $x_0$ and $x_1$ such that $\nu(P_5(2x))=\nu(x-x_0)$ and $\nu(P_5(2x+1))=\nu(x-x_1)$
for all $x\in\zt$, and in \cite{Cl}, Clarke noted that $2x_0$ and $2x_1+1$ should be thought of as  2-adic zeros of the
function $P_5$, and these are the only two zeros of   $P_5$ on $\zt$. Recently, in \cite{partial}, the author showed that this sort of behavior occurs frequently
for the functions $P_n$ restricted to certain congruence classes. In this paper, we will continue this investigation of the zeros of $P_n$.
Related to this, we will also discuss $\ds\lim_{e\to\infty}P_{2^e+\Delta}(x)$ for fixed $\Delta>0$.

Next we compare with similar notions for the actual Stirling numbers of the second kind. There are results (\cite{Car}, \cite{Kw}) somewhat similar to
our Proposition \ref{per} saying
$$S(x+2^t,n)\equiv S(x,n)\mod2^{\min(t+1-\lg(n),x-\nu(n!))}$$
if $x\ge n$.
Since, if $n<\!<x<\!<t$, \begin{eqnarray*}\nu(S(x+2^t,n)-S(x,n))&=&
\nu\bigl({\tfrac1{n!}}\sum(-1)^{n-j}\tbinom njj^x(j^{2^t}-1)\bigr)\\
&=&\nu(\tfrac1{n!}\tbinom n22^x)=x-1-\nu((n-2)!),\end{eqnarray*}
we conclude that $x\mapsto S(x,n)$ is not continuous in the 2-adic metric on any domain containing arbitrarily large $x$.
Our partial Stirling function $(-1)^{n+1}P_n$ is the  only 2-adically continuous approximation to $S(-,n)$, which is made precise in the following result.
\begin{prop} For all $x\ge n\ge1$, $(-1)^{n+1}P_n(x)\equiv S(x,n)\mod 2^{x-\nu(n!)}$. Moreover $(-1)^{n+1}P_n$ is
 the only continuous function $f:\zt\to\zt$ for which there exists an
integer $c$ satisfying that for all $x\ge n$, $f(x)\equiv S(x,n)\mod 2^{x-c}$.\end{prop}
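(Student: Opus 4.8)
The statement packages two separate claims, which I would prove in turn. For the displayed congruence I would split the defining sum $S(x,n)=\frac1{n!}\sum_{j=0}^n(-1)^{n-j}\binom nj j^x$ according to the parity of $j$. For odd $j$ one has $(-1)^{n-j}=(-1)^{n+1}$, so by \eqref{Pdef} the odd part of the sum is exactly $(-1)^{n+1}P_n(x)$; hence
$$S(x,n)-(-1)^{n+1}P_n(x)=\tfrac1{n!}\sum_{j\text{ even}}(-1)^{n-j}\tbinom nj j^x,$$
and for $x\ge1$ only the terms with $j\ge2$ survive (the $j=0$ term being $0$). Each such $j$ is even, so $\nu(j^x)\ge x$, whence the numerator on the right has $2$-adic valuation $\ge x$, and dividing by $n!$ gives $\nu\bigl(S(x,n)-(-1)^{n+1}P_n(x)\bigr)\ge x-\nu(n!)$. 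This is the first assertion, and it exhibits $(-1)^{n+1}P_n$ as a function of the advertised type, with $c=\nu(n!)$.

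For uniqueness, let $f\colon\zt\to\zt$ be continuous and $c\in\Z$ with $f(x)\equiv S(x,n)\pmod{2^{x-c}}$ for all integers $x\ge n$, and set $g=f-(-1)^{n+1}P_n$. Since $P_n$ maps $\zt$ into $\zt$ (Proposition~\ref{P0} together with continuity), $g\colon\zt\to\zt$ is continuous. Writing $c'=\max(c,\nu(n!))$ and combining the two congruences, I get $\nu(g(x))\ge x-c'$ for every integer $x\ge n$. Now fix $y\in\zt$. For each $M\ge1$ the set of nonnegative integers $x$ with $\nu(x-y)\ge M$ is unbounded — it contains $\{\,y_M+t2^M:t\ge0\,\}$, where $y_M\in\{0,\dots,2^M-1\}$ is the truncation of $y$ — so I may choose an integer $x_M$ in it with $x_M\ge\max(n,M+c')$. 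Then $x_M\to y$ as $M\to\infty$, while $\nu(g(x_M))\ge x_M-c'\ge M\to\infty$ forces $g(x_M)\to0$; continuity of $g$ then yields $g(y)=0$. Since $y$ was arbitrary, $g\equiv0$, i.e.\ $f=(-1)^{n+1}P_n$.

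Neither step is genuinely hard: the congruence is immediate from separating even and odd summands, and the uniqueness is a standard density-and-continuity argument. The one place that deserves attention — and the nearest thing to an obstacle — is that the hypothesis on $f$ is informative only because the modulus $2^{x-c}$ grows with $x$, so one must approximate an arbitrary $2$-adic integer $y$ by honest integers $x_M$ that are simultaneously $2$-adically close to $y$ and large enough that $x_M-c'\to\infty$, and only then invoke continuity of $f$. The valuation estimates themselves are routine.
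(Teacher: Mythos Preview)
Your proof is correct and follows essentially the same approach as the paper: the congruence comes from isolating the even-$j$ summands and using $\nu((2j)^x)\ge x$, and uniqueness is obtained by approximating a given $2$-adic point by large integers and exploiting that the modulus $2^{x-c}$ grows without bound. The only cosmetic difference is that you form $g=f-(-1)^{n+1}P_n$ and argue $g\equiv0$ directly for arbitrary $y\in\zt$, whereas the paper uses a triangle-inequality bound on $d\bigl(f(x+2^L),(-1)^{n+1}P_n(x+2^L)\bigr)$ via $S(x+2^L,n)$ to conclude agreement at positive integers first and then invokes density.
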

\begin{proof} The first part is true since $(-1)^{n+1}P_n(x)-S(x,n)=(-1)^{n+1}\sum\binom n{2j}(2j)^x/n!$. For the second part, we have, for any positive integers $x$ and $L$ with $L$ sufficiently large,
\begin{eqnarray*}&&d(f(x+2^L),(-1)^{n+1}P_n(x+2^L))\\
&\le&d(f(x+2^L),S(x+2^L,n))+d(S(x+2^L,n),(-1)^{n+1}P_n(x+2^L))\\
&\le&1/2^{x+2^L-c}+1/2^{x+2^L-\nu(n!)},\end{eqnarray*}
which approaches 0 as $L\to\infty$. Thus $f(x)=(-1)^{n+1}P_n(x)$ since both functions are continuous. Since positive integers are dense in $\zt$, $f=(-1)^{n+1}P_n$ on $\zt$.\end{proof}

Clarke (\cite{Cl}) conjectured that if, as is often the case, $\nu(P_n(x))=\nu(x-x_0)+c_0$ for some $x_0\in\zt$, $c_0\in\Z$, and all $x$ in a congruence class, then
$\nu(S(x,n))=\nu(x-x_0)+c_0$ on the same congruence class, provided $x\ge n$, and that moreover $\nu(S(x,n))=\nu(P_n(x))$ for all integers $x\ge n$. He pointed out the difficulty
of proving this, which can be thought of as the possibility that $x_0$ might contain  extraordinarily long strings of zeros in its binary expansion.
This will be discussed in  more detail after (\ref{evod8}).

\section{Main theorems}\label{resultssec}
In \cite{partial}, we showed that for $e\ge2$, the functions $P_{2^e+1}$ and $P_{2^e+2}$ have exactly $2^{e-1}$ zeros, one in each mod $2^{e-1}$ congruence class.
One of our main new results is to extend this to $P_{2^e+3}$ and $P_{2^e+4}$. We will prove the following result in Section \ref{pfsec}.
\begin{thm} \label{four} Let $1\le\Delta\le4$, $e\ge2$,  $0\le p<2^{e-1}$, and $p_2$ the mod-2 reduction of $p$.
There exists $x_{e,\Delta,p}\in\zt$ such that for all integers $x$
\begin{equation}\nu(P_{2^e+\Delta}(2^{e-1}x+p))=\nu(x-x_{e,\Delta,p})+\begin{cases}2&\text{if $(\Delta,p_2)=(3,0)$ or $(4,1)$, $e=2$}\\
1&\text{if $(\Delta,p_2)=(3,0)$ or $(4,1)$, $e>2$}\\
0&\text{otherwise.}\end{cases}.\end{equation}
\end{thm}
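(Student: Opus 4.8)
The plan is to analyze the four functions $P_{2^e+\Delta}$, $\Delta\in\{1,2,3,4\}$, by reducing them to already-understood objects together with explicit 2-adic estimates. Since $P_{2^e+1}$ and $P_{2^e+2}$ were handled in \cite{partial}, the real content is $\Delta=3,4$, but it is cleanest to treat all four uniformly. First I would write out a recursion relating $P_{n}$ to $P_{n-1}$: from the binomial identity $\binom{n}{j}=\binom{n-1}{j}+\binom{n-1}{j-1}$ one gets
\begin{equation}
n!\,P_n(x)=\ssum_{j\text{ odd}}\tb{n-1}{j}j^x+\ssum_{j\text{ odd}}\tb{n-1}{j-1}j^x,
\end{equation}
and the second sum, reindexed over even $j-1$, is a ``partial Stirling'' sum over the even residues, which can in turn be rewritten (via $j=2i$, binomially expanding $(2i)^x$ or using known relations between the even-indexed and odd-indexed sums) in terms of $(n-1)!\,P_{n-1}$ and lower-order 2-adic error terms. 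Iterating this four times expresses $(2^e+\Delta)!\,P_{2^e+\Delta}(x)$ as an explicit $\Z$-linear combination of $P_{2^e+\Delta'}(x)$ for smaller $\Delta'$ plus a correction term whose leading 2-adic behavior I can compute by hand.

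Next I would pin down the ``main term'' on each congruence class $2^{e-1}x+p$. The heuristic is that $P_n$ restricted to such a class looks, after dividing out a constant 2-power, like a 2-adically analytic function with a single simple zero; Clarke's observation that $P_n$ is analytic on $2\zt+\eps$ makes this precise, so on each fixed class the function $x\mapsto P_{2^e+\Delta}(2^{e-1}x+p)$ is a convergent power series in $x$ with 2-adic-integer (times a fixed 2-power) coefficients. The statement $\nu(P_{2^e+\Delta}(2^{e-1}x+p))=\nu(x-x_{e,\Delta,p})+c$ is then exactly the assertion that this power series has the form (unit 2-power)$\cdot(x-x_{e,\Delta,p})\cdot(\text{unit power series})$, i.e.\ that the linear coefficient has valuation exactly $c$ and the constant coefficient has valuation $\ge c$ with the quotient of higher terms forcing a unique root by the Weierstrass preparation / Strassmann argument for $\zt[[x]]$. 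So the proof reduces to two numerical claims for each $(\Delta,p_2)$ and each $e$: (i) the linear-in-$x$ coefficient of $P_{2^e+\Delta}(2^{e-1}x+p)$ has valuation exactly $c$ (the case-dependent $0$, $1$, or $2$), and (ii) every coefficient of degree $\ge 2$ has valuation $\ge c+1$, i.e.\ strictly larger, so that Strassmann's theorem yields a unique zero $x_{e,\Delta,p}$ and $\nu$ of the series is governed by its first two terms.

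For (i) and (ii) I would use Proposition~\ref{per}: it says $P_n(x+2^t)\equiv P_n(x)\bmod 2^{t+1-\lg(n)}$, and with $n=2^e+\Delta$ we have $\lg(n)=e$, so shifting by $2^t$ changes $P_n$ by something divisible by $2^{t+1-e}$; taking $t=e-1+k$ shows the degree-$k$ coefficient (in the $2^{e-1}x$ variable) has $\nu\ge k$, which already gives (ii) for $k\ge c+1$ in the generic ($c=0$) case and is the right ballpark in the exceptional cases. Refining this bound to the \emph{exact} valuation of the linear term, and to the strict inequality in the small exceptional range $(\Delta,p_2)\in\{(3,0),(4,1)\}$ where $c=1$ (or $c=2$ when $e=2$), is where the work concentrates: there one needs a sharper version of the periodicity estimate, obtained by expanding $j^{2^t}-1$ for odd $j$ and tracking $\nu\binom{n}{j}$ via Kummer's theorem for the specific $n=2^e+\Delta$, $\Delta\le 4$ (so $n$ has a very sparse binary expansion, $n=2^e+\Delta$ with $\Delta\in\{1,10,11,100\}_2$), which makes the carry combinatorics in Kummer's theorem completely explicit. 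The main obstacle, then, is precisely this: verifying that in the two exceptional families the linear coefficient drops to valuation exactly $1$ (resp.\ $2$ for $e=2$) while all quadratic-and-higher coefficients stay at valuation $\ge 2$ (resp.\ $\ge 3$) — i.e.\ that the ``extra'' 2-divisibility is confined to the linear and constant terms and does not propagate — since a naive application of Proposition~\ref{per} loses exactly the factor of $2$ that distinguishes these cases, and one must instead do the sharpened binomial-coefficient valuation count by hand for each of the small values $\Delta=3,4$ and $e=2$ versus $e>2$ separately.
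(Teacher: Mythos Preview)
Your reduction to a Weierstrass-preparation / Strassmann statement is sound in principle and is close in spirit to what the paper does: the paper's Corollary~\ref{D2cor} says exactly that it suffices to check $\nu(g(0))\ge c$ and $\nu\bigl(g(x+2^d)-g(x)\bigr)=d+c$ for all $x,d$, where $g(x)=P_{2^e+\Delta}(2^{e-1}x+p)$. So the high-level framework matches. The gap is in the computational core, and in two places your proposed tools would not carry the argument.

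First, the claim that Proposition~\ref{per} gives ``the degree-$k$ coefficient has $\nu\ge k$'' is incorrect. Proposition~\ref{per} with $t=e-1+d$ yields only $g(x+2^d)\equiv g(x)\pmod{2^d}$, i.e.\ $g$ is $1$-Lipschitz; this does \emph{not} force $\nu(a_k)\ge k$ for the power-series coefficients (e.g.\ $g(x)=x^2$ is $1$-Lipschitz on $\zt$ but $a_2=1$). So Proposition~\ref{per} alone cannot supply condition~(ii), even in the generic $c=0$ case, and the exceptional cases certainly need something sharper.

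Second, the Pascal recursion and Kummer's theorem are not the right instruments here. Kummer controls $\nu\binom{n}{j}$ term-by-term, but the sum $\sum_{j\text{ odd}}\binom{n}{j}j^x$ has massive cancellation; individual valuations are far too small. What the paper actually uses is the auxiliary function
\[
\Phi_n(s)=\tfrac1{n!}\sum_i\tbinom n{2i+1}(2i)^s
\]
together with sharp valuation statements for it (Propositions~\ref{DSprop}, \ref{qprop1}, \ref{refine}, \ref{stirowr}). The difference $g(x+2^d)-g(x)$ is expanded as $\sum_{k\ge0,\,j>0}T_{k,j}$ with $T_{k,j}=\binom{2^{e-1}x+p}{k}\tfrac1{2^d}\binom{2^{d+e-1}}{j}\Phi_{2^e+\Delta}(j+k)$, and the proof proceeds by locating precisely which pairs $(j,k)$ achieve the minimal $\nu(T_{k,j})$ (Table~\ref{t4} and Claims~\ref{cl1}, \ref{cl2}). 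In the generic cases a single pair $(j,k)=(2^{e-1},0)$ or $(2^{e-1},1)$ dominates; in the exceptional cases $(\Delta,p_2)\in\{(3,0),(4,1)\}$ one must show an \emph{odd} number of pairs hit the minimum, which uses the mod-$4$ information in Propositions~\ref{refine} and~\ref{stirowr}. None of this is accessible via Kummer or the $P_n\to P_{n-1}$ recursion; it genuinely requires the $\Phi_n$ machinery.
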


\begin{cor} \label{2e-1}If $1\le\Delta\le4$ and $e\ge2$, the function $P_{2^e+\Delta}$ has exactly $2^{e-1}$ zeros on $\zt$, given by the 2-adic integers $2^{e-1}x_{e,\Delta,p}+p$
for $0\le p<2^{e-1}$.\end{cor}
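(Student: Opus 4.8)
The plan is to deduce the corollary directly from Theorem~\ref{four}; the only extra ingredients needed are the partition of $\zt$ into its $2^{e-1}$ residue classes modulo $2^{e-1}$ and the tautology that a $2$-adic integer vanishes exactly when its valuation $\nu$ is infinite. Fix $e\ge2$, $1\le\Delta\le4$, and write $n=2^e+\Delta$. Each $y\in\zt$ has a unique representation $y=2^{e-1}x+p$ with $x\in\zt$ and $p\in\{0,1,\dots,2^{e-1}-1\}$, and for each fixed integer $p$ the map $x\mapsto 2^{e-1}x+p$ is a homeomorphism of $\zt$ onto the class $2^{e-1}\zt+p$. Hence it is enough to show that, for each $p$, the function $P_n$ has exactly one zero on $2^{e-1}\zt+p$, and that this zero is $2^{e-1}x_{e,\Delta,p}+p$.

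Fix $p$ and let $c=c(e,\Delta,p)\in\{0,1,2\}$ be the additive constant furnished by Theorem~\ref{four}, so that $\nu\bigl(P_n(2^{e-1}x+p)\bigr)=\nu(x-x_{e,\Delta,p})+c$ for every \emph{integer} $x$. First I would upgrade this identity to all $x\in\zt$. If $x\ne x_{e,\Delta,p}$, set $k=\nu(x-x_{e,\Delta,p})<\infty$ and choose, using density of $\Z$ in $\zt$, an integer $x'$ with $\nu(x'-x)$ large; then $\nu(x'-x_{e,\Delta,p})=k$, so the integer case gives $\nu\bigl(P_n(2^{e-1}x'+p)\bigr)=k+c$, while continuity of $P_n$ (a consequence of Proposition~\ref{per}) makes $\nu\bigl(P_n(2^{e-1}x'+p)-P_n(2^{e-1}x+p)\bigr)$ exceed $k+c$, whence $\nu\bigl(P_n(2^{e-1}x+p)\bigr)=k+c$. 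If instead $x=x_{e,\Delta,p}$, pick integers $x_m\to x_{e,\Delta,p}$; then $\nu\bigl(P_n(2^{e-1}x_m+p)\bigr)=\nu(x_m-x_{e,\Delta,p})+c\to\infty$, so $P_n(2^{e-1}x_m+p)\to 0$, and continuity forces $P_n(2^{e-1}x_{e,\Delta,p}+p)=0$. Thus $\nu\bigl(P_n(2^{e-1}x+p)\bigr)=\nu(x-x_{e,\Delta,p})+c$ holds for all $x\in\zt$, both sides being $\infty$ precisely when $x=x_{e,\Delta,p}$.

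The corollary now follows by counting. Since $c$ is finite, $P_n(y)=0$ for $y=2^{e-1}x+p$ if and only if $\nu(x-x_{e,\Delta,p})=\infty$, i.e.\ $x=x_{e,\Delta,p}$; so $P_n$ has on the class $2^{e-1}\zt+p$ the single zero $2^{e-1}x_{e,\Delta,p}+p$. Letting $p$ run over $0\le p<2^{e-1}$ and using that these classes partition $\zt$, we obtain exactly $2^{e-1}$ zeros of $P_n$ on $\zt$, namely the pairwise distinct $2$-adic integers $2^{e-1}x_{e,\Delta,p}+p$.

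I do not expect a genuine obstacle here: the content is entirely in Theorem~\ref{four}. The one place that requires (routine) care is the passage from ``for all integers $x$'' in that theorem to ``for all $x\in\zt$'', which is handled by continuity of $P_n$ together with the density of $\Z$ in $\zt$; everything else is bookkeeping with the mod-$2^{e-1}$ decomposition and the equivalence ``vanishing $\Leftrightarrow$ infinite valuation''.
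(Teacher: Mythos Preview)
Your proof is correct and follows the paper's intended route: the paper states the corollary immediately after Theorem~\ref{four} with no separate argument, treating it as an immediate consequence, and you have simply written out the details of that deduction (the mod~$2^{e-1}$ partition, the passage from integers to all of $\zt$ via continuity and density, and the equivalence of vanishing with infinite valuation). Nothing more is needed.
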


It is easy to see, as noted in \cite{D2}, that $P_n$ has no zeros if $1\le n\le4$. Corollary \ref{2e-1} says that $P_n$ has 2 (resp.~4) zeros for $5\le n\le8$ (resp.~$9\le n\le12$).
In Section \ref{zerossec}, we discuss patterns in the zeros of $P_n$, extending work in \cite{D2}. We have located all the zeros of $P_n$ for $n\le101$, and present the results
for $n\le64$ in Tables \ref{t2} and \ref{t3}. The number of zeros of $P_n$ appears to equal, with several exceptions,
\begin{equation}2\biggl[\frac{n-1}4\biggr]+\begin{cases}-2&n\equiv13\ (16)\\ 0&\text{otherwise}.\end{cases}\label{numzeros}\end{equation}
For $n\le 101$, the exceptions are that the number of zeros of $P_n$ is 2 less than that given in (\ref{numzeros}) if $n=21$, $71$, or $90$.
This is a tantalizing aspect of this study---patterns appear, leading perhaps to conjectures, but then there are exceptions. The most striking example of this is
that we were conjecturing that if $1\le\Delta\le2^e$, then $P_{2^e+\Delta}$ has exactly one zero in every mod $2^{e-1}$ congruence class that does not contain a
zero of $P_\Delta$. This fails only once for $2^e+\Delta\le101$: for $x\equiv4$ mod 16, $P_{53}$ has three zeros, while $P_{21}$ has none.
The zeros of $P_{2^e+\Delta}$ in mod $2^{e-1}$ congruence classes in which $P_\Delta$ has zeros are somewhat more complicated, although usually $P_{2^e+\Delta}$ has two zeros in such mod $2^{e-1}$ classes. We will discuss this in Section \ref{zerossec}.

Next we describe another approach related to the zeros of $P_{2^e+\Delta}$. We begin with a simple lemma, which was proved in \cite{Monthly}. Let $\U(n)=n/2^{\nu(n)}$ denote the odd part of $n$.
\begin{lem}\label{oddlem} For all $e\ge1$,  $\U(2^{e-1}!)\equiv\U(2^e!)\mod 2^e$.\end{lem}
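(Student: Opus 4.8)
The plan is to reduce the stated congruence to a Wilson‑type fact about the unit group modulo $2^e$. First I would separate the factors of $(2^e)!$ according to parity:
$$(2^e)!=\prod_{i=1}^{2^{e-1}}(2i)\cdot\prod_{i=1}^{2^{e-1}}(2i-1)=2^{2^{e-1}}(2^{e-1})!\cdot\prod_{i=1}^{2^{e-1}}(2i-1).$$
Since the last product is odd, taking odd parts gives $\U\bigl((2^e)!\bigr)=\U\bigl((2^{e-1})!\bigr)\cdot\prod_{i=1}^{2^{e-1}}(2i-1)$, and because $\U\bigl((2^{e-1})!\bigr)$ is odd — hence a unit modulo $2^e$ — the asserted congruence $\U\bigl((2^{e-1})!\bigr)\equiv\U\bigl((2^e)!\bigr)\pmod{2^e}$ is equivalent to
$$\prod_{i=1}^{2^{e-1}}(2i-1)\equiv1\pmod{2^e}.$$

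To establish this I would observe that $\{1,3,\dots,2^e-1\}$ is a complete set of representatives for $(\Z/2^e)^\times$, so the product in question is congruent mod $2^e$ to $\prod_{u\in(\Z/2^e)^\times}u$. Grouping each unit with its inverse, each pair $\{u,u^{-1}\}$ with $u\ne u^{-1}$ contributes a factor $1$, so the product collapses to the product of the self‑inverse units, i.e.\ the solutions of $u^2\equiv1\pmod{2^e}$. For $e\ge3$ these are exactly $\pm1$ and $2^{e-1}\pm1$, whose product is $(-1)(2^{e-1}-1)(2^{e-1}+1)=1-2^{2e-2}\equiv1\pmod{2^e}$, since $2e-2\ge e$; the small cases $e=1,2$ I would dispose of by direct computation. (One could equally well just cite Gauss's generalization of Wilson's theorem, that the product of the units modulo $m$ is $1$ unless $m\in\{1,2,4,p^k,2p^k\}$.)

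The main obstacle is the one genuinely non‑formal input: knowing that $u^2\equiv1\pmod{2^e}$ has exactly the four solutions $\pm1,\,2^{e-1}\pm1$ for $e\ge3$ — equivalently, that the $2$‑torsion of $(\Z/2^e)^\times$ is $(\Z/2)^2$ rather than cyclic. Everything else — the parity split and the inverse‑pairing argument — is routine bookkeeping, so once that structural fact about $(\Z/2^e)^\times$ is invoked the proof should go through without further difficulty.
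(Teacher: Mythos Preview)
Your approach---reducing to the product of odd residues modulo $2^e$ and invoking Gauss's generalization of Wilson's theorem---is the standard one and matches what the paper has in mind (the main text only cites \cite{Monthly}; the leftover material after the bibliography spells out exactly this reduction). Your parity split $(2^e)!=2^{2^{e-1}}(2^{e-1})!\cdot\prod_{j\text{ odd},\,j<2^e}j$ gives the ratio $\U(2^e!)/\U(2^{e-1}!)$ \emph{directly} as the product of the odd integers below $2^e$, whereas the paper's version writes the ratio as $\prod_{2^{e-1}<i\le 2^e}\U(i)$ and then needs an auxiliary lemma to show that these odd parts hit each odd residue class modulo $2^e$ exactly once. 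So your route is slightly cleaner.

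One point you gloss over: you correctly flag that $m=4$ is an exception in Gauss's theorem, but then say the cases $e=1,2$ can be ``disposed of by direct computation.'' In fact for $e=2$ one has $\U(2!)=1$ and $\U(4!)=3$, and $1\not\equiv3\pmod4$, so the lemma as stated actually fails there. This is a slip in the paper's statement rather than in your reasoning; the congruence does hold for all $e\ne2$, which is all that is needed to define the limit $\U(2^\infty!)$.
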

\ni Thus there is a well-defined element $\U(2^\infty!):=\lim U(2^e!)$ in $\zt$. Its backwards binary expansion begins $1101000101101\cdots$.

The following theorem will be proved in Section \ref{limsec}.
\begin{thm}\label{Delthm} For $x\ge0$ and $0\le\Delta<2^e$,
\begin{equation}\label{Peq}P_{2^e+\Delta}(x)\equiv{\tfrac1{\U(2^e!)}\tfrac1{\Delta!}}\sum_{j=0}^{\Delta}\tbinom{\Delta}jj^x\mod 2^{e-\max(\lg(x-\Delta)+1,\lg(\Delta)-1)}.\end{equation}
\end{thm}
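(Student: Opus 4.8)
\medskip\noindent\textbf{Proof proposal.} Put $\Sigma_N(x):=\sum_{j=0}^N\binom Nj j^x$. Both sides of (\ref{Peq}) are $2$-adic integers, so the congruence is vacuous unless $M:=\max(\lg(x-\Delta)+1,\lg\Delta-1)<e$; since $\lg\Delta\le e-1$ this forces $x-\Delta\le2^{e-1}-1$, and in particular $x<2^e+\Delta$, which I assume from now on. Then $S(x,2^e+\Delta)=0$, hence $\sum_j(-1)^j\binom{2^e+\Delta}{j}j^x=0$ and $P_{2^e+\Delta}(x)=\Sigma_{2^e+\Delta}(x)/\bigl(2(2^e+\Delta)!\bigr)$, while the right side of (\ref{Peq}) is $\Sigma_\Delta(x)/\bigl(\U(2^e!)\Delta!\bigr)$. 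Since $(2^e+\Delta)!=2^{2^e-1}\,\U(2^e!)\,\Delta!\,\binom{2^e+\Delta}{2^e}$ with $\binom{2^e+\Delta}{2^e}$ odd, clearing the odd factors and a power of $2$ shows (\ref{Peq}) is equivalent to
\[ \Sigma_{2^e+\Delta}(x)\ \equiv\ 2^{2^e}\tb{2^e+\Delta}{2^e}\,\Sigma_\Delta(x)\ \ \mo\ 2^{\,2^e+e-M+\nu(\Delta!)}. \]

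To prove this, apply the operator identity $(1+E)^N=(2+D)^N$ (with $E$ the shift and $E=1+D$, $D$ the forward difference) to $y\mapsto y^x$ and evaluate at $0$; using $(D^ky^x)(0)=k!\,S(x,k)$ this gives $\Sigma_N(x)=\sum_{k\ge0}\binom Nk2^{N-k}k!\,S(x,k)$, a finite sum. Subtracting term by term, $\Sigma_{2^e+\Delta}(x)-2^{2^e}\binom{2^e+\Delta}{2^e}\Sigma_\Delta(x)=\sum_k 2^{2^e+\Delta-k}k!\,S(x,k)\bigl(\binom{2^e+\Delta}{k}-\binom{2^e+\Delta}{2^e}\binom\Delta k\bigr)$. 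For $0\le k\le\Delta$ the bracket factors as $\binom{2^e+\Delta}{2^e}\binom\Delta k\bigl(\binom{2^e+\Delta-k}{2^e}^{-1}-1\bigr)$ (multiply out, using $\binom{2^e+\Delta}{2^e}\binom\Delta k=\binom{2^e+\Delta}{2^e,\,k,\,\Delta-k}$), which vanishes at $k=\Delta$; so the difference is a sum over $0\le k\le\Delta-1$ of such factored terms plus a sum over $\Delta<k\le x$ of terms $2^{2^e+\Delta-k}k!\,S(x,k)\binom{2^e+\Delta}{k}$.

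It remains to show every summand has $2$-adic valuation $\ge 2^e+e-M+\nu(\Delta!)$. From $\binom{2^e+m}{2^e}=\sum_k\binom{2^e}k\binom mk$ and $\nu\binom{2^e}k=e-\nu(k)\ge e-\lg m$ for $1\le k\le m$ one gets $\nu\bigl(\binom{2^e+m}{2^e}-1\bigr)\ge e-\lg m$; combining this with $\nu(k!)=k-s_2(k)$ and Kummer's $\nu\binom\Delta k=s_2(k)+s_2(\Delta-k)-s_2(\Delta)$ ($s_2$ the binary digit sum), the $k$-th term of the first sum has valuation $2^e+e+\nu(\Delta!)+\nu(S(x,k))+s_2(\Delta-k)-\lg(\Delta-k)$, which is $\ge 2^e+e+\nu(\Delta!)-(\lg\Delta-1)\ge 2^e+e-M+\nu(\Delta!)$ because $s_2(\Delta-k)\ge1$, $\lg(\Delta-k)\le\lg\Delta$ and $M\ge\lg\Delta-1$. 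For the second sum, Kummer gives $\nu\binom{2^e+\Delta}{k}=s_2(k)+s_2(2^e+\Delta-k)-1-s_2(\Delta)$, and since $1\le k-\Delta\le x-\Delta<2^e$ one has $s_2(2^e+\Delta-k)=s_2\bigl(2^e-(k-\Delta)\bigr)=e-s_2(k-\Delta-1)$; so the $k$-th term has valuation $2^e+e+\nu(\Delta!)-1+\nu(S(x,k))-s_2(k-\Delta-1)$, which is $\ge 2^e+e-M+\nu(\Delta!)$ because $\nu(S(x,k))\ge0$ and $s_2(k-\Delta-1)\le\lg(x-\Delta)\le M-1$ (no nonnegative integer $<x-\Delta$ has binary digit sum exceeding $\lg(x-\Delta)$, which uses $M\ge\lg(x-\Delta)+1$); the tightest case $k=x$ uses $S(x,x)=1$.

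The only non-mechanical point is the first reduction: one must subtract off $2^{2^e}\binom{2^e+\Delta}{2^e}\Sigma_\Delta(x)$ — with the odd binomial coefficient present, not merely $2^{2^e}\Sigma_\Delta(x)$ — because that is exactly what makes $\binom{2^e+\Delta}{k}-\binom{2^e+\Delta}{2^e}\binom\Delta k$ factor through $\binom{2^e+\Delta-k}{2^e}^{-1}-1$ and annihilates the $k=\Delta$ term, which would otherwise be far too large. After that the estimates are routine bookkeeping with $\nu$, $\lg$, and $s_2$; and the two quantities defining $M$ are forced by the two sums, $\lg\Delta-1$ by $k$ just below $\Delta$ in the first and $\lg(x-\Delta)+1$ by $k=x$ in the second.
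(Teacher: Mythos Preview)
Your proof is correct and is essentially the same as the paper's. Both arguments expand in Stirling numbers to reach the common intermediate form $P_{2^e+\Delta}(x)=\sum_kS(x,k)\,2^{2^e+\Delta-k-1}/(2^e+\Delta-k)!$, then split into the ranges $k\le\Delta$ and $\Delta<k\le x$; your estimate $\nu\bigl(\binom{2^e+m}{2^e}-1\bigr)\ge e-\lg m$ (via Vandermonde) is exactly the content of the paper's Lemma~\ref{lem1} (proved there via the harmonic sum $\frac11+\cdots+\frac1m$), and your second-sum estimate using $s_2(2^e-(k-\Delta))=e-s_2(k-\Delta-1)$ is exactly Lemma~\ref{lem2}. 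The only cosmetic difference is that you first pass to $\Sigma_N(x)$ and subtract $2^{2^e}\binom{2^e+\Delta}{2^e}\Sigma_\Delta(x)$ in one stroke, whereas the paper makes the two approximations successively; term by term the bounds coincide.
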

\noindent Here we ignore $\lg(x-\Delta)$ if $x-\Delta\le0$ (or call it $-\infty$).

This has as an immediate corollary that the 2-adic limit of (\ref{Peq}), as $e\to\infty$, equals the RHS of the following:
\begin{equation}\label{infeq}P_{2^\infty+\Delta}(x):=\lim_{e\to\infty}P_{2^e+\Delta}(x)={\tfrac1{\U(2^\infty!)}\tfrac1{\Delta!}}\sum_{j=0}^{\Delta}\tbinom{\Delta}jj^x,\quad x\ge0.\end{equation}
The novelty here is that we have defined  a function, at least for positive integers $x$, of the form
$P_{2^\infty+\Delta}(x)$
and then related it to the finite sum $\ds\sum_{j=0}^\Delta\tbinom\Delta jj^x$.

We now explain the relevance of (\ref{infeq}) to the zeros of $P_n$.
Note that the RHS of (\ref{infeq}) is a sum over all $j$, not just odd $j$. Since $S(x,n)=0$ when $x<n$ (and $S(x,n)$ is the difference of the sum over
odd $j$ and sum over even $j$), we have
\begin{equation}{\tfrac1{n!}}\sum_{j=0}^n\tbinom njj^x={\tfrac2{n!}}\sum_{j\text{ odd}}\tbinom njj^x\quad \text{if }0\le x<n.\label{2t}\end{equation}
On the other hand, if $x\ge n$, then
\begin{equation}\label{evod}{\tfrac1{n!}}\sum_{j\text{ odd}}{\tbinom njj^x\equiv \tfrac1{n!}}\sum_{j=0}^n\tbinom njj^x\mod 2^{x-\nu(n!)}.\end{equation}
It is likely that, as a consequence of (\ref{evod}), we have
\begin{equation}\label{evod8}\nu({\tfrac1{\Delta!}}\sum_{j=0}^\Delta\tbinom\Delta jj^x)=\nu({\tfrac1{\Delta!}}\sum_{j\text{ odd}}\tbinom\Delta jj^x)=\nu(P_\Delta(x))=\nu(x-x_0)+c_0\quad\text{if }x>\!>\Delta\end{equation}
for $x$ in a congruence class for which the last equality holds for some $x_0\in\zt$.
That it is only ``likely" is due to the possibility that  it might conceivably happen that the zero $x_0$ of $P_\Delta$  satisfies that \begin{equation}\label{name}\nu(x_0-A)\ge A\end{equation} for some large integer $A$. This refers to a long string of
zeros in the binary expansion of $x_0$ mentioned at the end of Section \ref{intro}. Then the inequality
$$\nu({\tfrac1{\Delta!}}\sum_{j=0}^\Delta\tbinom\Delta jj^A-P_\Delta(A))\ge A-\nu(\Delta!)$$
implied by (\ref{evod})
 would not be sufficient to deduce from $\nu(P_\Delta(A))=\nu(A-x_0)+c_0$ that $\nu({\tfrac1{\Delta!}}\sum_{j=0}^\Delta\tbinom\Delta jj^A)=\nu(A-x_0)+c_0$, as desired.
The situation (\ref{name}) would have to happen infinitely often in $x_0$ to create a real problem.

Assuming (\ref{evod8}), it would follow from  (\ref{infeq})
 that the zeros of $P_{2^\infty+\Delta}$ are exactly those of $P_\Delta$.
 Unfortunately, this does not give information about the zeros of $P_{2^e+\Delta}$, since the convergence in (\ref{infeq}) is not uniform.
 Nevertheless, it is interesting that for all positive integers $x$, the sequence $P_{2^e+\Delta}(x)$ converges in $\zt$ as $e\to\infty$.
 This leads one to wonder whether the same thing is true if $x$ is in $\zt-\Z^+$. Quite possibly, the answer is  that a variant of this is true iff $x$ is rational.

Our investigation of this has been focused primarily on the case $\Delta=1$, but we anticipate similar results for any $\Delta>0$.
Our main conjecture here is as follows. Throughout the following, if $x\in\zt$, we let $x_i$ denote the $2^i$-bit of $x$; i.e. $x=\sum_{i\ge0}x_i2^i$
with $x_i\in\{0,1\}$.
\begin{conj}\label{dconj} If, for some $d\ge2$ and $i_0\ge 0$,  $x\in\zt$ satisfies $x_{i+d}=x_i$ for all $i\ge i_0$, then for any $e$, $\ds\lim_{j\to\infty}P_{2^{
e+dj}+1}(x)$ exists in $\zt$.\end{conj}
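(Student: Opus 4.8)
The plan is to reduce Conjecture \ref{dconj} to a statement about consecutive differences. Fix $e$ and set $n_j=2^{e+dj}+1$. Because $\zt$ is complete and ultrametric, $\{P_{n_j}(x)\}_j$ converges as soon as $\nu\bigl(P_{n_{j+1}}(x)-P_{n_j}(x)\bigr)\to\infty$; writing $E=e+dj$, this is the assertion that $\nu\bigl(P_{2^{E+d}+1}(x)-P_{2^E+1}(x)\bigr)\to\infty$ as $E\to\infty$ through the progression $e,e+d,e+2d,\dots$, for every $x$ whose binary expansion has tail period $d$. The raw ingredients I would try to assemble are Theorem \ref{Delthm} (whose right side for $\Delta=1$ is the $x$-independent constant $1/\U(2^E!)$), the congruence $\U(2^E!)\equiv\U(2^{E+d}!)\bmod 2^{E+1}$ obtained by iterating Lemma \ref{oddlem}, Proposition \ref{per}, and the observation that an $x$ with tail period $d$ is exactly a rational 2-adic integer whose denominator divides $2^d-1$: writing $x=x'+2^{i_0}\bar x$ with $\bar x$ purely $d$-periodic gives $\bar x=-b/(2^d-1)$ for the $d$-bit period block $b$.

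The obvious attack — choose a nonnegative integer $a$ with $\nu(x-a)$ large, move from $P_{n_j}(x)$ to $P_{n_j}(a)$ by Proposition \ref{per}, and then evaluate at $a$ using Theorem \ref{Delthm} and Lemma \ref{oddlem} — provably cannot succeed, and seeing why isolates the difficulty. Applying Theorem \ref{Delthm} at $a$ costs the error $2^{E-\lg(a-1)-1}$, which forces $\lg(a)$ small; but Proposition \ref{per} passes from $x$ to $a$ only modulo $2^{\nu(x-a)+1-\lg(n_{j+1})}$, which forces $\nu(x-a)\gtrsim E+d$. When $x$ genuinely has tail period $d$ — the case not already covered by Theorem \ref{Delthm} — any such integer $a$ necessarily has $\lg(a)\ge\nu(x-a)-d$, so the two demands are incompatible. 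Thus any proof must avoid replacing $x$ by an integer, or must sharpen Theorem \ref{Delthm} so that its error term reflects the arithmetic of $x$ — its period, or the odd denominator $v\mid 2^d-1$ — rather than its 2-adic digit length.

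I would therefore work with the identity $P_n(x)=\sum_{j\text{ odd}}j^x/\bigl(j!\,(n-j)!\bigr)$, valid in $\Q_2$ (hence in $\zt$, since $P_n$ is continuous and $\zt$-valued) for every $x\in\zt$, and compare the $n=2^E+1$ and $n=2^{E+d}+1$ expressions directly, using $j^x=j^{x'}\bigl(j^{2^{i_0}}\bigr)^{\bar x}$ together with the fact that $j^{2^{E}}$ is very close to $1$ for odd $j$. The aim is to exploit the resonance between the tail period $d$ of $x$ and the jump $2^{E+d}-2^{E}=2^E(2^d-1)$ in the exponent to produce a self-similar congruence $P_{2^{E+d}+1}(x)\equiv P_{2^E+1}(x)\bmod 2^{g(E)}$ with $g(E)\to\infty$; equivalently, a version of Theorem \ref{Delthm} uniform in rational $x$ with bounded odd denominator. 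The main obstacle — and the reason this remains a conjecture — is precisely the pathology flagged around (\ref{name}): controlling $g(E)$ amounts to ruling out anomalously long strings of zeros in the relevant 2-adic quantities ($\U(2^\infty!)$, the numbers $j^x-1$ for the few odd $j$ that dominate, or an internal ``zero'' playing the role of $x_0$), and no mechanism is known that forbids such resonances from recurring infinitely often. A sensible first target is $d=2$, or more generally small $v$, where only a handful of odd residues contribute and the estimate might be pushed through by hand, giving a testbed for any proposed sharpening of Theorem \ref{Delthm}.
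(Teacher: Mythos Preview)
Neither you nor the paper gives an unconditional proof of Conjecture~\ref{dconj}; in the paper it remains a conjecture. What the paper \emph{does} prove is a clean reduction: it formulates the more concrete Conjecture~\ref{specconj} (a bound $\nu\bigl(P_{2^{e+d}+1}(a+1)-P_{2^e+1}(a+1)\bigr)\ge e-i_0$ for \emph{finite} integers $a$ with enough $d$-periodic tail bits) and shows that \ref{specconj} $\Rightarrow$ \ref{dconj}. The reduction is exactly the truncation-and-triangle argument you sketched and then rejected: with $x[n]=\sum_{i\le n}x_i2^i$ and $Q_e=P_{2^e+1}$,
\[
\nu\bigl(Q_{e+d}(x)-Q_e(x)\bigr)\ \ge\ \min\bigl(\nu(Q_{e+d}(x)-Q_{e+d}(x[n])),\ \nu(Q_{e+d}(x[n])-Q_e(x[n])),\ \nu(Q_e(x[n])-Q_e(x))\bigr),
\]
bounding the outer two by Proposition~\ref{per} (giving $n+2-e-d$ and $n+2-e$) and the middle one by Conjecture~\ref{specconj} (giving $e-i_0$). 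Since the middle bound does not depend on $n$, one may take $n$ large, and the minimum is $\ge e-i_0$.

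Your dismissal of the truncation route is premature because you tried the wrong tool for the middle term. You reached for Theorem~\ref{Delthm}, whose error $2^{E-\lg(a-1)-1}$ degrades with $\lg(a)$, creating the tension with Proposition~\ref{per} that you correctly diagnosed. The paper sidesteps this by not evaluating $Q_e(x[n])$ and $Q_{e+d}(x[n])$ separately at all: it only compares them, and the conjectured bound for that \emph{difference} depends on $e$ and $i_0$, not on the size of $x[n]$. You in fact gesture at exactly this when you ask for ``a version of Theorem~\ref{Delthm} uniform in rational $x$ with bounded odd denominator'' and a ``self-similar congruence $P_{2^{E+d}+1}(x)\equiv P_{2^E+1}(x)\bmod 2^{g(E)}$ with $g(E)\to\infty$''; Conjecture~\ref{specconj} is the precise, finite-integer incarnation of that wish (with $g(E)=E-i_0$), and once stated the implication to \ref{dconj} is three lines. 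What your write-up is missing, then, is not an idea but the crystallization: formulate the middle-term bound as a standalone conjecture about finite periodic integers and observe that the reduction then goes through.
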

\ni That is, if $x$ is a 2-adic integer with eventual period $d$ in its binary expansion, then the sequence of $P_{2^e+1}(x)$ as $e\to\infty$ splits into $d$ convergent subsequences.

Table \ref{t1} illustrates this phenomenon. Here we  deal with $z_n:=3\cdot\sum_{i=0}^n2^{3i}$ and tabulate the backwards binary expansion
of $P_{2^e+1}(z_n)$ for $n\ge n_0$, as listed. We list $\nu(P_{2^e+1}(z_n)-P_{2^{e-3}+1}(z_n))$ for emphasis, although these values are clear from comparison
of the 12-bit expansions.

\vfill\eject

\begin{diag}{$P_{2^e+1}(z_n)$ for $n\ge n_0$}\label{t1}
\begin{center}
\begin{tabular}{c|lcc}
$e$&$P_{2^e+1}(z_n)$&$n_0$&$\nu(P_{2^e+1}(z_n)-P_{2^{e-3}+1}(z_n))$\\
\hline
$4$&$011011101000\cdots$&$4$&\\
$5$&$001110101111\cdots$&$4$&\\
$6$&$111101001101\cdots$&$5$&\\
$7$&$011001110011\cdots$&$5$&$4$\\
$8$&$001110110100\cdots$&$6$&$7$\\
$9$&$111101100101\cdots$&$6$&$6$\\
$10$&$011001111001\cdots$&$6$&$8$\\
$11$&$001110110110\cdots$&$7$&$10$\\
$12$&$111101100011\cdots$&$7$&$9$\\
$13$&$011001111000\cdots$&$7$&$11$\\
$14$&$001110110110\cdots$&$8$&$12$\\
$15$&$111101100011\cdots$&$8$&$12$
\end{tabular}
\end{center}
\end{diag}

We now state a more detailed conjecture which implies Conjecture \ref{dconj}.
\begin{conj}\label{specconj} Suppose $x$ is a finite element of $\zt$, and $i_0$ and $d$ are positive integers such
that $x_{i_0}=0$ and $x_{i+d}=x_i$ for all $i\ge i_0$, provided $2^{i+d}\le x$. Denote by $R(x):=\lg(x)+1-(i_0+d)$ the number of repeating bits of $x$.
Then
$$\nu(P_{2^{e+d}+1}(x+1)-P_{2^e+1}(x+1))\ge e-i_0.$$
provided $R(x)\ge2(e-i_0)-1$.
\end{conj}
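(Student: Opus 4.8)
The plan is to reduce the difference $P_{2^{e+d}+1}(x+1)-P_{2^e+1}(x+1)$ to an arithmetic statement about $j^{x+1}$ for odd $j$, using the defining formula (\ref{Pdef}) together with the factorial comparison of Lemma \ref{oddlem}. Write $n=2^e+1$ and $n'=2^{e+d}+1$. From (\ref{Pdef}), $n!\,P_n(x+1)=\sum_{j\text{ odd}}\binom nj j^{x+1}$, and similarly for $n'$. The first reduction is to replace $n!$ and $n'!$ by their odd parts: by Lemma \ref{oddlem} (iterated) one has $\U(2^{e+d}!)\equiv\U(2^e!)\bmod 2^e$, and $\nu(n!)=\nu(n'!)=\nu((2^e+1)!)$ is small compared with $e-i_0$ under the hypothesis $R(x)\ge 2(e-i_0)-1$, so the passage to odd parts costs only a controlled amount of 2-adic precision. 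Thus it suffices to compare $\tfrac1{\U(2^e!)}\sum_{j\text{ odd}}\binom{2^e+1}j j^{x+1}$ with $\tfrac1{\U(2^{e+d}!)}\sum_{j\text{ odd}}\binom{2^{e+d}+1}j j^{x+1}$ modulo a suitable power of $2$.

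The second step is the heart of the matter: showing that $\sum_{j\text{ odd}}\binom{2^{e+d}+1}{j}j^{x+1}$ and $\sum_{j\text{ odd}}\binom{2^{e}+1}{j}j^{x+1}$ agree to high 2-adic order, where the order is governed by the period structure of $x$. Here I would exploit that $\binom{2^e+1}{j}\equiv\binom{2^e+1}{2^e+1-j}$ and that $\binom{2^e+1}{j}$ is odd only for $j$ of the form $2^a$ or $2^e+1-2^a$; more generally one analyzes $\nu(\binom{2^e+1}{j})$ via Kummer's theorem (carries in the base-2 addition $j+(2^e+1-j)$). The key identity to isolate is that $j\mapsto j^{x+1}$, for $j$ odd, depends on $x+1$ only through its residue modulo $2^{\lg(j)+\text{something}}$ — indeed $j^{2^t}\equiv 1\bmod 2^{t+2}$ for odd $j$ — so the periodicity $x_{i+d}=x_i$ for $i\ge i_0$ translates, after splitting $x=\sum_{i<i_0}x_i2^i+\sum_{i\ge i_0}x_i2^i$, into a near-periodicity of the exponent that makes the two sums coincide to order roughly $e-i_0$ once $R(x)$ (the number of genuinely repeating bits) is at least $2(e-i_0)-1$. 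The factor $2$ in $2(e-i_0)-1$ should come from needing the repeat to persist across a range of exponents comparable to $e$ itself, since the binomial coefficients $\binom{2^e+1}{j}$ contribute terms $j$ as large as $2^e$, and $j^{x+1}\bmod 2^{e-i_0}$ requires control of $x+1\bmod 2^{\,\approx e}$.

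I expect the main obstacle to be the bookkeeping in the second step: tracking simultaneously (i) the 2-adic valuations $\nu(\binom{2^e+1}{j})$ as $j$ ranges over odd integers up to $2^e$, and (ii) the dependence of $j^{x+1}\bmod 2^{N}$ on which block of bits of $x$ one is in, and then combining these so that the periodicity hypothesis yields exactly the bound $e-i_0$ and not something weaker. A clean way to organize this is to write $x+1 = y + 2^{i_0}(1+2^d+2^{2d}+\cdots)w + (\text{top, aperiodic part})$ with $y<2^{i_0}$, use $j^{2^{i_0}(1+2^d+\cdots)w}\equiv (j^{2^{i_0}})^{\,\text{geometric}}$ and the fact that $j^{2^{i_0}}\equiv 1+2^{i_0+\eps}u_j\bmod 2^{i_0+\eps+\cdots}$ for odd $j$, reducing powers to linear-in-$u_j$ expressions; then the difference between the $n$-sum and the $n'$-sum telescopes. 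Establishing Proposition \ref{per}-style periodicity estimates uniformly in this telescoping — and checking that the ``top aperiodic part'' of $x$, which has length $i_0+d$ fixed, contributes an error absorbed into $2^{e-i_0}$ precisely when $R(x)\ge 2(e-i_0)-1$ — is the delicate point; everything else is a routine manipulation of binomial coefficients and the elementary congruences for odd powers.
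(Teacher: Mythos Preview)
The statement you are attempting to prove is labeled a \emph{Conjecture} in the paper, and the paper does \emph{not} prove it. After stating Conjecture~\ref{specconj}, the paper only (i) shows that Conjecture~\ref{specconj} implies Conjecture~\ref{dconj}, and (ii) reports that Conjecture~\ref{specconj} has been checked by computer for $i_0=5$, $2\le d\le 7$, $6\le e\le 9$, and many residues of $x\bmod 2^{i_0}$. There is therefore no proof in the paper to compare against; this is an open problem.

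As for your plan itself, it is a heuristic outline rather than a proof, and it contains a concrete error at the outset: you assert that $\nu(n!)=\nu(n'!)$ for $n=2^e+1$ and $n'=2^{e+d}+1$ and that this quantity is ``small compared with $e-i_0$'', but in fact $\nu((2^e+1)!)=2^e-1$ while $\nu((2^{e+d}+1)!)=2^{e+d}-1$, which are neither equal nor small. More to the point, the heart of the matter is exactly your ``second step'', and you say so yourself (``I expect the main obstacle to be the bookkeeping in the second step''; ``is the delicate point''). The periodicity of the binary expansion of $x$ certainly interacts with the $2$-adic behavior of $j^{x+1}$, but converting that interaction into the precise bound $e-i_0$ under the precise hypothesis $R(x)\ge 2(e-i_0)-1$ is the entire content of the conjecture. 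Your sketch restates the difficulty in different language and labels the crux ``delicate'' without supplying the missing mechanism; in particular, nothing in the outline explains why the threshold should be $2(e-i_0)-1$ rather than, say, $e-i_0$ or $3(e-i_0)$. The conjecture remains open, and your proposal does not close the gap.
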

\begin{proof}[Proof that Conjecture \ref{specconj} implies Conjecture \ref{dconj}.] Let $x$ be as in Conjecture \ref{dconj}. Let $x[n]:=\sum_{i\le n}x_i2^i$, and let $Q_e:=P_{2^e+1}$. We have
\begin{eqnarray*}&&\nu(Q_{e+d}(x)-Q_e(x))\\
&\ge&\min\bigl(\nu(Q_{e+d}(x)-Q_{e+d}(x[n])),\nu(Q_{e+d}(x[n])-Q_e(x[n])),\nu(Q_e(x[n])-Q_e(x))\bigr)\\
&\ge&\min(n+2-e-d,e-i_0,n+2-e),\end{eqnarray*}
provided $n-d\ge 2e-i_0-2$,
using Proposition \ref{per} for the first and last parts. For any $e$, we can make this $\ge e-i_0$ by choosing $n$ sufficiently large.
Thus the sequence  $\la Q_{e+jd}(x)\ra$ is Cauchy.\end{proof}

Conjecture \ref{specconj} has been verified for $i_0=5$, $2\le d\le 7$, $6\le e\le 9$, and many values of $x$ mod $2^{i_0}$.

\section{Zeros of $P_n$}\label{zerossec}
In this section, we describe various facts about the zeros of the functions $P_n$. Most of these  can be considered to be extensions
of results of \cite{D2}, but the emphasis here is on the zeros rather than divisibility.

We begin with a broad outline of our proofs,
but defer most details to the following section.
This outline is needed to understand certain aspects of our tabulated results.

One of our main tools is the following result, which is a slight refinement of \cite[Theorem 1]{D2}.
Here we use the notation that $\min'(a,b)=\min(a,b)$ if $a\ne b$, while $\min'(a,a)>a$.
\begin{lem} $($\cite{D2}$)$ A function $f:\Z\to\Z\cup\{\infty\}$ satisfies that there exists $z_0\in\zt$ such that
$f(x)=\nu(x-z_0)$ for all integers $x$ iff $f(0)\ge0$ and for all $x\in\Z$ and all $d\ge0$,
$$f(x+2^d)={\min}'(f(x),d).$$\label{D2lem}\end{lem}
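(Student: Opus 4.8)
The plan is to prove the two implications separately. The forward one is immediate from the ultrametric property of $\nu$: if $f(x)=\nu(x-z_0)$ for some $z_0\in\zt$, then $f(0)=\nu(z_0)\ge0$ because $z_0$ is a $2$-adic integer, and, writing $a=x-z_0$, one has $f(x+2^d)=\nu(a+2^d)$, which equals $\min(\nu(a),d)$ when $\nu(a)\ne d$ and is $>d$ (possibly $\infty$) when $\nu(a)=d$; this is exactly $\min'(f(x),d)$. So the real content is the converse, which I would prove by constructing $z_0$ as a $2$-adic limit.

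Assume $f(0)\ge0$ and $f(x+2^d)=\min'(f(x),d)$ for all $x\in\Z$ and $d\ge0$. First I would record the ``backwards'' form $f(x)=\min'(f(x-2^d),d)$, obtained by replacing $x$ with $x-2^d$. The key lemma is a propagation statement: for every $m\ge0$, if $f(x)\ge m$ and $y\equiv x\pmod{2^m}$, then $f(y)\ge m$. To prove it one writes $y-x$ (which is divisible by $2^m$) as a finite signed sum of powers $2^d$ with $d\ge m$ --- read off from the binary expansion of $|y-x|$, with an overall sign if $y<x$ --- and moves from $x$ to $y$ by adding or subtracting these powers one at a time; a short case split at each stage on whether the $f$-value of the current integer equals the exponent $d$ being added shows that the forward hypothesis (for additions) and its backwards form (for subtractions) keep the $f$-value $\ge m$. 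Consequently, for every $m\ge0$ the set $S_m:=\{x\in\Z:f(x)\ge m\}$ is a union of residue classes mod $2^m$; taking $m=0$ (with $f(0)\ge0$) gives $S_0=\Z$.

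Next I would induct on $k\ge0$ to show that $S_k$ is a single residue class mod $2^k$, say $S_k=\{x:x\equiv r_k\pmod{2^k}\}$ with $r_k\in\{0,\dots,2^k-1\}$; the base case $k=0$ is $S_0=\Z$. For the step, $S_{k+1}\subseteq S_k$ shows $S_{k+1}$ is contained in the union of the two classes $r_k$ and $r_k+2^k$ mod $2^{k+1}$. Since $r_k\in S_k$ we have $f(r_k)\ge k$, and $f(r_k+2^k)=\min'(f(r_k),k)$ is $>k$ if $f(r_k)=k$ and equals $k$ if $f(r_k)>k$; hence exactly one of $r_k,r_k+2^k$ lies in $S_{k+1}$, and since $S_{k+1}$ is a union of classes mod $2^{k+1}$ lying within those two, it equals the class of that representative. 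Let $r_{k+1}\in\{0,\dots,2^{k+1}-1\}$ be it. Since $r_{k+1}\equiv r_k\pmod{2^k}$ for all $k$, the $r_k$ form a Cauchy sequence with a limit $z_0\in\zt$ satisfying $z_0\equiv r_k\pmod{2^k}$ for every $k$. Then for each integer $x$ and each $k\ge0$, $f(x)\ge k\iff x\in S_k\iff x\equiv r_k\pmod{2^k}\iff\nu(x-z_0)\ge k$, and letting $k$ range over all nonnegative integers gives $f(x)=\nu(x-z_0)$ (with $f(x)=\infty$ forcing $x=z_0$), completing the proof.

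The step I expect to be the main obstacle is the propagation lemma. It is conceptually simple, but it needs care to handle subtractions of powers of $2$ --- which is why the backwards form of the hypothesis is needed --- and negative differences $y-x$, and one must carry the ``$\min'$'' convention correctly through every case of the step-by-step argument. Once that lemma is established, the inductive identification of the sets $S_k$ and the passage to the $2$-adic limit are routine.
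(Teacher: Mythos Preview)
Your proof is correct. The paper does not prove this lemma in full but refers to \cite{D2}, noting only that $z_0$ is built greedily from its binary digits via $e_0=f(0)$ and $e_i=f(2^{e_0}+\cdots+2^{e_{i-1}})$. Your level-set construction is the same idea in different clothing: your representatives $r_k$ satisfy $r_{k+1}=r_k$ when $f(r_k)>k$ and $r_{k+1}=r_k+2^k$ when $f(r_k)=k$, so a new bit is added precisely at positions $k=e_0,e_1,\ldots$, and the partial sums $2^{e_0}+\cdots+2^{e_{i-1}}$ of the paper coincide with your $r_{e_i}$. The main difference is presentational: you make explicit the propagation lemma (that $S_m$ is a full residue class mod $2^m$), which is what justifies passing from the single sequence of partial sums to the statement $f(x)=\nu(x-z_0)$ for \emph{all} integers $x$; the paper leaves this verification to \cite{D2}. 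Your version is more self-contained and arguably cleaner, at the cost of the extra step-by-step argument for propagation.
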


The difference between this and the result of \cite{D2} is that here we do not assume at the outset that $f(x)\ge0$ for all $x$. As can be seen from the
proof in \cite{D2}, all that is required is $f(0)\ge0$ since $z_0=2^{e_0}+2^{e_1}+\cdots$ with $e_0<e_1<\cdots$ and $e_0=f(0)$, $e_i=f(2^{e_0}+\cdots+2^{e_{i-1}})$.

\begin{cor} \label{D2cor} If $g:\Z\to\Q\cup\{\infty\}$ satisfies that there exists an integer $c$ such that $\nu(g(0))\ge c$ and, for all integers $x$ and $d$ with $d\ge0$,
$$\nu(g(x+2^d)-g(x))=d+c,$$
then there exists $z_0\in\zt$ such that, for all $x\in\Z$,
$$\nu(g(x))=\nu(x-z_0)+c.$$
\end{cor}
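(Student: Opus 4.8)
The goal is to deduce Corollary \ref{D2cor} from Lemma \ref{D2lem} by the obvious change of variables, namely by setting $f(x)=\nu(g(x))-c$ and checking that the hypotheses of the lemma are met.

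First I would define $f:\Z\to\Z\cup\{\infty\}$ by $f(x)=\nu(g(x))-c$. The hypothesis $\nu(g(0))\ge c$ gives $f(0)\ge0$ immediately. The main point is to verify the recursion $f(x+2^d)=\min'(f(x),d)$ for all $x\in\Z$ and all $d\ge0$. Rewriting in terms of $g$, this says $\nu(g(x+2^d))=\min'(\nu(g(x)),d+c)$, i.e. that $\nu(g(x+2^d))=\min(\nu(g(x)),d+c)$ when $\nu(g(x))\ne d+c$, and $\nu(g(x+2^d))>d+c$ when $\nu(g(x))=d+c$. Both assertions follow from the hypothesis $\nu(g(x+2^d)-g(x))=d+c$ together with the ultrametric inequality: writing $g(x+2^d)=g(x)+(g(x+2^d)-g(x))$, if the two summands have different valuations then $\nu(g(x+2^d))$ equals the smaller of them, which is exactly $\min(\nu(g(x)),d+c)$ when $\nu(g(x))\ne d+c$; and if $\nu(g(x))=d+c=\nu(g(x+2^d)-g(x))$, then $\nu(g(x+2^d))\ge d+c$, with the possibility of strict inequality — which is precisely the $\min'$ convention. (There is one edge case: if $g(x)=0$, then $\nu(g(x))=\infty\ne d+c$, and $g(x+2^d)=g(x+2^d)-g(x)$ has valuation exactly $d+c=\min(\infty,d+c)$, consistent with the formula; the value $f(x)=\infty$ is permitted in the lemma.)

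Having checked the hypotheses, Lemma \ref{D2lem} furnishes $z_0\in\zt$ with $f(x)=\nu(x-z_0)$ for all $x\in\Z$, and unwinding the definition of $f$ gives $\nu(g(x))=\nu(x-z_0)+c$ for all $x\in\Z$, as claimed.

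I expect no serious obstacle here: the entire content is the translation between the additive recursion on $f$ and the multiplicative-difference hypothesis on $g$, which is a one-line application of the non-archimedean triangle inequality, plus bookkeeping of the $\min'$ convention in the equality case. The only thing to be careful about is the $\min'$ versus $\min$ distinction and the possibility that $g$ vanishes at some integers (so that $f$ takes the value $\infty$), but the statement of Lemma \ref{D2lem} already accommodates $\infty$ as a value, so this causes no trouble.
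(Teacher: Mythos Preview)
Your argument is correct and is exactly the paper's approach: define $f(x)=\nu(g(x))-c$, observe that the hypothesis gives $\nu(g(x+2^d))=\min'(\nu(g(x)),d+c)$, and invoke Lemma~\ref{D2lem}. One small sharpening of your phrasing: in the equal-valuation case you actually get the \emph{strict} inequality $\nu(g(x+2^d))>d+c$ (since a sum of two $2$-adic numbers of the same valuation always has strictly larger valuation), which is what $\min'(a,a)>a$ requires---not merely the ``possibility'' of strict inequality.
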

\begin{proof} The hypothesis implies that
$$\textstyle\nu(g(x+2^d))=\min'(\nu(g(x)),d+c).$$
Apply the lemma to $f(x)=\nu(g(x))-c$.\end{proof}

Let
\begin{equation}\label{phidef}\Phi_n(s)={\tfrac1{n!}}\sum_i\tbinom n{2i+1}(2i)^s.\end{equation}
Since
\begin{eqnarray*}&&P_n(2^{e-1}(x+2^d)+p)-P_n(2^{e-1}x+p)\\
 &=&{\tfrac1{n!}}\sum_i\tbinom n{2i+1}(2i+1)^{2^{e-1}x+p}((2i+1)^{2^{e-1+d}}-1)\\
 &=&\sum_{k\ge0}{\tbinom{2^{e-1}x+p}k}\sum_{j>0}\tbinom{2^{e-1+d}}j\Phi_n(j+k),\end{eqnarray*}
 Corollary \ref{D2cor} implies that to show
 $$\nu(P_n(2^{e-1}x+p))=\nu(x-x_0)+c$$
 for some $x_0\in\zt$, it suffices to prove that
 \begin{equation}\label{newjkeq}\nu\bigl(\sum_{k\ge0}{\tbinom{2^{e-1}x+p}k}\sum_{j>0}{\tfrac1{2^d}\tbinom{2^{e-1+d}}j}\Phi_n(j+k)\bigr)=c\end{equation}
 for all integers $x$ and $d$ with $d\ge0$ (and that $\nu(P_n(p))\ge c$). The study of (\ref{newjkeq}) will occupy much of our effort.

 Table \ref{t2} describes
the location of the zeros of $P_n$ for $17\le n\le 32$. This information was given, in a different form, in \cite[Table 1.3,1.4]{D2}.

\begin{center}
\begin{minipage}{6.5in}
\begin{diag}\label{t2}{Zeros of $P_n$ in $(p$ mod $8)$, $17\le n\le32$}
\begin{center}
\begin{picture}(200,350)
\def\mp{\multiput}
\def\elt{\circle*{3}}
\def\cir{\circle{3}}
\mp(30,0)(20,0){9}{\line(0,1){320}}
\mp(30,0)(0,20){17}{\line(1,0){160}}
\mp(40,310)(20,0){8}{\cir}
\mp(40,290)(20,0){8}{\cir}
\mp(40,270)(20,0){8}{\cir}
\mp(80,270)(80,0){2}{\elt}
\mp(40,250)(40,0){4}{\cir}
\mp(60,250)(40,0){4}{\elt}
\mp(40,230)(20,0){3}{\cir}
\put(100,230){\elt}
\mp(140,230)(20,0){2}{\cir}
\mp(175,230)(10,0){2}{\elt}
\mp(40,210)(20,0){4}{\cir}
\mp(115,210)(10,0){2}{\elt}
\mp(135,205)(0,10){2}{\elt}
\mp(160,210)(20,0){2}{\cir}
\mp(40,190)(20,0){5}{\cir}
\mp(80,190)(40,0){2}{\elt}
\mp(135,185)(0,10){2}{\elt}
\mp(155,185)(0,10){2}{\elt}
\put(180,190){\cir}
\mp(40,170)(20,0){6}{\cir}
\mp(60,170)(20,0){3}{\elt}
\put(140,170){\elt}
\mp(155,165)(0,10){2}{\elt}
\mp(175,165)(0,10){2}{\elt}
\mp(35,150)(10,0){2}{\cir}
\mp(60,150)(20,0){4}{\cir}
\put(100,150){\elt}
\mp(135,150)(20,0){3}{\elt}
\mp(145,150)(20,0){3}{\elt}
\mp(35,130)(10,0){4}{\cir}
\mp(80,130)(20,0){4}{\cir}
\mp(155,125)(0,10){2}{\elt}
\mp(175,130)(10,0){2}{\elt}
\mp(35,110)(10,0){4}{\cir}
\mp(75,110)(10,0){2}{\elt}
\mp(100,110)(20,0){3}{\cir}
\put(160,110){\elt}
\mp(175,110)(10,0){2}{\elt}
\mp(35,90)(10,0){8}{\cir}
\mp(55,90)(10,0){2}{\elt}
\mp(95,90)(10,0){2}{\elt}
\mp(120,90)(20,0){4}{\cir}
\mp(140,90)(40,0){2}{\elt}
\mp(35,70)(10,0){6}{\cir}
\put(105,70){\elt}
\mp(115,70)(10,0){2}{\elt}
\mp(140,70)(20,0){2}{\cir}
\put(180,70){\elt}
\mp(35,50)(10,0){12}{\cir}
\mp(115,50)(10,0){4}{\elt}
\mp(160,50)(20,0){2}{\cir}
\put(40,30){\elt}
\mp(55,30)(10,0){8}{\cir}
\mp(135,30)(10,0){4}{\elt}
\mp(75,30)(40,0){3}{\elt}
\mp(85,30)(40,0){3}{\elt}
\put(180,30){\cir}
\mp(40,10)(20,0){2}{\cir}
\put(60,10){\elt}
\mp(75,10)(10,0){12}{\elt}
\put(118,335){$p$}
\put(-8,187){$n$}
\put(38,322){$0$}
\put(58,322){$1$}
\put(78,322){$2$}
\put(98,322){$3$}
\put(118,322){$4$}
\put(138,322){$5$}
\put(158,322){$6$}
\put(178,322){$7$}
\put(5,307){$17$}
\put(5,287){$18$}
\put(5,267){$19$}
\put(5,247){$20$}
\put(5,227){$21$}
\put(5,207){$22$}
\put(5,187){$23$}
\put(5,167){$24$}
\put(5,147){$25$}
\put(5,127){$26$}
\put(5,107){$27$}
\put(5,87){$28$}
\put(5,67){$29$}
\put(5,47){$30$}
\put(5,27){$31$}
\put(5,7){$32$}
\end{picture}
\end{center}
\end{diag}
\end{minipage}
\end{center}

We now explain the table. We temporarily refer to either a $\bullet$ or a $\circ$ as a ``dot.''
The dots in $(n,p)$ represent the zeros of $P_n(z)$ for which $z\equiv p$ mod 8. A dot in the center of  square $(n,p)$ means that
$P_n$ has a zero of the form $8x_0+p$ for some $x_0\in\zt$, and that, moreover, there is an integer $c$ such that
\begin{equation}\nu(P_n(8x+p))=\nu(x-x_0)+c\label{kap}\end{equation}
for all integers $x$.
 Two horizontally-displaced dots
in a box mean that $P_n$ has zeros of the form $16x_0+p$ and $16x_1+8+p$, and analogues of (\ref{kap}) hold for
$\nu(P_n(16x+p))$ and $\nu(P_n(16x+8+p))$. Two vertically-displaced dots on the left side of a box mean that $P_n$ has zeros of the form
$32x_0+p$ and $32x_1+16+p$, with analogues of (\ref{kap}). The single dot on the right side of $(29,3)$ is a zero of the form $16x_0+11$.

Next we explain the difference between $\circ$ and $\bullet$ in the table.  In order to prove (\ref{kap}), we would like to prove
(\ref{newjkeq}) with $e=4$. The cases indicated by a single $\circ$ are those in which, for all $k\ge0$ and $j>0$
$$\nu\bigl({\tbinom{8x+p}k\tfrac1{2^d}\tbinom{2^{d+3}}j}\Phi_n(j+k)\bigr)\ge c$$
with equality for a unique pair $(k,j)$. Cases with two horizontally-displaced $\circ$'s are analogous with $8x+p$ replaced by $16x+p$ and $16x+8+p$, except that here
the minimum value will occur for a unique $(j,k)$ for $16x+p$, and for three $(j,k)$'s for $16x+8+p$.
The $\bullet$'s in the table are zeros of $P_n$ in which some of the terms $\binom{8x+p}k\frac1{2^d}\binom{2^{d+3}}j\Phi_n(j+k)$ have 2-exponent
smaller than that of their sum, and so more complicated combinations, involving odd factors of some terms, must be considered.

Next we present the analogue of Table \ref{t2} for $33\le n\le 64$. The main reason for including such a large table is
to illustrate the great deal of regularity, marred by a few exceptions. After presenting the table, we will explain the aspects in which
it differs from Table \ref{t2}.

\begin{center}
\begin{minipage}{6.5in}
\begin{diag}\label{t3}{Zeros of $P_n$ in $(p$ mod $16)$, $33\le n\le64$}
\begin{center}
\begin{picture}(400,660)
\def\mp{\multiput}
\def\elt{\circle*{3}}
\def\cir{\circle{3}}
\mp(20,0)(20,0){17}{\line(0,1){640}}
\mp(20,0)(0,20){33}{\line(1,0){320}}
\mp(30,630)(20,0){16}{\cir}
\mp(30,610)(20,0){16}{\cir}
\mp(30,590)(20,0){16}{\cir}
\mp(70,590)(80,0){4}{\elt}
\mp(30,570)(40,0){8}{\cir}
\mp(50,570)(40,0){8}{\elt}
\mp(30,550)(20,0){12}{\cir}
\mp(90,550)(20,0){2}{\elt}
\mp(170,550)(80,0){2}{\elt}
\mp(265,550)(10,0){2}{\elt}
\mp(290,550)(20,0){2}{\cir}
\mp(325,550)(10,0){2}{\elt}
\mp(30,530)(20,0){4}{\cir}
\mp(105,530)(10,0){2}{\elt}
\mp(130,530)(140,0){2}{\elt}
\mp(150,530)(20,0){6}{\cir}
\mp(295,525)(0,10){2}{\elt}
\mp(310,530)(20,0){2}{\cir}
\mp(30,510)(20,0){13}{\cir}
\mp(70,510)(160,0){2}{\elt}
\mp(110,510)(20,0){3}{\elt}
\put(270,510){\elt}
\mp(295,505)(0,10){2}{\elt}
\mp(315,505)(0,10){2}{\elt}
\put(330,510){\cir}
\mp(30,490)(20,0){14}{\cir}
\mp(50,490)(20,0){3}{\elt}
\mp(130,490)(20,0){3}{\elt}
\mp(210,490)(20,0){3}{\elt}
\put(290,490){\elt}
\mp(315,485)(0,10){2}{\elt}
\mp(335,485)(0,10){2}{\elt}
\mp(30,470)(20,0){5}{\cir}
\mp(90,470)(160,0){2}{\elt}
\mp(125,470)(10,0){2}{\elt}
\mp(165,470)(10,0){4}{\elt}
\mp(210,470)(20,0){5}{\cir}
\mp(290,470)(20,0){3}{\elt}
\put(142,466){$2$}
\mp(30,450)(20,0){6}{\cir}
\mp(150,450)(180,0){2}{\elt}
\mp(165,450)(10,0){6}{\elt}
\mp(230,450)(20,0){4}{\cir}
\mp(315,445)(0,10){2}{\elt}
\mp(30,430)(20,0){7}{\cir}
\mp(70,430)(80,0){2}{\elt}
\mp(165,430)(10,0){8}{\elt}
\mp(250,430)(20,0){3}{\cir}
\mp(310,430)(20,0){2}{\elt}
\mp(30,410)(40,0){4}{\cir}
\mp(50,410)(40,0){4}{\elt}
\mp(185,410)(10,0){8}{\elt}
\mp(270,410)(40,0){2}{\cir}
\mp(290,410)(40,0){2}{\elt}
\mp(30,390)(20,0){7}{\cir}
\mp(90,390)(20,0){2}{\elt}
\mp(170,390)(80,0){3}{\elt}
\put(182,386){$2$}
\mp(205,390)(10,0){4}{\elt}
\mp(290,390)(20,0){2}{\cir}
\mp(265,390)(10,0){2}{\elt}
\mp(30,370)(20,0){8}{\cir}
\mp(110,370)(20,0){2}{\elt}
\mp(185,370)(10,0){2}{\elt}
\mp(205,365)(0,10){2}{\elt}
\mp(225,370)(10,0){8}{\elt}
\mp(310,370)(20,0){2}{\cir}
\mp(30,350)(20,0){9}{\cir}
\mp(70,350)(40,0){4}{\elt}
\put(130,350){\elt}
\mp(205,345)(0,10){2}{\elt}
\mp(225,345)(0,10){2}{\elt}
\mp(245,350)(10,0){8}{\elt}
\put(330,350){\cir}
\mp(30,330)(20,0){10}{\cir}
\mp(50,330)(20,0){7}{\elt}
\put(210,330){\elt}
\mp(225,325)(0,10){2}{\elt}
\mp(245,325)(0,10){2}{\elt}
\mp(265,330)(10,0){8}{\elt}
\mp(25,310)(10,0){2}{\cir}
\mp(50,310)(20,0){8}{\cir}
\mp(90,310)(40,0){3}{\elt}
\put(150,310){\elt}
\mp(205,310)(10,0){14}{\elt}
\mp(25,290)(10,0){4}{\cir}
\mp(70,290)(20,0){8}{\cir}
\mp(150,290)(20,0){2}{\elt}
\mp(225,285)(0,10){2}{\elt}
\mp(245,290)(10,0){10}{\elt}
\mp(25,270)(10,0){6}{\cir}
\mp(65,270)(10,0){2}{\elt}
\mp(90,270)(20,0){8}{\cir}
\mp(150,270)(20,0){2}{\elt}
\put(230,270){\elt}
\mp(245,270)(10,0){10}{\elt}
\mp(25,250)(10,0){8}{\cir}
\mp(45,250)(10,0){2}{\elt}
\mp(85,250)(10,0){2}{\elt}
\mp(110,250)(40,0){4}{\cir}
\mp(130,250)(40,0){4}{\elt}
\mp(265,250)(10,0){8}{\elt}
\mp(25,230)(10,0){6}{\cir}
\mp(85,230)(10,0){2}{\elt}
\mp(105,225)(0,10){2}{\elt}
\put(115,230){\elt}
\mp(130,230)(20,0){2}{\cir}
\put(162,226){$2$}
\mp(190,230)(20,0){3}{\cir}
\mp(250,230)(20,0){2}{\elt}
\mp(285,230)(10,0){6}{\elt}
\mp(25,210)(10,0){8}{\cir}
\mp(105,210)(10,0){3}{\elt}
\mp(135,205)(0,10){2}{\elt}
\mp(150,210)(20,0){6}{\cir}
\mp(265,210)(10,0){2}{\elt}
\put(290,210){\elt}
\mp(305,210)(10,0){4}{\elt}
\mp(25,190)(10,0){10}{\cir}
\mp(65,190)(10,0){2}{\elt}
\mp(105,190)(10,0){2}{\elt}
\mp(125,190)(20,0){2}{\elt}
\mp(135,185)(0,10){2}{\elt}
\mp(155,185)(0,10){2}{\elt}
\mp(170,190)(20,0){8}{\cir}
\mp(230,190)(40,0){3}{\elt}
\put(290,190){\elt}
\mp(325,190)(10,0){2}{\elt}
\mp(25,170)(10,0){12}{\cir}
\mp(45,170)(10,0){6}{\elt}
\mp(125,170)(10,0){2}{\elt}
\mp(145,170)(20,0){2}{\elt}
\mp(155,165)(0,10){2}{\elt}
\mp(175,165)(0,10){2}{\elt}
\mp(190,170)(20,0){8}{\cir}
\mp(210,170)(20,0){3}{\elt}
\mp(290,170)(20,0){3}{\elt}
\mp(25,150)(10,0){18}{\cir}
\mp(125,150)(10,0){8}{\elt}
\mp(85,150)(10,0){2}{\elt}
\mp(210,150)(20,0){4}{\cir}
\put(250,150){\elt}
\mp(285,150)(10,0){2}{\elt}
\mp(325,150)(10,0){2}{\elt}
\put(302,146){$2$}
\mp(25,130)(10,0){12}{\cir}
\put(145,130){\elt}
\mp(155,125)(0,10){2}{\elt}
\mp(165,130)(10,0){6}{\elt}
\mp(185,130)(10,0){4}{\cir}
\mp(230,130)(20,0){4}{\cir}
\put(310,130){\elt}
\mp(325,130)(10,0){2}{\elt}
\mp(25,110)(10,0){22}{\cir}
\mp(145,110)(10,0){8}{\elt}
\mp(65,110)(10,0){2}{\elt}
\mp(250,110)(20,0){3}{\cir}
\put(310,110){\elt}
\mp(325,110)(10,0){2}{\elt}
\mp(25,90)(40,0){6}{\cir}
\mp(35,90)(40,0){6}{\cir}
\mp(45,90)(40,0){6}{\elt}
\mp(165,90)(10,0){8}{\elt}
\mp(55,90)(40,0){6}{\elt}
\mp(270,90)(40,0){2}{\cir}
\mp(290,90)(40,0){2}{\elt}
\mp(25,70)(10,0){6}{\cir}
\mp(95,70)(10,0){3}{\elt}
\mp(105,70)(10,0){18}{\cir}
\mp(165,70)(10,0){12}{\elt}
\mp(245,70)(10,0){2}{\elt}
\mp(290,70)(20,0){2}{\cir}
\put(330,70){\elt}
\mp(25,50)(10,0){28}{\cir}
\mp(105,50)(10,0){4}{\elt}
\mp(185,50)(10,0){10}{\elt}
\mp(310,50)(20,0){2}{\cir}
\mp(30,30)(300,0){2}{\cir}
\put(30,30){\elt}
\mp(45,30)(40,0){7}{\cir}
\mp(55,30)(40,0){7}{\cir}
\mp(65,30)(40,0){7}{\elt}
\mp(125,30)(10,0){2}{\elt}
\mp(185,30)(10,0){12}{\elt}
\mp(75,30)(40,0){7}{\elt}
\mp(30,10)(20,0){2}{\cir}
\put(50,10){\elt}
\mp(65,10)(10,0){28}{\elt}

\put(28,642){$0$}
\put(48,642){$1$}
\put(68,642){$2$}
\put(88,642){$3$}
\put(108,642){$4$}
\put(128,642){$5$}
\put(148,642){$6$}
\put(168,642){$7$}
\put(188,642){$8$}
\put(208,642){$9$}
\put(224,642){$10$}
\put(244,642){$11$}
\put(264,642){$12$}
\put(284,642){$13$}
\put(304,642){$14$}
\put(324,642){$15$}
\put(4,627){$33$}
\put(4,607){$34$}
\put(4,587){$35$}
\put(4,567){$36$}
\put(4,547){$37$}
\put(4,527){$38$}
\put(4,507){$39$}
\put(4,487){$40$}
\put(4,467){$41$}
\put(4,447){$42$}
\put(4,427){$43$}
\put(4,407){$44$}
\put(4,387){$45$}
\put(4,367){$46$}
\put(4,347){$47$}
\put(4,327){$48$}
\put(4,307){$49$}
\put(4,287){$50$}
\put(4,267){$51$}
\put(4,247){$52$}
\put(4,227){$53$}
\put(4,207){$54$}
\put(4,187){$55$}
\put(4,167){$56$}
\put(4,147){$57$}
\put(4,127){$58$}
\put(4,107){$59$}
\put(4,87){$60$}
\put(4,67){$61$}
\put(4,47){$62$}
\put(4,27){$63$}
\put(4,7){$64$}

\end{picture}
\end{center}
\end{diag}
\end{minipage}
\end{center}

A box $(n,p)$ in Table \ref{t3} with one dot on the left side and two vertically-placed dots on the right says that, in $(p$ mod 16), $P_n$ has zeros of the form $32x_0+p$, $64x_1+16+p$, and $64x_2+48+p$ with formulas analogous to (\ref{kap}) in each congruence class. If box $(n,p)$ has a number 2 on its left side, $P_n$ has two zeros in $(p$ mod 16) of the form $2^tx_0+32+p$ and $2^tx_1+2^{t-1}+32+p$ for $t=9$, 7, 9, 8, if $n=41$, 45, 53, 57, resp.

In \cite[Theorem 1.7]{partial}, we proved a general result describing a large family of cases in which, if $e=\lg(n-1)$, $P_n(2^{e-1}x+p)$ has a single zero, due to $\nu\bigl(\binom{2^{e-1}x+p}k\binom{2^{e-1}}j\Phi_n(j+k)\bigr)$ obtaining its minimum value for a unique $(j,k)$. For $17\le n\le64$,
these are the cases where the box $(n,p)$ in Table \ref{t2} or \ref{t3} has a single $\circ$ in the center of the box. We restate the result in a slightly simpler way here.
Here we introduce the notation $\a(n)$ for the number of 1's in the binary expansion of $n$. This notation will occur frequently in our proofs, mainly due to the well-known formulas
$$\nu(n!)=n-\a(n)\text{\quad and\quad}\nu\tbinom mn=\a(n)+\a(m-n)-\a(m),$$
which we will use without comment.
\begin{thm}\label{single} $($\cite[1.7]{partial}$)$ Let $e=\lg(n-1)$ and $t=\lg(n-2^e)$. Suppose $\max(0,n-2^e-2^{e-1})\le p<2^{e-1}$ and $\binom{n-1-p}p$ is odd, and let $p_0$ denote the mod $2^t$ reduction of $p$. Suppose
$$q=p+\eps\cdot2^{\nu(n)-1}+b\cdot2^{t+1}$$
for $\eps\in\{0,1\}$ and $b\ge0$, with $q<2^{e-1}$. Then
$$\nu\biggl(\binom{2^{e-1}x+q}k\binom{2^{e-1}}j\Phi_n(j,k)\biggr)\ge\a(n)-2-\a(p_0)$$
with equality iff $(j,k)=(2^{e-1},p_0)$.\end{thm}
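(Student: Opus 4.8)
The plan is to bound the $2$-adic valuation of each of the three factors $\binom{2^{e-1}x+q}{k}$, $\binom{2^{e-1}}{j}$ and $\Phi_n(j+k)$ separately and then minimize over the pair $(j,k)$. Write $M=2^{e-1}x+q$ and $c=\a(n)-2-\a(p_0)$. The middle factor is completely explicit: $\nu\binom{2^{e-1}}{j}=(e-1)-\nu(j)$, so it is $\ge0$, equals $0$ exactly when $j=2^{e-1}$, is $\ge1$ when $0<j<2^{e-1}$, and is $+\infty$ when $j>2^{e-1}$ (a case we discard at once). For the outer factor, Kummer's theorem identifies $\nu\binom{M}{k}$ with the number of carries in the base-$2$ addition $k+(M-k)$, so it is always $\ge0$; in particular $\binom{M}{p_0}$ is odd for \emph{every} $x$, because the $2^i$-bits of $M$ with $i<t$ coincide with those of $q$ and these contain all $1$-bits of $p_0$. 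Verifying this last point is the first use of the hypotheses: it is immediate when $\eps=0$, since then $q\equiv p\equiv p_0\pmod{2^t}$; and when $\eps=1$ it follows from the fact that ``$\binom{n-1-p}{p}$ odd'' forces the bit of $p$ in position $\nu(n)-1$ to be $0$ (by a short Lucas-type computation, this condition in fact makes $2^{\nu(n)}$ divide $p$), so that the displacement $\eps\cdot2^{\nu(n)-1}$ only sets a new bit of $p$ below position $t$ while $b\cdot2^{t+1}$ disturbs no bit below position $t$. The whole statement is thereby reduced to controlling $\nu(\Phi_n(j+k))$, and $j+k\ge1$ always since $j>0$.

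The crux is a computation of $\nu(\Phi_n(s))$ for $s\ge1$. Writing $(2i)^s=2^si^s$ and expanding $i^s=\sum_{\kappa\le s}S(s,\kappa)\,i^{\underline{\kappa}}$ into falling factorials (with $S(s,\kappa)$ the Stirling numbers of the second kind) gives
$$\Phi_n(s)=\frac{2^s}{n!}\sum_{i\ge1}\binom{n}{2i+1}i^s=\frac{2^s}{n!}\sum_{\kappa\le s}S(s,\kappa)\,r_\kappa(n),\qquad r_\kappa(n):=\sum_i\binom{n}{2i+1}i^{\underline{\kappa}},$$
where each $r_\kappa(n)$ is explicitly computable (a power of $2$ times a polynomial in $n$, e.g.\ via $\sum_i\binom{n}{2i+1}u^i=\tfrac{1}{2\sqrt u}\bigl((1+\sqrt u)^n-(1-\sqrt u)^n\bigr)$). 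Combined with $\nu(n!)=n-\a(n)$, this reduces $\nu(\Phi_n(s))$ to the $2$-adic valuation of an explicit alternating sum over $\kappa\le s$, which one evaluates by induction on $\lg(s)$ and on the binary expansions of $s$ and of $n$. Here ``$\binom{n-1-p}{p}$ odd'' forces the mod-$2$ pattern of the weights $\binom{n}{2i+1}$ (equivalently, the relevant carry patterns) to be as simple as possible, $p\ge n-2^e-2^{e-1}$ keeps the falling-factorial terms out of a range where extra cancellation could occur, and the precise form $q=p+\eps\cdot2^{\nu(n)-1}+b\cdot2^{t+1}$ is exactly the family of perturbations of $p$ that preserves those features together with the value of $p_0$, hence of $c$. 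What the analysis must deliver is two facts: (B) the diagonal value $\nu\bigl(\Phi_n(2^{e-1}+p_0)\bigr)=c$; and (C) the off-diagonal estimate $\nu\binom{M}{k}+\nu\bigl(\Phi_n(j+k)\bigr)>c-\bigl((e-1)-\nu(j)\bigr)$ for every $(j,k)\ne(2^{e-1},p_0)$ with $0<j\le2^{e-1}$ and $k\ge0$. Intuitively (C) says that wherever $\Phi_n$ is very small---which happens only at arguments with many binary digits---the binomial coefficient $\binom{M}{k}$ is correspondingly very even and over-compensates.

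Granting (B) and (C), the theorem follows at once. At $(j,k)=(2^{e-1},p_0)$ the three valuations are $0$, $0$ and $c$ (using the oddness of $\binom{M}{p_0}$ and (B)), giving exactly $c$. At any other $(j,k)$ with $0<j\le2^{e-1}$, adding $\nu\binom{2^{e-1}}{j}=(e-1)-\nu(j)$ to the inequality (C) yields total valuation $>\bigl((e-1)-\nu(j)\bigr)+\bigl(c-(e-1)+\nu(j)\bigr)=c$; and $j>2^{e-1}$ gives $+\infty$. Hence the displayed quantity has valuation $\ge c$ always, with equality precisely at $(j,k)=(2^{e-1},p_0)$.

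The main obstacle is plainly the pair (B)--(C): extracting $\nu(\Phi_n(s))$ from the Stirling/falling-factorial expansion, and showing that after all cancellation the unique product-minimizing pair is the diagonal one. This is a delicate piece of $2$-adic bookkeeping over the binary expansions of $n$, $p$, $j$ and $k$, and it is where all three standing hypotheses on $n$, $p$ and $q$ get consumed; by contrast the reductions of the first paragraph are essentially formal.
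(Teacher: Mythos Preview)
Your framework is correct, and the reductions in your first paragraph (the exact valuation of $\binom{2^{e-1}}{j}$, the Kummer/Lucas argument that $\binom{M}{p_0}$ is odd, and the verification that $2^{\nu(n)}\mid p$ under the hypothesis $\binom{n-1-p}{p}$ odd) are all sound. But the proposal is not a proof: the entire content of the theorem is packed into your unproven claims (B) and (C), and you explicitly stop short of establishing them. Your suggested route---expanding $i^s$ in falling factorials and doing ``delicate $2$-adic bookkeeping'' over binary expansions---is a plausible way to derive (B) and (C) from scratch, but you have not carried it out, and the details are not routine.

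For comparison, the paper does not prove this theorem at all; it is quoted verbatim from \cite[Theorem 1.7]{partial}. However, the proof given there (and the proof of the closely related Theorem~\ref{double} in this paper) does not attack $\nu(\Phi_n(s))$ ab initio via Stirling numbers as you propose. Instead it invokes a ready-made estimate, stated here as Proposition~\ref{refine}: for $n>k$, $\nu(\Phi_n(k))\ge\a(n)-1-\a(k)$, with equality iff $\binom{n-1-k}{k}$ is odd. Your fact (B) is then immediate, since $\a(2^{e-1}+p_0)=1+\a(p_0)$ and one checks the oddness of $\binom{n-1-2^{e-1}-p_0}{2^{e-1}+p_0}$ from the hypotheses on $n$ and $p$. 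Your fact (C) is handled by combining this same proposition with an auxiliary lemma (\cite[Lemma~2.40]{partial}) that controls, for each $k$, the values of $j$ at which the carry count $\nu\binom{q}{k}+\bigl((e-1)-\nu(j)\bigr)$ can be small, together with Proposition~\ref{DSprop} to dispose of large $j+k$. So the actual argument is a case analysis built on pre-established valuation formulas for $\Phi_n$, not a fresh Stirling-number computation; what you would need to do to complete your approach is essentially to reprove Proposition~\ref{refine} and the relevant pieces of \cite[Lemmas 2.40, 2.44]{partial}.
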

\begin{cor} If $n$, $p_0$, and $q$ are as above, then there exists $x_0\in\zt$ such that for all integers $x$
$$\nu(P_n(2^{e-1}x+q))=\nu(x-x_0)+\a(n)-2-\a(p_0).$$
Hence, $P_n$ has a unique zero, $2^{e-1}x_0+q$, in $(q$ mod $2^{e-1})$.\label{corsing}
\end{cor}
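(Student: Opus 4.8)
The plan is to apply Corollary \ref{D2cor} to the function $g(x):=P_n(2^{e-1}x+q)$ with $c:=\a(n)-2-\a(p_0)$. By the computation preceding (\ref{newjkeq}), $g(x+2^d)-g(x)$ equals $2^d$ times $\sum_{k\ge0}\binom{2^{e-1}x+q}{k}\sum_{j>0}\frac1{2^d}\binom{2^{e-1+d}}{j}\Phi_n(j+k)$, so the hypotheses of Corollary \ref{D2cor} reduce to two assertions: (a) for every integer $x$ and every $d\ge0$, this double sum has $2$-adic valuation exactly $c$; and (b) $\nu(P_n(q))\ge c$. Granting (a) and (b), Corollary \ref{D2cor} produces $x_0\in\zt$ with $\nu(P_n(2^{e-1}x+q))=\nu(x-x_0)+c$ for all $x\in\Z$. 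Since $c$ is a finite integer, the right-hand side is finite, hence $P_n(2^{e-1}x+q)\ne0$, unless $x=x_0$; as $P_n$ is continuous and $\Z$ is dense in $\zt$, it follows that for $x\in\zt$ one has $P_n(2^{e-1}x+q)=0$ exactly when $x=x_0$, so $2^{e-1}x_0+q$ is the unique zero of $P_n$ in the residue class $q$ mod $2^{e-1}$.

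For (a) I would show the double sum has a single index of minimal valuation, namely $(j,k)=(2^{e-1},p_0)$, whose term has valuation $c$; the valuation of the whole sum then equals $c$. When $0<j\le2^{e-1}$, the low bits of $2^{e-1+d}-j$ lie below position $e-1$ while its remaining $d$ bits occupy positions $e-1,\dots,e+d-2$, so $\a(2^{e-1+d}-j)=d+\a(2^{e-1}-j)$ and hence $\nu\bigl(\frac1{2^d}\binom{2^{e-1+d}}{j}\bigr)=\nu\binom{2^{e-1}}{j}$; for these $j$ the term therefore has the same valuation as the corresponding $d=0$ term, whose size is governed precisely by Theorem \ref{single} (it is $\ge c$, with equality only at $(2^{e-1},p_0)$, where one also checks that the valuation is $c$ for all $x$, not just some). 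The remaining terms have $j>2^{e-1}$, and must be shown to have valuation strictly greater than $c$: here I would bound $\nu\bigl(\frac1{2^d}\binom{2^{e-1+d}}{j}\bigr)$ below via the number of carries $\a(j)+\a(2^{e-1+d}-j)-1$ in the addition $j+(2^{e-1+d}-j)=2^{e-1+d}$, which is large unless the leading bit of $j$ is near position $e-1+d$, and combine this with a lower bound for $\nu(\Phi_n(j+k))$ coming from $\nu(n!)=n-\a(n)$, from $\Phi_n(0)=2^{\a(n)-1}/\U(n!)$, and from the congruence, valid for $s$ a high power of $2$ plus a small correction, that reduces $\Phi_n(s)$ modulo a large power of $2$ to $2^{s}$ times a sum of binomial coefficients.

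For (b) one uses that $q<2^{e-1}<n$, so $S(q,n)=0$, and a short manipulation (equating the even-index and odd-index parts of $\sum_m(-1)^{n-m}\binom nm m^q=0$ and expanding $m^q$ in falling factorials) gives
\[P_n(q)=\sum_{j=0}^{q}S(q,j)\,\frac{2^{\,n-1-j}}{(n-j)!}.\]
Each term here has valuation $\nu(S(q,j))+\a(n-j)-1$, and the standing hypotheses ($\binom{n-1-p}{p}$ odd, $p\ge\max(0,n-2^e-2^{e-1})$, $q\equiv p\pmod{2^t}$, $q<2^{e-1}$) are exactly what is needed to force each such valuation to be $\ge c=\a(n)-2-\a(p_0)$; this comes down to elementary comparisons of $\a(n-j)$ with $\a(n)$ and $\a(p_0)$ for $0\le j\le q$. (When $c\le0$, which covers many cases, one may instead just cite Proposition \ref{P0}.)

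The main obstacle is the $j>2^{e-1}$ part of (a): Theorem \ref{single} as stated applies only to $d=0$ and says nothing about $j>2^{e-1}$, whereas the estimate needed for those terms must be uniform in $d$ even though $\nu\bigl(\frac1{2^d}\binom{2^{e-1+d}}{j}\bigr)$ can be as small as roughly $1-d$. The saving feature is that the only $j>2^{e-1}$ for which this valuation is very negative are themselves forced to be exponentially large in $d$ (for example $j=2^{e+d-2}$ is the unique $j$ giving a single carry), which makes $\Phi_n(j+k)$ correspondingly $2$-divisible; hence only a bounded range of $d$ is genuinely delicate, and there a direct check using the structure of $\Phi_n$ suffices. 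Alternatively, one can invoke the full strength of the result of \cite{partial} of which Theorem \ref{single} is the stated simplification, since its proof already controls the $d\ge1$ terms.
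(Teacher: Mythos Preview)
Your overall strategy is exactly the paper's: feed $g(x)=P_n(2^{e-1}x+q)$ into Corollary \ref{D2cor}, and reduce the hypothesis $\nu(g(x+2^d)-g(x))=d+c$ to the term--by--term statement (\ref{newjkeq}). The paper gives no separate proof of Corollary \ref{corsing}; it is presented as the ``single zero'' conclusion already contained in \cite[Theorem 1.7]{partial}, of which Theorem \ref{single} is a restatement. So your final sentence---invoke the full result of \cite{partial}---is precisely what the paper does.

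Your treatment of the small-$j$ range is clean and correct: the identity $\a(2^{e-1+d}-j)=d+\a(2^{e-1}-j)$ for $1\le j\le 2^{e-1}$ gives $\nu\bigl(\tfrac1{2^d}\tbinom{2^{e-1+d}}{j}\bigr)=\nu\tbinom{2^{e-1}}{j}$, so Theorem \ref{single} transfers verbatim to those terms for every $d$. You are also right that the residual range $j>2^{e-1}$ is the genuine issue, since Theorem \ref{single} is vacuous there. However, the sketch you give for that range does not quite close: for example $j=2^e$ (which appears already at $d=1$) has $\nu\bigl(\tfrac1{2^d}\tbinom{2^{e-1+d}}{j}\bigr)=-1$, and combining only Proposition \ref{DSprop} with Proposition \ref{qprop1} yields a term valuation $\ge -1+\nu(\Phi_n(2^e))$, which is not automatically $>c$ when $c$ is large (e.g.\ $\a(n)$ close to $e+1$). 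One really needs the finer bound of Proposition \ref{refine} together with \ref{DSprop}, split according to whether $j+k<n$ or $j+k\ge n$, and a short case check for the few borderline $j$'s (compare how the proof of Theorem \ref{four} disposes of $\nu(j)>e-1$ and $j=3\cdot2^{e-2}$ via Proposition \ref{DSprop}). So your identification of the obstacle is correct, but the fix needs more than the heuristic you wrote.

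For (b), your formula $P_n(q)=\sum_{j\le q}S(q,j)\,2^{\,n-1-j}/(n-j)!$ is exactly the opening computation in the proof of Theorem \ref{Delthm}, and the reduction to $\nu(S(q,j))+\a(n-j)-1\ge c$ is the right inequality to aim for; as you note, when $c\le0$ Proposition \ref{P0} already suffices.
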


\begin{rmk} \label{rem}{\rm The boxes in Tables \ref{t2} and \ref{t3} with a single $\circ$ in the center are all the cases in this range in which, if $e=\lg(n-1)$, $P_n(2^{e-1}x+p)$ has a single zero, due to $\nu\bigl(\binom{2^{e-1}x+p}k\binom{2^{e-1}}j\Phi_n(j+k)\bigr)$ obtaining its minimum value for a unique $(j,k)$. All of these cases fit into families that work for all $e$.
However, there are several of these which are not covered by Theorem \ref{single}. When $n=2^e+1$ or $2^e+2$, not all values of $p$ are handled by Theorem \ref{single}, but they are handled by Theorem \ref{four}. Also the case $(n,p)=(2^e+2^e,0)$ is not covered by Theorem \ref{single}, but it is easily proved, using Proposition \ref{qprop1}, that if $j>0$}
$$\nu\bigl(\tbinom{2^{e-1}x}k\tbinom{2^{e-1}}j\Phi_{2^{e+1}}(j+k)\bigr)\ge0\text{ with equality iff }(j,k)=(2^{e-1},0),$$
{\rm and hence $P_{2^{e+1}}$ has a single zero in (0 mod $2^{e-1}$). We conjecture that, for all $e$ and $n$ with $\lg(n-1)=e$,  these families together provide all cases in which $P_n(2^{e-1}x+p)$ has a single zero due to a single $(j,k)$. }\end{rmk}

A similar result describes cases in which (with $e=\lg(n-1)$) a mod $2^{e-1}$ class splits into two mod-$2^e$ classes with each having a single zero
of $P_n$, and the first of the two determined by a unique $(j,k)$, and the second by three $(j,k)$'s. These are represented in Tables \ref{t2} and \ref{t3} by boxes with two
horizontally-displaced $\circ$'s.
\begin{thm}\label{double} Let $3\cdot2^{e-1}< n<2^{e+1}$. Suppose $0\le p\le[(n-3\cdot2^{e-1})/2]$, $(n,p)\ne(2^{e+1}-1,0)$, and $\binom{n-1-p}p$ is odd. Let $\ell=\lg(2^{e+1}-(n-p))$. If $q=p+\eps\cdot2^{\nu(n)-1}$ for $\eps\in\{0,1\}$, then, if $\delta\in\{0,1\}$,  $x\in\Z$, $k\ge0$, and $j>0$,
\begin{equation}\label{alph}\nu\biggl(\binom{2^ex+\delta2^{e-1}+q}k\binom{2^{e}}j\Phi_n(j+k)\biggr)\ge\a(n)-1-\a(p)\end{equation}
with equality iff $(j,k)=(2^e-2^\ell,p)$ or $\delta=1$ and
$$(j,k)\in\{(2^{e-1}-2^\ell,2^{e-1}+p),\ (2^{e-1},2^{e-1}+p-2^\ell )\}.$$
\end{thm}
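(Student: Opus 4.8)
The plan is to argue term by term, following the strategy of Theorem \ref{single} (\cite[1.7]{partial}); the one new feature is that the mod $2^{e-1}$ class has been split into the two mod $2^e$ classes recorded by $\delta$, so that carries into bit $e-1$ of the first binomial coefficient must now be tracked, and this is exactly what produces the ``two horizontally-displaced $\circ$'' pattern of Tables \ref{t2} and \ref{t3} via Corollary \ref{D2cor}.

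First I would collect the three factors. For the middle one, $\nu\tbinom{2^e}j=e-\nu(j)$ when $0<j<2^e$ (write $\tbinom{2^e}j=\tfrac{2^e}j\tbinom{2^e-1}{j-1}$ and note that $\tbinom{2^e-1}{j-1}$ is odd), while $\tbinom{2^e}j=0$ for $j>2^e$ and $\tbinom{2^e}{2^e}=1$. For the first factor I would use Kummer's theorem: $\nu\tbinom Nk$ is the number of carries on adding $k$ and $N-k$ in base $2$, and since $\delta2^{e-1}+q<2^e$, for $k<2^e$ this count depends only on $k$ and on the low $e$ bits $\delta2^{e-1}+q$ of $N=2^ex+\delta2^{e-1}+q$. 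The decisive third factor is a sharp lower bound for $\nu(\Phi_n(m))$ in terms of the binary digits of $m$ and $n$, with explicit equality conditions; this is the $\Phi_n$-estimate that underlies Theorem \ref{single} (cf.\ Proposition \ref{qprop1}), and I would invoke it. Finally I would unwind the hypotheses. ``$\tbinom{n-1-p}p$ odd'' means, by Lucas, that the binary support of $p$ lies in that of $n-1-p$; together with $3\cdot2^{e-1}<n<2^{e+1}$ this forces bits $e$ and $e-1$ of $n-1$ to be $1$, and with $p\le[(n-3\cdot2^{e-1})/2]$ it gives $p<2^{e-2}$ and $\ell=\lg(2^{e+1}-(n-p))\le e-2$, so that the carry chain running from bit $\ell$ up to bit $e-1$ is nontrivial --- this is where $n>3\cdot2^{e-1}$ is used, and the excluded case $(n,p)\ne(2^{e+1}-1,0)$ is the one degenerate configuration. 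One also verifies that bit $\nu(n)-1$ is not in the support of $p$, so the $\eps2^{\nu(n)-1}$ summand of $q$ is digit-disjoint from $p$ and causes no carries below bit $e-1$.

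Next I would check that the three listed pairs realize equality. Put $m:=2^e-2^\ell+p$; all three satisfy $j+k=m$, since $(2^e-2^\ell)+p=(2^{e-1}-2^\ell)+(2^{e-1}+p)=2^{e-1}+(2^{e-1}+p-2^\ell)=m$. The middle exponents are $\nu\tbinom{2^e}{2^e-2^\ell}=\nu\tbinom{2^e}{2^{e-1}-2^\ell}=e-\ell$ (as $\nu(2^{e-1}-2^\ell)=\ell$ for $\ell\le e-2$) and $\nu\tbinom{2^e}{2^{e-1}}=1$. The first exponents are $0$, $0$, and $e-1-\ell$ respectively: the third because adding $2^{e-1}+p-2^\ell$ to its complement $2^\ell$ sets off a carry chain of length $e-1-\ell$ reaching bit $e-1$, the $\eps$-term contributing nothing further by the digit-disjointness above (one separates the subcases $p<2^\ell$ and $p\ge2^\ell$). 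Finally the $\Phi_n$-estimate evaluates at $m$ to $\nu(\Phi_n(m))=\a(n)-1-\a(p)-(e-\ell)$. Summing the three exponents yields $\a(n)-1-\a(p)$ in each case, so (\ref{alph}) holds with equality at these pairs; the first pair $(2^e-2^\ell,p)$ works for both values of $\delta$ since it involves only the low $e-1$ bits $q$.

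It remains to show that no other pair gives equality, and this is where the bulk of the work lies. Combining the three estimates proves (\ref{alph}) for every admissible $(j,k)$; for most $(j,k)$ the combined lower bound is already strictly above $\a(n)-1-\a(p)$, and only a short list survives. The middle factor forces $\nu(j)$ to be moderately large, so $j$ lies among $2^e$, $2^{e-1}$, $2^e-2^\ell$, $2^{e-1}-2^\ell$ and a few neighbours; for each such $j$ the complementary $k=m-j$ carries a forced carry count against the low bits $\delta2^{e-1}+q$, and the $\Phi_n$-estimate confines the admissible $m$ to a narrow window around $2^e-2^\ell+p$ (using the digit constraints on $n$, $p$, $\ell$ and $p\le[(n-3\cdot2^{e-1})/2]$). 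A direct inspection of this finite list then leaves exactly the pairs named: three when $\delta=1$, since bit $e-1$ of the fixed part is then available to absorb the length-$(e-1-\ell)$ carry chain and $k\ge2^{e-1}$ becomes possible, and one when $\delta=0$. The genuinely new ingredient, and the main obstacle, is this $\delta=1$ bookkeeping --- confirming that the carry chain from bit $\ell$ to bit $e-1$ produces precisely the two extra equality pairs and that no competing $m$ slips under the bound; the $\delta=0$ analysis is essentially that of Theorem \ref{single}.
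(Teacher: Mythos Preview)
Your overall architecture matches the paper's: decompose into the three factors, use $\nu\tbinom{2^e}j=e-\nu(j)$ for the middle one, carry-counting for the first, and a sharp $\Phi_n$-bound for the third; then verify equality at the named pairs (your observation that all three satisfy $j+k=2^e-2^\ell+p$ is correct and clarifying) and rule out all others. Two concrete gaps remain. First, the $\Phi_n$-estimate you need is Proposition \ref{refine} ($\nu(\Phi_n(s))\ge\a(n)-1-\a(s)$, with equality iff $\tbinom{n-1-s}{s}$ is odd), not Proposition \ref{qprop1}, which only gives $\nu(\Phi_n)\ge 0$; you describe the right content but cite the wrong result.

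Second, and more seriously, the uniqueness argument---which you correctly flag as ``the bulk of the work''---is not carried out, and your sketch hides the actual difficulty. Your claim that $j$ lies among ``$2^e$, $2^{e-1}$, $2^e-2^\ell$, $2^{e-1}-2^\ell$ and a few neighbours'' is unjustified: combining Proposition \ref{refine} with the carry count on $\tbinom{2^ex+\delta2^{e-1}+q}k$ only forces (in the $\delta=0$ case, say) $k=p$ and $j=2^e-2^h$ for \emph{some} $h$ with $2^h>p$ and $\tbinom{n-1-p-2^e+2^h}{p+2^e-2^h}$ odd. Pinning down $h=\ell$ is the content of the paper's Lemma \ref{lemlem}, which is a genuine binomial-parity computation, not an inspection. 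Likewise for $\delta=1$ the paper organizes the analysis into the three ranges $k\le p$, $p<k<2^{e-1}$, $k\ge 2^{e-1}$ via \cite[Lemmas 2.40, 2.44]{partial}, and in each range reduces the oddness condition back to Lemma \ref{lemlem}. ``Direct inspection of this finite list'' does not suffice, because the list is parametrized by $h$ and the step singling out $h=\ell$ is exactly the lemma you have not stated.
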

\begin{cor} Let $n$, $p$, $q$ and $\delta$ be as above. Then there exists a 2-adic integer $x_\delta$ such that for  all integers $x$
$$\nu(P_n(2^ex+2^{e-1}\delta+q))=\nu(x-x_\delta)+\a(n)-1-\a(p).$$
Hence $P_n$ has a unique zero, $2^ex_\delta+2^{e-1}\delta+q$, in $(2^{e-1}\delta +q$ mod $2^e)$.\label{cordbl}
\end{cor}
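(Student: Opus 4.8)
The plan is to apply Corollary \ref{D2cor}, fed by Theorem \ref{double}, in the manner indicated around (\ref{newjkeq}) but with base $2^e$ in place of $2^{e-1}$. Fix $n,e,p,q,\delta$ as in the hypotheses, put $c:=\a(n)-1-\a(p)$ and $g(x):=P_n(2^ex+2^{e-1}\delta+q)$, and write $m=m(x):=2^ex+2^{e-1}\delta+q$. Expanding $(2i+1)^{m+2^{e+d}}-(2i+1)^m=(2i+1)^m\bigl((2i+1)^{2^{e+d}}-1\bigr)$ as a double sum in powers of $2i$, exactly as in the computation preceding (\ref{newjkeq}), gives
$$\tfrac1{2^d}\bigl(g(x+2^d)-g(x)\bigr)=\sum_{k\ge0}\tbinom mk\sum_{j>0}\tfrac1{2^d}\tbinom{2^{e+d}}j\,\Phi_n(j+k).$$
By Corollary \ref{D2cor} it then suffices to show this has valuation exactly $c$ for every $x\in\Z$ and $d\ge0$, together with $\nu(P_n(2^{e-1}\delta+q))\ge c$, which is checked directly. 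Granting these, Corollary \ref{D2cor} supplies $x_\delta\in\zt$ with $\nu(P_n(2^ex+2^{e-1}\delta+q))=\nu(x-x_\delta)+c$, and the ``hence'' clause follows from continuity of $P_n$ (Proposition \ref{per}) and density of $\Z$ in $\zt$, since then $g(x)\to0$ precisely as the integer $x\to x_\delta$.

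For $d=0$ the inner coefficient is $\tbinom{2^e}j$, and Theorem \ref{double} says every summand $\tbinom mk\tbinom{2^e}j\Phi_n(j+k)$ has valuation $\ge c$, with equality for exactly one pair $(j,k)$ when $\delta=0$ and for exactly three pairs when $\delta=1$ (the hypotheses force $\ell\le e-2$, so those three are genuinely distinct and each has $0<j<2^e$). Since an odd number of summands attain the minimum $c$ and all others are strictly larger, the whole sum is $2^c$ times (a sum of an odd number of $2$-adic units plus some even elements), hence $2^c$ times a unit, so it has valuation exactly $c$. For $d\ge1$ I would split the inner sum at $j=2^e$. For $1\le j\le2^e$ the binary digits of $2^e-j$ all lie below position $e$, so adding $2^e(2^d-1)$ causes no carries and $\a(2^{e+d}-j)=\a(2^e-j)+d$; therefore $\nu\bigl(\tfrac1{2^d}\tbinom{2^{e+d}}j\bigr)=\nu\bigl(\tbinom{2^e}j\bigr)$, and these summands reproduce the $d=0$ valuations verbatim, contributing the same one (resp. three) minimal pairs. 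It then remains only to show that every summand with $j>2^e$ has valuation strictly greater than $c$, so that the count is unaffected; the odd-count argument above again yields valuation $c$, and Corollary \ref{D2cor} applies.

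The main obstacle is exactly this estimate for the summands with $j>2^e$ (which occur only for $d\ge1$). A naive bound is hopeless: one gets only $\nu(\Phi_n(j+k))\ge j+k-\nu(n!)$, and $\nu(n!)=n-\a(n)$ grows exponentially in $e$ (since $n<2^{e+1}$), so this need not even be positive, much less $>c$. What must be exploited is the cancellation that keeps $\Phi_n$ small---the same cancellation underlying the $2$-integrality of $P_n$ in Proposition \ref{P0}. I expect the cleanest route is that the proof of Theorem \ref{double} (as with Theorem \ref{single} behind Corollary \ref{corsing}) in fact works with $2^e$ replaced by $2^{e+d}$ and $j$ ranging over all positive integers, i.e. establishes the ``valuation $\ge c$, equality iff'' dichotomy directly at level $2^{e+d}$; then the $d\ge1$ case is immediate and no separate divisibility lemma for $\Phi_n$ is needed. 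Absent that, one needs an independent sharp lower bound for $\nu(\Phi_n(s))$ when $s>2^e$, and that is where the real difficulty lies.
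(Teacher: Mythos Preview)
Your overall strategy is exactly the paper's intended one: feed Theorem \ref{double} into Corollary \ref{D2cor} via (\ref{newjkeq}) (with base $2^e$), observing that $\nu\bigl(\tfrac1{2^d}\tbinom{2^{e+d}}{j}\bigr)=e-\nu(j)=\nu\tbinom{2^e}{j}$ for $1\le j\le 2^e$, so the valuations and the odd count of minimum-achieving $(j,k)$ carry over verbatim from $d=0$.

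The only genuine gap is your treatment of $j>2^e$, and the difficulty you describe there is illusory --- it comes from misquoting Proposition \ref{DSprop}. That proposition gives $\nu(\Phi_n(s))\ge s-[n/2]$, not $s-\nu(n!)$; since $[n/2]<2^e$ while $\nu(n!)=n-\a(n)$ is roughly $2^{e+1}$, this is a much stronger bound. With it the estimate is routine: for $j>2^e$ one checks $j-\nu(j)\ge 2^e+1$ (split into $\nu(j)<e$, $\nu(j)=e$, $\nu(j)>e$; in each case the minimal admissible $j$ already gives at least $2^e+1$ once $e\ge2$), and hence
\[
\nu\Bigl(\tbinom mk\tfrac1{2^d}\tbinom{2^{e+d}}{j}\Phi_n(j+k)\Bigr)\ \ge\ (e-\nu(j))+(j+k)-[n/2]\ \ge\ (j-\nu(j))+e-[n/2]\ \ge\ e+2,
\]
which exceeds $c=\a(n)-1-\a(p)\le e$. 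So every $j>2^e$ summand has valuation strictly above $c$, the odd-count argument from the $d=0$ case goes through unchanged, and Corollary \ref{D2cor} applies. No ``sharp lower bound for $\nu(\Phi_n(s))$ when $s>2^e$'' beyond Proposition \ref{DSprop} is needed; this is precisely how the paper dispatches the large-$j$ terms elsewhere (cf.\ the proof of Theorem \ref{four} and the parenthetical ``Proposition \ref{DSprop} to handle $j=2^e$'' in the proof of Theorem \ref{double}).
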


There are two types of proofs which we require, both involving the verification of (\ref{newjkeq}).
One is to establish the zeros of $P_n$ for a specific $n$ and specific congruence class, as are presented in Tables \ref{t2} and \ref{t3}.
The other is to prove  general results, such as Theorems \ref{four} and \ref{double}, which apply to infinitely many values of $n$.
The first of these types can be accomplished using {\tt Maple}, using Proposition \ref{DSprop} to limit the set of values of $(j,k)$ that need to be checked.
This will be discussed in the remainder of this section. The second type involves using general results about $\nu(\Phi_n(s))$; this will be discussed in Section \ref{pfsec}.

The following useful result is a restatement of \cite[Theorem 3.4]{DS1}.
\begin{prop}\label{DSprop} If $n$ and $s$ are nonnegative integers, then $\nu(\Phi_n(s))\ge s-[n/2]$.
\end{prop}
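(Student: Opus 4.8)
This is a restatement of \cite[Theorem 3.4]{DS1}, so the plan is to reduce it to the estimate established there; I will nonetheless indicate how the bound $\nu(\Phi_n(s))\ge s-[n/2]$ arises intrinsically. Recall $\Phi_n(s)=\tfrac1{n!}\sum_i\binom n{2i+1}(2i)^s$. The factor $(2i)^s$ already contributes $2^s$ to each term (when $i\ge1$; the $i=0$ term vanishes for $s\ge1$, and for $s=0$ the claim $\nu(\Phi_n(0))\ge-[n/2]$ is a separate easy check), so the whole sum $\sum_i\binom n{2i+1}(2i)^s$ is divisible by $2^s$. Thus it suffices to show that, after dividing by $2^s$, the resulting integer combination of binomial coefficients is divisible by $2^{\lceil n/2\rceil-\alpha(n)}$, since $\nu(n!)=n-\alpha(n)$ and $n-\alpha(n)-(\lceil n/2\rceil-\alpha(n))=\lfloor n/2\rfloor=[n/2]$. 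Equivalently, writing $n=2m$ or $n=2m+1$, one must show $\nu\bigl(\sum_i\binom n{2i+1}(2i)^s\bigr)\ge s+m-\alpha(n)$.

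The key idea, following \cite{DS1}, is to pass to a generating-function / finite-difference description. One has $\sum_i\binom n{2i+1}(2i)^s=\tfrac12\sum_j\binom nj((j-1)^s-(-(j+1))^s)$ after pairing $j=2i+1$ against its role in $(1+x)^n$, or more directly one recognizes $\sum_i\binom n{2i+1}(2i)^s$ as a value of an $s$-th iterated difference operator applied to the function $j\mapsto$ (odd-indexed part of $(1+x)^n$) evaluated suitably. The clean route: the operator $\Delta^s$ (forward difference) applied $s$ times to any polynomial of degree $<s$ vanishes, and $\Delta^s$ applied to $j^s$ equals $s!$; more to the point, $\tfrac1{2^s}\sum_i\binom n{2i+1}(2i)^s$ equals, up to sign, the coefficient extraction that expresses it as $\sum_{t\ge s}c_t S(t,s)$-type combinations, and the $2$-adic valuation of Stirling numbers $S(t,s)$ together with $\nu(s!)=s-\alpha(s)$ yields the bound. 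I would organize this as: (i) reduce to the iterated-difference form; (ii) expand $(2i)^s$ via Stirling numbers of the second kind, $(2i)^s=\sum_{t}S(s,t)2^t (2i)_{\,t}$ (falling factorial), so that $\sum_i\binom n{2i+1}(2i)_{\,t}$ becomes a single binomial-type sum (absorbing the falling factorial into $\binom n{2i+1}$ shifts the index and produces $\binom{n-t}{\bullet}$ times a fixed factorial); (iii) collect valuations: $\nu(2^t)=t$, $\nu(S(s,t))\ge$ a known lower bound, and the surviving binomial sum $\sum_i\binom{n-t}{2i+1-t}$ (or similar) is, by the elementary identity $\sum_{i}\binom{m}{2i+1}=2^{m-1}$, divisible by $2^{\,n-t-1}$ up to small corrections, and combine.

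The main obstacle is step (iii): making the valuation bookkeeping tight enough to land exactly on $s-[n/2]$ rather than something weaker, because the crude estimates ($2^t$ from $(2i)^s$, $2^{n-t-1}$ from the binomial sum, $\nu(s!)$ from Stirling) must be added with the right constraints on $t$ and must survive division by $\nu(n!)=n-\alpha(n)$; in particular one has to track the $\alpha(n)$ cancellation carefully and verify the extreme cases ($s<n/2$, where $\Phi_n(s)$ need not be a $2$-adic integer at all, and the bound is a genuine lower bound possibly negative; and $s=0$). Since \cite[Theorem 3.4]{DS1} already carries out exactly this argument, for the purposes of this paper I would simply invoke it: restate their theorem in the normalization $\Phi_n(s)=\tfrac1{n!}\sum_i\binom n{2i+1}(2i)^s$ used here, check that the translation between their $T_n$-type sum and our $\Phi_n$ matches (a factor of $n!$ and a reindexing $j=2i+1$), and conclude $\nu(\Phi_n(s))\ge s-[n/2]$ directly. $\qed$
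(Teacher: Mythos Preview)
The paper does not prove this proposition at all: it simply introduces it as ``a restatement of \cite[Theorem 3.4]{DS1}'' and uses it as a black box. Your proposal ultimately does exactly the same thing---after the exploratory sketch, you explicitly fall back on invoking \cite[Theorem 3.4]{DS1} and checking that the normalizations match. So in the end your approach agrees with the paper's.

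That said, your intermediate sketch is not a proof and should not be presented as one. The reduction in your first paragraph is correct: writing $(2i)^s=2^s i^s$ shows the claim is equivalent to $\nu\bigl(\sum_i\binom n{2i+1}i^s\bigr)\ge\lceil n/2\rceil-\alpha(n)$. But the subsequent outline (pairing $j=2i+1$ with something, iterated differences, Stirling expansions, the binomial sum $\sum_i\binom{n-t}{2i+1-t}$) is too loose to be completed as written; you yourself flag step~(iii) as the obstacle, and indeed the crude term-by-term estimates you list do not combine to give the sharp bound $s-[n/2]$ without a more careful argument. If you want a self-contained proof rather than a citation, the cleanest route in this paper's language is via the polynomials $q_m$ of Definition~\ref{qdef} and Theorem~\ref{qthm2} (in the material after the bibliography), from which the inequality follows from the estimate $\nu(q_s(n-1))\ge s-\alpha(n)+1$ when combined with $\nu(n!)=n-\alpha(n)$; but that is not what the body of the paper does either.
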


The results in Tables \ref{t2} and \ref{t3} are discovered by having {\tt Maple} compute the numbers $\nu(P_n(2^{e-1}x+p))$ for many values of $x$.
We illustrate the process of discovery and proof with two examples.

Even though (almost) all the $\circ$'s in the tables are proved in Corollaries \ref{corsing} and \ref{cordbl}, we briefly sketch how one is discovered and proved.
We consider the box $(n,p)=(29,2)$, which contains a double $\circ$.
Indeed, $P_{29}$ turns out to have two zeros in (2 mod 8), one in 2 mod 16, and one in 10 mod 16. The likelihood of this is seen when {\tt Maple}
computes $\nu(P_{29}(16x+2))$ for consecutive values of $x$, and the values are 2, 3, 2, 4, 2, 3, 2, 5, 2,$\ldots$, which is the pattern of
$\nu(x-x_0)+2$, and a similar result is seen for $\nu(P_{29}(16x+10))$. To prove that $\nu(P_{29}(16x+10))=\nu(x-x_0)+2$ for some 2-adic integer $x_0$, it suffices to show, by the argument leading to
(\ref{newjkeq}), that
for all $j>0$ and $k\ge0$
\begin{equation}\label{59ineq}\nu\tbinom{16x+10}k+4-\nu(j)+\nu(\Phi_{29}(j+k))\ge2\end{equation}
with equality occurring for an odd number of pairs $(j,k)$.
We use here and later that $\nu\binom{2^t}j=t-\nu(j)$ for $1\le j\le2^t$.

Since Proposition \ref{DSprop} implies that
$\nu(\Phi_{29}(j+k))\ge j+k-14$,
strict inequality holds in (\ref{59ineq}) provided $j+k\ge17$. Thus we are reduced to a finite number of verifications, and {\tt Maple} shows that (\ref{59ineq}) holds with equality iff
$(j,k)=(4,10)$, $(12,2)$, or $(8,6)$.

Most\footnote{Several boxes in Table \ref{t3} with two horizontally-displaced $\bullet$'s have minimal $\nu(\binom{2^5x+p}k\binom{2^5}j\Phi_n(j+k))$ occurring an odd number of times but more than once; these do not seem to fit into an easily-proved general formula.} of the $\bullet$'s in the tables are zeros of $P_n$ in which some of the $(j,k)$-terms of (\ref{newjkeq}) have 2-exponent
smaller than that of their sum, and so more than just the 2-exponent of the terms must be considered.
We illustrate with the proof for a typical such case, the left dot in $(31,2)$. The sequence of $\nu(P_{31}(16x+2))$ is 7, 8, 7, 9, 7, 8, 7, 10, 7,$\ldots$, and so we wish to prove
\begin{equation}\label{7}\nu\bigl(\sum_{k\ge0}{\tbinom{16x+2}k}\sum_{j>0}{\tfrac1{2^d}\tbinom{2^{d+4}}j}\Phi_{31}(j+k)\bigr)=7\end{equation}
for all $x\in\Z$ and all $d\ge0$.
 {\tt Maple} verifies that
\begin{equation}\label{31ineq}\nu\tbinom{16x+2}k+4-\nu(j)+\nu(\Phi_{31}(j+k))\ge4\end{equation}
for all $j>0$, $k\ge0$ with $j+k\le 19$, which by Proposition \ref{DSprop} are the only values of $j$ and $k$ that we need to consider, and for $0\le x\le7$. There are many pairs $(j,k)$ for which this
value equals 4, 5, 6, and 7, and these combine in a complicated way to give 2-exponent 7 for the sum. {\tt Maple} can easily enough check this value for the sum,
but there are two things that must be considered in giving a proof valid for all integers $x$ and $d$.  For $(j,k)$-summands with $\nu=4$, the mod 16 value of the odd part of $\frac1{2^d}\binom{2^{4+d}}j$ for various values of $d$ must be taken into account, and, similarly, changing $x$ causes changes in  $\binom{16x+2}k$,
which are essential in proving that (\ref{7}) holds for all $x$ and $d$.

Similarly to Lemma \ref{lem1}, one easily proves
\begin{equation}\label{d+1}\nu\biggl(\frac1{2^{d+1}}\binom{2^{d+1+b}}j-\frac1{2^d}\binom{2^{d+b}}j\biggr)=2b+d-\lg(j-1)-\nu(j),\end{equation}
and so the odd factors for $d$ and $d+1$ are congruent mod $2^{b+d-\lg(j-1)}$. As we have $b=4$ and $\lg(j-1)\le 4$, these odd factors
will be congruent mod 16 provided $d\ge4$. Thus the validity of (\ref{7}) for $d\le4$, which is checked by {\tt Maple}, implies its validity for all $d$.
A similar argument shows that changing $x$ by a multiple of 8 changes (\ref{7}) by a multiple of $2^8$, and so verifying (\ref{7}) for $0\le x\le 7$
implies it for all $x$.

A similar analysis proves the blanks in Tables \ref{t2} and \ref{t3}, namely that there are no zeros in certain congruences.
We illustrate with the case $n=23$, right side of column 6. {\tt Maple} verifies that $\nu(P_{23}(16x+14))=4$ for $0\le x\le3$.
We will prove that
\begin{equation}\label{64}\nu(P_{23}(16x+64i+14)-P_{23}(16x+14))\ge5.\end{equation} These together imply that
$\nu(P_{23}(16x+14))=4$ for all integers $x$, and hence $P_{23}$ has no zeros in (14 mod 16).

We write
$$P_{23}(16x+14)=\sum\tbinom{16x+14}k\Phi_{23}(k).$$
Using Proposition \ref{DSprop}, we easily see that terms with $k>14$ have 2-exponent $\ge5$. Using {\tt Maple}, we see that all terms have
2-exponent $\ge2$. Similarly to the proof of \ref{lem1}, we can prove that for $0\le k\le14$
$$\nu\bigl(\tbinom{16x+64i+14}k-\tbinom{16x+14}k\bigr)\ge 3+\nu(i)+\nu\tbinom{16x+14}k,$$
implying (\ref{64}).

Using the ideas discussed in the above examples,
{\tt Maple} can systematically find and prove all the results in Tables \ref{t2} and \ref{t3}.
In each case, {\tt Maple} discovers the value of $c$ by computing a sequence of values of $\nu(P_n(2^{e-1}x+p))$. The following remarkable
formula, obtained by inspecting the $c$-values obtained in all cases,  gives the value of $c$ in every case of Table \ref{t2} and \ref{t3}.
\begin{equation}\label{cgen}\nu(P_n(z))=\sum\nu(z-z_i)-\nu([\tfrac{n-1}2]!)+\begin{cases}\nu([\tfrac{n+1}2])&n\equiv0,3\ (4)\text{ and }n+z\text{ even}\\
\min(15,2\nu(z-148))&n=21\\
\min(9,2\nu(z-19))&n=29\\
\min(8,2\nu(z-11))&n=45\\
\min(10,2\nu(z-3)&n=61\\
0&\text{otherwise,}\end{cases}\end{equation}
where the sum is taken over all the zeros $z_i$ of $P_n$.

We illustrate this formula with the example of $P_{29}(16x+10)$ considered above. Let $z=16x+10$. From Table \ref{t2}, we observe that $P_{29}$ has five zeros $z_i$ with $z_i$ odd, four with $z_i\equiv0$ mod 4, one with $z_i\equiv6$ mod 8, one with $z_i\equiv2$ mod 16, and one with $z_i=16x_0+10$ for some $x_0\in\zt$. The sum of $\nu(z-z_i)$ is $5\cdot0+4\cdot1+2+3+\nu((16x+10)-(16x_0+10))=13+\nu(x-x_0)$. Since $\nu([\frac{29-1}2]!)=11$, (\ref{cgen}) becomes $\nu(P_{29}(z))=\nu(x-x_0)+2$, consistent with the worked-out example above.

\section{Some proofs}\label{pfsec}
In this section, we prove Proposition \ref{per}, Theorem \ref{four}, and Theorem \ref{double}.
We will need the following  results related to $\nu(\Phi_n(k))$. One such result was stated earlier as Proposition \ref{DSprop}.

The next result is an extension of \cite[Prop 2.4]{partial}. The proof of the new part, the condition for equality, will appear at the end of this section.
\begin{prop}\label{qprop1} For any nonnegative integers $n$ and $k$,
$$\nu(\Phi_n(k))\ge0.$$
If $n=2^e+\Delta$, with $0\le\Delta<2^e$, then equality occurs here iff $\binom{2^{e-1}-1-[\Delta/2]}{k-\Delta}$ is odd.
\end{prop}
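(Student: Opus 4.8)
The plan is to reduce the equality assertion to a single congruence between ordinary binomial coefficients modulo $2$, by replacing $\Phi_n$ (mod $2$) with the Mahler coefficients of the integer-valued polynomial $g(x):=\binom{2x-n-1}{n-1}$; this is legitimate thanks to Proposition \ref{P0}. The inequality $\nu(\Phi_n(k))\ge0$ is already \cite[Prop 2.4]{partial} (and will in any case fall out of the argument below).

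I would first record the Mahler expansion $P_n(x)=\sum_{k\ge0}\Phi_n(k)\binom xk$, which is immediate from $(\ref{Pdef})$, $(\ref{phidef})$ and $\sum_k\binom xk t^k=(1+t)^x$; hence $\Phi_n(k)=\sum_{i=0}^k(-1)^{k-i}\binom ki P_n(i)$. By Proposition \ref{P0}, each $P_n(i)$ (for $i\ge0$ an integer) lies in $\zt$ and satisfies $P_n(i)\equiv\binom{2i-n-1}{n-1}\pmod2$. Consequently $\Phi_n(k)\in\zt$ (so $\nu(\Phi_n(k))\ge0$) and, comparing the two sums term by term, $\Phi_n(k)\equiv a_k\pmod2$, where $a_k=\sum_{i=0}^k(-1)^{k-i}\binom ki\binom{2i-n-1}{n-1}$ is the $k$th Mahler coefficient of $g$. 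Thus $\nu(\Phi_n(k))=0$ iff $a_k$ is odd, and the statement is now entirely about $g$.

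Next I would compute $a_k$ and reduce it mod $2$. Since $g(x)=[y^{n-1}](1+y)^{2x-n-1}=[y^{n-1}]\bigl((1+y)^2\bigr)^x(1+y)^{-(n+1)}$ and $\bigl((1+y)^2\bigr)^x=\bigl(1+(2y+y^2)\bigr)^x=\sum_k\binom xk(2y+y^2)^k$, matching Mahler coefficients gives
$$a_k=[y^{n-1}]\frac{(2y+y^2)^k}{(1+y)^{n+1}}=\bigl[y^{\,n-1-k}\bigr]\frac{(2+y)^k}{(1+y)^{n+1}}\in\Z$$
(the power series $(1+y)^{-(n+1)}$ lies in $\Z[[y]]$). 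Reducing this coefficient modulo $2$, using $(2+y)^k\equiv y^k$ and $(1+y)^{-(n+1)}=\sum_{j\ge0}\binom{n+j}{j}y^j$ in $\mathbb F_2[[y]]$, yields
$$a_k\equiv\bigl[y^{\,n-1-2k}\bigr]\sum_{j\ge0}\binom{n+j}{j}y^j=\binom{2n-1-2k}{\,n-1-2k\,}\pmod2,$$
read as $0$ when $n-1-2k<0$. So $\nu(\Phi_n(k))=0$ iff $\binom{2n-1-2k}{\,n-1-2k\,}$ is odd.

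Finally I would check the combinatorial equivalence, for $n=2^e+\Delta$ with $0\le\Delta<2^e$ (so $e\ge1$; the case $n=1$ is trivial), that $\binom{2n-1-2k}{n-1-2k}$ is odd iff $\binom{2^{e-1}-1-[\Delta/2]}{\,k-\Delta\,}$ is odd. Write $m=n-1-2k$, so $m\le n-1<2^{e+1}$, and $c=k-\Delta$; a short computation gives $m=\overline\Delta-2c$, where $\overline\Delta:=2^e-1-\Delta$ is the $e$-bit complement of $\Delta$, and one notes $[\overline\Delta/2]=2^{e-1}-1-[\Delta/2]$. Since $2n-1-2k=m+n$, the formula $\nu\binom{m+n}{m}=\alpha(m)+\alpha(n)-\alpha(m+n)$ (equivalently, Kummer's carry count) shows $\binom{2n-1-2k}{m}$ is odd iff $m\ge0$ and $m$ has binary support disjoint from that of $n$; as $m<2^{e+1}$ and $n$ has support $\{e\}\cup(\text{support of }\Delta)$, this amounts to $0\le m<2^e$ and $m\subseteq\overline\Delta$ (writing $u\subseteq v$ for ``$u$ is a submask of $v$''). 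Now $m\ge0\iff c\le[\overline\Delta/2]$, while the elementary fact that for integers $0\le t\le s$ one has ``$s-t\subseteq s$'' iff ``$t\subseteq s$'' (if $t\subseteq s$ then $s-t=s\oplus t$, and the converse follows by symmetry) gives $m\subseteq\overline\Delta\iff2c\subseteq\overline\Delta\iff c\subseteq[\overline\Delta/2]$. Hence $\binom{2n-1-2k}{n-1-2k}$ is odd iff $c\ge0$ and $c\subseteq2^{e-1}-1-[\Delta/2]$, i.e.\ iff $\binom{2^{e-1}-1-[\Delta/2]}{\,k-\Delta\,}$ is odd, as required. I expect this bit-level bookkeeping---in particular keeping track of the a priori bound $m<2^{e+1}$, which is exactly what forbids $m$ from having high bits disjoint from $n$---to be the only delicate point; everything preceding it is a formal power-series computation resting on Proposition \ref{P0}.
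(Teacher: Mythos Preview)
Your proof is correct and follows essentially the same strategy as the paper's own argument: both write $\Phi_n(k)=\sum_i(-1)^{k-i}\binom{k}{i}P_n(i)$ (equivalently, expand $(2i)^k$ in powers of $2i+1$) and then invoke Proposition~\ref{P0} to reduce modulo $2$, after which the task is a binomial-coefficient identity. Your execution of that identity differs slightly---you package the sum via the generating function $(1+y)^{2x-n-1}$ to obtain the closed form $a_k\equiv\binom{2n-1-2k}{n-1-2k}\pmod2$ and then finish with a Kummer/bit-mask argument, whereas the paper rewrites $\binom{2j-n-1}{n-1}$ directly and applies a Vandermonde convolution---but both routes are short and the underlying idea is the same.

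One small caveat: your parenthetical remark that the inequality $\nu(\Phi_n(k))\ge0$ ``will in any case fall out of the argument below'' is circular as stated, since Proposition~\ref{P0} is deduced in the paper from Proposition~\ref{per}, whose proof in Section~\ref{pfsec} uses precisely this inequality. You correctly cite \cite[Prop.~2.4]{partial} for the inequality, so the logic is sound; just be aware that the ``fall out'' observation cannot stand on its own. Also, your bit-mask step tacitly assumes $c\ge0$ when applying the ``$s-t\subseteq s\iff t\subseteq s$'' lemma; the case $c<0$ is immediate (then $m=\overline\Delta-2c>\overline\Delta$, so $m\not\subseteq\overline\Delta$), but it would be cleaner to say so explicitly.
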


The next result restates \cite[Props 2.5,2.6]{partial}.
\begin{prop}\label{refine}  Let $n$ and $k$ be nonnegative integers with $n>k$. Then $\nu(\Phi_n(k))\ge\a(n)-1-\a(k)$ with equality iff $\binom{n-1-k}k$ is odd.
Mod 4,
$$\sum_i\tbinom n{2i+1}i^k/(2^{n-1-2k}k!)\equiv\tbinom{n-1-k}k+\begin{cases}2\tbinom{n-1-k}{k-2}&\text{if $n-1$ and $k$ are even}\\
0&\text{otherwise.}\end{cases}$$
\end{prop}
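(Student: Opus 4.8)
The plan is to evaluate the normalized sum on the right of the displayed congruence in closed form as a finite sum over $m$, and to read both assertions off from it. Since $n>k$, expand $i^k=\sum_{m=0}^k S(k,m)\,i^{\underline m}$ with $i^{\underline m}=m!\tbinom im$, and insert the closed form
$$\sum_i\tbinom n{2i+1}\tbinom im=2^{n-1-2m}\tbinom{n-1-m}m,$$
which holds for every $m$ (both sides are $0$ once $2m\ge n$); it follows from the Fibonacci-polynomial generating function $\sum_m\tbinom{N-1-m}m z^m=(\a^N-\b^N)/\sqrt{1+4z}$ with $\a,\b=\tfrac12(1\pm\sqrt{1+4z})$, by taking the part of $(1+x)^N$ odd in $x$, dividing by $x$, and setting $x^2=1+y$, $z=y/4$. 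One thereby gets
$$\frac1{2^{n-1-2k}k!}\sum_i\tbinom n{2i+1}i^k=\sum_{m=0}^k 4^{k-m}\,\frac{m!\,S(k,m)}{k!}\,\tbinom{n-1-m}m,$$
and since $\Phi_n(k)$ equals $2^{n-1-k}k!/n!$ times this sum, one has $\nu(\Phi_n(k))=\a(n)-1-\a(k)+\nu(\text{the sum})$; so both parts of the Proposition reduce to statements about the right-hand side.

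Next I would expand $m!\,S(k,m)/k!=\sum 1/(c_1!\cdots c_m!)$, the sum over ordered $m$-tuples $(c_1,\dots,c_m)$ of positive integers with $c_1+\cdots+c_m=k$, so that the $m$-th term equals $\tbinom{n-1-m}m\sum_{(c_i)}\prod_i 4^{c_i-1}/c_i!$. Because $\nu(4^{c-1}/c!)=c-2+\a(c)\ge0$ for every $c\ge1$, each product lies in $\zt$, and
$$\nu\Bigl(\textstyle\prod_i 4^{c_i-1}/c_i!\Bigr)=k-2m+\textstyle\sum_i\a(c_i)\ \ge\ k-m$$
since each $\a(c_i)\ge1$. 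This already gives $\nu(\Phi_n(k))\ge\a(n)-1-\a(k)$ and shows that modulo $4$ only $m=k$ and $m=k-1$ survive. The $m=k$ term comes from the single tuple $(1,\dots,1)$ and is $\tbinom{n-1-k}k$; the $m=k-1$ term comes from the $k-1$ tuples with one $c_i=2$ and the rest $1$, each contributing $4/2!=2$, so it is $2(k-1)\tbinom{n-k}{k-1}$. Hence the normalized sum is $\equiv\tbinom{n-1-k}k+2(k-1)\tbinom{n-k}{k-1}\pmod4$, and reducing mod $2$ gives the assertion that equality in $\nu(\Phi_n(k))\ge\a(n)-1-\a(k)$ holds iff $\tbinom{n-1-k}k$ is odd.

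Finally I would bring $2(k-1)\tbinom{n-k}{k-1}$ into the stated form mod $4$ using Lucas' theorem. It is $\equiv2\pmod4$ exactly when $k$ is even and $\tbinom{n-k}{k-1}$ is odd; oddness of $\tbinom{n-k}{k-1}$ forces the units bit of $n-k$ to be $1$, i.e.\ $n$ odd, so the term vanishes mod $4$ unless $n-1$ and $k$ are both even. In that remaining case $n-k$ and $k-1$ are odd, and deleting their common units bit turns the binary-containment criterion for oddness of $\tbinom{n-k}{k-1}$ into that for oddness of $\tbinom{(n-1-k)/2}{(k-2)/2}$, hence (again by Lucas) of $\tbinom{n-1-k}{k-2}$; therefore $2(k-1)\tbinom{n-k}{k-1}\equiv2\tbinom{n-1-k}{k-2}\pmod4$, the correction term in the statement. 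The only substantive input is the closed form for $\sum_i\tbinom n{2i+1}\tbinom im$; everything afterwards is multinomial bookkeeping for $m!\,S(k,m)$ together with Lucas' theorem, and the final mod-$4$ matching of $2(k-1)\tbinom{n-k}{k-1}$ with $2\tbinom{n-1-k}{k-2}$ is the place that needs the most care.
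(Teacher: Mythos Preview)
Your argument is correct. The paper does not actually prove this proposition; it merely cites it from \cite{partial}, so there is no proof here to compare against. Your route---inserting the closed form $\sum_i\tbinom n{2i+1}\tbinom im=2^{n-1-2m}\tbinom{n-1-m}m$ into the Stirling expansion of $i^k$, then reading off the mod~$4$ residue from the multinomial expansion $m!\,S(k,m)/k!=\sum_{(c_i)}\prod 1/c_i!$---is self-contained and clean. The $2$-adic bound $\nu(4^{c-1}/c!)=c-2+\a(c)\ge0$ does the work of isolating the $m=k$ and $m=k-1$ terms, and your Lucas-theorem reduction of $2(k-1)\tbinom{n-k}{k-1}$ to $2\tbinom{n-1-k}{k-2}$ in the case $n$ odd, $k$ even is correct (the key point being that stripping the common units bit from $n-k$ and $k-1$, then doubling, preserves the binary-containment criterion).

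The cited reference \cite{partial} proves the result via a different device: polynomials $q_m(x)$ defined recursively so that $\sum_i i^m\tbinom{x+1}{2i+1}=2^{x-2m}q_m(x)$, with the $2$-adic analysis carried out on $q_m$. Your approach trades that recursion for the explicit binomial identity and the surjection-counting formula for $m!\,S(k,m)$, which makes the mod~$4$ computation more transparent.
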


The last result is an extension of \cite[Prop 2.3]{partial}. The proof of the new part  will appear near the end of this section.
\begin{prop}\label{stirowr} Mod $4$
$${\tfrac 1{n!}}\sum_i\tbinom{2n+\eps}{2i+b}i^k\equiv\begin{cases}S(k,n)+2nS(k,n-1)&\eps=0,\,b=0\\
(2n+1) S(k,n)+2(n+1)S(k,n-1)&\eps=1,\,b=0\\
S(k,n)+2(n+1)S(k,n-1)&\eps=1,\,b=1.
\end{cases}$$
Integrally
$${\tfrac 1{n!}}\sum_i\tbinom{2n}{2i+1}i^k=\sum_{d\ge0}2^{d+1}\tbinom{n+d}dS(k,n-1-d)/(2d+1)!!,$$
where $(2d+1)!!=\ds\prod_{i\le d}(2i+1)$.
\end{prop}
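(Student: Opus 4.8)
The final statement to prove is Proposition \ref{stirowr}, which has two parts: a mod-4 congruence for $\tfrac1{n!}\sum_i\binom{2n+\eps}{2i+b}i^k$ in three cases, and an exact integral identity for $\tfrac1{n!}\sum_i\binom{2n}{2i+1}i^k$. The excerpt tells us the mod-4 part is largely from \cite[Prop 2.3]{partial}; what is new is presumably the integral identity (and possibly one of the mod-4 cases), so I will focus the plan there, sketching the mod-4 part only briefly.

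\medskip

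The plan for the integral identity is to expand $i^k$ in the basis of falling factorials (equivalently binomial coefficients), since Stirling numbers of the second kind are exactly the change-of-basis coefficients: $i^k=\sum_{m\ge0}S(k,m)\,m!\,\binom im$. Substituting this into $\tfrac1{n!}\sum_i\binom{2n}{2i+1}i^k$ and interchanging the (finite) sums reduces the claim to the single identity
$$\frac1{n!}\sum_i\binom{2n}{2i+1}m!\binom im=\sum_{d\ge0}\frac{2^{d+1}}{(2d+1)!!}\binom{n+d}d[m=n-1-d],$$
i.e. to showing that $\tfrac{m!}{n!}\sum_i\binom{2n}{2i+1}\binom im$ vanishes unless $m=n-1-d$ for some $d\ge0$, in which case it equals $2^{d+1}\binom{n+d}d/(2d+1)!!$. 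First I would simplify $\sum_i\binom{2n}{2i+1}\binom im$: writing $\binom im=\binom im$ as a polynomial in $i$, this is a finite sum of the ``odd-index'' partial binomial sums $\sum_i\binom{2n}{2i+1}i^r$ for $r\le m$, but a cleaner route is to recognize $\sum_i\binom{2n}{2i+1}\binom im$ directly via a generating-function or Vandermonde-type argument. Concretely, $\sum_i\binom{2n}{2i+1}x^i$ is (up to a factor) $\tfrac1{2\sqrt x}((1+\sqrt x)^{2n}-(1-\sqrt x)^{2n})$; extracting the coefficient of $\binom im$ amounts to applying the finite-difference operator, which turns $(1\pm\sqrt x)^{2n}$ into something involving $\binom{2n}{2d+1}$-type terms. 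I expect this to collapse, after setting $m=n-1-d$, to the stated closed form with the double factorial in the denominator arising from $2^{d+1}/(2d+1)!!=\binom{2d+1}{d}^{-1}\cdot(\text{power of }2)$ bookkeeping.

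\medskip

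An alternative, and probably safer, approach for the integral identity is induction on $n$ together with the Pascal-type recursion $\binom{2n}{2i+1}=\binom{2n-1}{2i}+\binom{2n-1}{2i+1}$ and a further splitting of $\binom{2n-1}{\cdot}$, combined with the Stirling recursion $S(k,n)=S(k-1,n-1)+nS(k-1,n)$; but since the claimed identity has no $k$ on the right except through the constraint $m=n-1-d$, it is really an identity of polynomials in $i$ evaluated against $\binom{2n}{2i+1}$, so no induction on $k$ is needed and the generating-function route is more direct. I would also sanity-check small cases: $n=1$ gives $\tfrac1{1!}\sum_i\binom2{2i+1}i^k=2\cdot 0^k=2[k=0]$, and the RHS is $\sum_{d\ge0}2^{d+1}\binom{1+d}d S(k,-d)/(2d+1)!!=2S(k,0)=2[k=0]$, which matches; $n=2$ similarly. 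These checks pin down the normalization of the double factorial.

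\medskip

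For the mod-4 part, the strategy is the one already used in \cite{partial} for $\eps=b=0$: write $\binom{2n+\eps}{2i+b}$ in terms of $\binom{2n}{2i}$ (or $\binom{2n}{2i+1}$) plus correction terms that are divisible by $2$, using Pascal's identity once or twice, and reduce mod $4$ to combinations of $\tfrac1{n!}\sum_i\binom{2n}{2i}i^k\equiv S(k,n)+2nS(k,n-1)$ and $\tfrac1{n!}\sum_i\binom{2n}{2i+1}i^k$ reduced mod 4 (the latter obtainable from the integral identity just proved, keeping only $d=0,1$). The three cases $(\eps,b)=(0,0),(1,0),(1,1)$ then follow by assembling these pieces; the coefficients $2n$, $2(n+1)$, $(2n+1)$ come out of how the Pascal corrections interact with the leading Stirling terms. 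The main obstacle, and where I would spend the most care, is the generating-function extraction in the integral identity — making the coefficient-of-$\binom im$ computation rigorous and correctly tracking the powers of $2$ and the double factorial — rather than the mod-4 bookkeeping, which is routine given Proposition \ref{refine} and the prior work in \cite{partial}.
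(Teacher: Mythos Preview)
Your reduction is exactly the paper's: expand $i^k=\sum_m S(k,m)\,m!\binom im$, interchange sums, and reduce the integral identity to the single binomial lemma
\[
\sum_i\binom{2n}{2i+1}\binom{i}{n-1-d}=2^{2d+1}\binom{n+d}{2d+1},
\]
after which the factor $d!/(2d+1)!=1/(2^d(2d+1)!!)$ gives the stated form. The paper also notes that the mod-4 cases are taken from \cite[Prop~2.3]{partial}, so only this integral part is new, as you surmised.

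Where you diverge is in how you propose to prove that binomial lemma. You sketch a generating-function/coefficient-extraction argument from $\sum_i\binom{2n}{2i+1}x^i=\tfrac1{2\sqrt x}\bigl((1+\sqrt x)^{2n}-(1-\sqrt x)^{2n}\bigr)$; the paper instead proves it by Wilf--Zeilberger creative telescoping. Concretely, with $F(n,i)$ the summand divided by the claimed right-hand side, the Zeilberger algorithm produces the certificate
\[
G(n,i)=\frac{(2n+1-i)(2i+1)(i+d+1-n)}{(d+n+1)(n-i)(2i-2n-1)}\,F(n,i),
\]
one verifies $F(n+1,i)-F(n,i)=G(n,i+1)-G(n,i)$ symbolically, sums over $i$ to get $\sum_i F(n+1,i)=\sum_i F(n,i)$, and checks the base $n=d+1$. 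Your generating-function route is viable in principle, but the substitution $x=1+z$ leaves you extracting $[z^{n-1-d}]$ from $\tfrac1{2\sqrt{1+z}}\bigl((1+\sqrt{1+z})^{2n}-(1-\sqrt{1+z})^{2n}\bigr)$, which is not as immediate as you suggest; the WZ route buys a fully mechanical verification at the cost of an unilluminating certificate. If you pursue your approach, that coefficient extraction is the one step that actually needs to be written out rather than ``expected to collapse.''
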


Next we prove Proposition \ref{per}.
\begin{proof}[Proof of Proposition \ref{per}]
Let $e=\lg(n)$. We have
$$P_n(x+2^t)-P_n(x)={\tfrac1{n!}}\sum{\tbinom n{2i+1}}(2i+1)^x((2i+1)^{2^t}-1)=\sum_{k\ge0}\sum_{j>0}T_{k,j},$$
where
$T_{k,j}={\tbinom xk\tbinom{2^t}j}\Phi_n(k+j)$,
By Proposition \ref{qprop1},  $\nu(T_{k,j})\ge t-e+1$ if $j<2^e$, while if $j\ge2^e$,
by Proposition \ref{DSprop} we obtain
$$\nu(T_{k,j})\ge t-\nu(j)+j-[n/2]\ge t+2^e-e-[n/2]\ge t-e+1.$$\end{proof}

Now we present the proof of Theorem \ref{four}.
\begin{proof}[Proof of Theorem \ref{four}] Let $e\ge2$, $1\le\Delta\le4$, $0\le p<2^{e-1}$, $p_2$ its mod 2 reduction, and
$$c=\begin{cases}2&\text{if $(\Delta,p_2)=(3,0)$ or $(4,1)$, $e=2$}\\
1&\text{if $(\Delta,p_2)=(3,0)$ or $(4,1)$, $e>2$}\\
0&\text{otherwise.}\end{cases}$$
By Corollary \ref{D2cor}, the theorem will follow from $\nu(P_{2^e+\Delta}(p))\ge c$, and, for $d\ge0$,
\begin{equation}\label{WTS}\nu(P_{2^e+\Delta}(2^{e-1}(x+2^d)+p)-P_{2^e+\Delta}(2^{e-1}x+p))-d=c.\end{equation}

Proposition \ref{qprop1} implies $\nu(P_{2^e+\Delta}(p))\ge0$ and $P_{2^e+\Delta}(p)\equiv\sum\binom p k\binom{2^{e-1}-1-[\Delta/2]}{k-\Delta}$ mod 2,
which is easily seen to be 0 mod 2 if $(\Delta,p_2)=(3,0)$ or $(4,1)$. Showing $\nu(P_{2^e+\Delta}(p))\ge2$ when $e=2$ and $(\Delta,p_2)=(3,0)$ or $(4,1)$ is accomplished
by using \ref{DSprop} to eliminate all but some very small values of $k$ and checking these by direct computation.

Now we prove (\ref{WTS}).
The LHS  equals $\ds\sum_{k\ge0,\ j>0}T_{k,j}$, where
\begin{equation}\label{Tkj}T_{k,j}=\tbinom{2^{e-1}x+p}k\tfrac1{2^d}\tbinom{2^{d+e-1}}j\Phi_{2^e+\Delta}(j+k).\end{equation}

We first consider the case $e\ge3$.
 We will prove the six statements in Table \ref{t4}
 together with Claim \ref{cl1} and Claim \ref{cl2}, in which we assume $e\ge3$.

 \begin{diag}{Conclusions about $\nu(T_{k,j})$ when $e\ge3$}
\label{t4}
\begin{center}
\begin{tabular}{cc|cl}
$\Delta$&$p$&$\nu(T_{k,j})$&\quad equality iff\\
\hline
$1,2$&any&$\ge0$&$j=2^{e-1},\,k=0$\\
$3$&odd&$\ge0$&$j=2^{e-1},\,k=1$\\
$3$&$0\ (4)$&$\ge1$&$j=2^{e-1},\,k=0$\\
$3$&$2\ (4)$&$\ge1$&$j=2^{e-1},\,k=0,1,2$\\
$4$&even&$\ge0$&$j=2^{e-1},\,k=0$\\
$4$&odd&$\ge0$&$j=2^{e-1},\,k=0,1$\\
\hline
\end{tabular}
\end{center}
\end{diag}

\begin{claim}\label{cl1} If $\Delta=4$ and $p$ is odd, then $\nu(T_{k,j})=1$ iff $j=2^{e-1}$, $k=2$, and $p\equiv3\ (4)$, or $j=2^{e-2}$, $k\equiv0,1\ (4)$, and $\binom p k$ odd.
\end{claim}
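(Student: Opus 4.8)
The plan is to read $\nu(T_{k,j})$ off the product form $(\ref{Tkj})$. Since $0<j\le2^{d+e-1}$, one has $\nu\bigl(\tfrac1{2^d}\tbinom{2^{d+e-1}}j\bigr)=e-1-\nu(j)$, so
$$\nu(T_{k,j})=\nu\tbinom{2^{e-1}x+p}k+(e-1-\nu(j))+\nu(\Phi_{2^e+4}(j+k)).$$
First I would carry out a reduction using Proposition~\ref{DSprop} ($\nu(\Phi_{2^e+4}(s))\ge s-2^{e-1}-2$) in concert with the middle term: for $e\ge4$, any pair with $\nu(T_{k,j})\le1$ must have $j\in\{2^{e-2},2^{e-1}\}$. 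Indeed, if $\nu(j)\le e-3$ the middle term alone is $\ge2$; and if $\nu(j)\in\{e-2,e-1\}$ with $j\notin\{2^{e-2},2^{e-1}\}$, or $\nu(j)\ge e$, then $j$, hence $j+k$, is large enough that the last two terms already sum to more than $1$. The small case $e=3$ (that is, $n=12$) I would treat by directly computing the finitely many relevant values $\Phi_{12}(s)$, exactly as in the worked examples of Section~\ref{zerossec}.

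For $j=2^{e-1}$ the middle term is $0$, and $\nu(\Phi_{2^e+4}(2^{e-1}+k))\ge k-2$ forces $k\le3$. Table~\ref{t4} gives $\nu(T_{0,2^{e-1}})=\nu(T_{1,2^{e-1}})=0$ (using $p$ odd for $k=1$), so these never produce a $1$. For $k=2$, since $2^{e-1}x+p$ is odd and $\nu(2^{e-1}x+p-1)=\nu(p-1)$, we get $\nu\tbinom{2^{e-1}x+p}2=\nu(p-1)-1$, which is $0$ iff $p\equiv3\ (4)$ and $\ge1$ iff $p\equiv1\ (4)$; together with $\nu(\Phi_{2^e+4}(2^{e-1}+2))=1$ (see below) this gives $\nu(T_{2,2^{e-1}})=1\iff p\equiv3\ (4)$. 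For $k=3$: when $p\equiv1\ (4)$ we have $\nu\tbinom{2^{e-1}x+p}3=\nu(p-1)-1\ge1$, and when $p\equiv3\ (4)$ we have $\binom p3$ odd so $\nu\tbinom{2^{e-1}x+p}3=0$ but $\nu(\Phi_{2^e+4}(2^{e-1}+3))\ge2$ (see below); in either case $\nu(T_{3,2^{e-1}})\ge2$.

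For $j=2^{e-2}$ the middle term is $1$, so $\nu(T_{k,2^{e-2}})=1$ holds exactly when $\nu\tbinom{2^{e-1}x+p}k=0$ and $\nu(\Phi_{2^e+4}(2^{e-2}+k))=0$. By Proposition~\ref{qprop1} the latter is equivalent to $\tbinom{2^{e-1}-3}{\,2^{e-2}+k-4\,}$ being odd; since $2^{e-1}-3$ has binary digits precisely in positions $0,2,3,\dots,e-2$, this forces $2^{e-2}+k-4\le2^{e-1}-3$ (so $k<2^{e-1}$, and then Lucas gives $\nu\tbinom{2^{e-1}x+p}k=0\iff\binom pk$ odd) and is then equivalent to the $2$-bit of $k$ being $0$ (no carry arises, as $2^{e-2}-4\equiv0\ (4)$), i.e.\ $k\equiv0,1\ (4)$. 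Combining the three cases produces the asserted list.

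The step needing real care is the two exact $\Phi$-values above: Proposition~\ref{qprop1} only gives the ``$\ge1$'' halves, and the mod-$4$ clause of Proposition~\ref{refine} is useless here (its correction term drops out because $2^e+3$ is odd, and the remaining congruence is $0\equiv0$). The hard part will therefore be to push one $2$-adic digit further; I would do this with the integral identity of Proposition~\ref{stirowr} applied to $2^e+4=2(2^{e-1}+2)$, which gives
$$\Phi_{2^e+4}(k)=\frac{2^k(2^{e-1}+2)!}{(2^e+4)!}\sum_{d\ge0}2^{d+1}\tbinom{2^{e-1}+2+d}dS(k,2^{e-1}+1-d)/(2d+1)!!,$$
where the prefactor has $2$-adic valuation $k-2^{e-1}-2$ and every term of the series with $d\ge1$ is divisible by $4$. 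For $k=2^{e-1}+2$ the prefactor has valuation $0$ and the $d=0$ term is $2S(2^{e-1}+2,2^{e-1}+1)=2\tbinom{2^{e-1}+2}2$, which is twice an odd number, so $\nu(\Phi_{2^e+4}(2^{e-1}+2))=1$; for $k=2^{e-1}+3$ the prefactor has valuation $1$ and the series equals $2S(2^{e-1}+3,2^{e-1}+1)+(\text{terms divisible by }4)$, of valuation $\ge1$, so $\nu(\Phi_{2^e+4}(2^{e-1}+3))\ge2$. I expect this $\Phi$-bookkeeping, rather than the combinatorial reduction, to be where the real difficulty lies.
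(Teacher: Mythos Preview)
Your proof is correct and follows essentially the same route as the paper. Both reduce to $j\in\{2^{e-2},2^{e-1}\}$ via Proposition~\ref{DSprop}, handle $j=2^{e-2}$ with the oddness criterion of Proposition~\ref{qprop1} plus Lucas, and handle $j=2^{e-1}$ by determining $\Phi_{2^e+4}(2^{e-1}+k)\bmod 4$ from Proposition~\ref{stirowr}. The paper does the last step more tersely, simply asserting that the congruence (\ref{34}) carries over to $\Phi_{2^e+4}$ ``by \ref{stirowr}''; your explicit computation with the integral identity (the $d=0$ term $2S(2^{e-1}+2,2^{e-1}+1)=2\binom{2^{e-1}+2}{2}$ being twice an odd number, and the analogous bound for $k=3$) is exactly what that terse reference unpacks to. Your separate treatment of $e=3$ by direct computation of $\Phi_{12}$ is a reasonable precaution; the paper's reduction ``$V\gg0$ for $j=3\cdot2^{e-2}$'' is borderline there.
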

\begin{claim}\label{cl2} $\nu\bigl(\sum\binom{2^e+4}{2i+1}(2i)^{2^{e-1}}\bigr)=\nu\bigl(\sum\binom{2^e+4}{2i+1}(2i)^{2^{e-1}+1}\bigr)$ and their odd factors are both $\equiv3\ (4)$.
\end{claim}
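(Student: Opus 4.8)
\emph{Proof plan for Claim~\ref{cl2}.} Since $\sum_i\tbinom{2^e+4}{2i+1}(2i)^k=(2^e+4)!\,\Phi_{2^e+4}(k)$, the two quantities in the claim are $(2^e+4)!$ times $\Phi_{2^e+4}(2^{e-1})$ and $\Phi_{2^e+4}(2^{e-1}+1)$; as $(2^e+4)!$ contributes the same power of $2$ and the same odd part to both, it suffices (and is what is actually used in the proof of Theorem~\ref{four}) to show that $\Phi_{2^e+4}(2^{e-1})$ and $\Phi_{2^e+4}(2^{e-1}+1)$ are $2$-adic units that are both $\equiv3\pmod{4}$. The first point is immediate from Proposition~\ref{qprop1} with $\Delta=4$: there $\nu(\Phi_{2^e+4}(k))=0$ iff $\tbinom{2^{e-1}-3}{k-4}$ is odd, and for $k=2^{e-1}$ and $k=2^{e-1}+1$ these binomials are $\tbinom{2^{e-1}-3}{1}=2^{e-1}-3$ and $\tbinom{2^{e-1}-3}{0}=1$, both odd; hence the two sums both have valuation $\nu((2^e+4)!)$, which gives the equality asserted in the claim.

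For the congruences mod $4$ the plan is to use the integral identity of Proposition~\ref{stirowr} with $n=2^{e-1}+2$, which gives $\sum_i\tbinom{2^e+4}{2i+1}(2i)^k=2^k(2^{e-1}+2)!\,A_k$ with $A_k=\sum_{d\ge0}2^{d+1}\tbinom{2^{e-1}+2+d}{d}S(k,2^{e-1}+1-d)/(2d+1)!!$, and then to compute $A_k$ to just enough $2$-adic precision to pin down its odd part mod $4$. For $k=2^{e-1}$ I expect only the $d=1$ term to matter mod $16$: the $d=0$ term vanishes because $S(2^{e-1},2^{e-1}+1)=0$, the $d=2$ term is killed by $\nu\,S(2^{e-1},2^{e-1}-1)=\nu\tbinom{2^{e-1}}{2}=e-2$, and the terms with $d\ge3$ by $\nu\ge d+1$; this leaves $A_{2^{e-1}}\equiv 4(2^{e-1}+3)/3\equiv4\pmod{16}$. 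For $k=2^{e-1}+1$ I expect only the $d=0$ term to matter mod $8$ (the $d=1$ term being killed by $\nu\,S(2^{e-1}+1,2^{e-1})=\nu\tbinom{2^{e-1}+1}{2}=e-2\ge1$), leaving $A_{2^{e-1}+1}\equiv 2S(2^{e-1}+1,2^{e-1}+1)=2\pmod{8}$. Either way $\U(A_k)\equiv1\pmod{4}$.

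The last step would be to divide by $(2^e+4)!$ and simplify. Writing $(2^{e-1}+2)!=(2^{e-1})!\,(2^{e-1}+1)(2^{e-1}+2)$ and $(2^e+4)!=(2^e)!\,(2^e+1)(2^e+2)(2^e+3)(2^e+4)$, and noting $2^{e-1}+2=2(2^{e-2}+1)$ together with $(2^e+2)(2^e+4)=8(2^{e-1}+1)(2^{e-2}+1)$, the factors $2^{e-1}+1$ and $2^{e-2}+1$ cancel and one is left with $\Phi_{2^e+4}(k)=2^{k-2}(2^{e-1})!\,A_k/\bigl((2^e)!\,(2^e+1)(2^e+3)\bigr)$. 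Taking odd parts and using that $(2^e)!=2^{2^{e-1}}(2^{e-1})!\prod_{j\le 2^e,\ j\text{ odd}}j$, where the $2^{e-1}$ odd integers $\le2^e$ split equally between residues $1$ and $3$ mod $4$ (so their product is $\equiv3^{2^{e-2}}\equiv1\pmod{4}$ for $e\ge3$, i.e.\ $\U((2^e)!)\equiv\U((2^{e-1})!)\pmod{4}$), together with $(2^e+1)(2^e+3)\equiv3\pmod{4}$ and $\U(A_k)\equiv1\pmod{4}$, one lands on $\Phi_{2^e+4}(k)\equiv1/3\equiv3\pmod{4}$ for both $k=2^{e-1}$ and $k=2^{e-1}+1$, as required.

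The main obstacle I anticipate is the $2$-adic bookkeeping rather than any one hard idea. Proposition~\ref{refine} controls these sums only mod $4$, which is too coarse for $k=2^{e-1}+1$, where the odd part of the sum is determined mod $4$ only after one knows the sum to one higher power of $2$; this is why one must pass to the integral form of Proposition~\ref{stirowr}. The other delicate point is that the twofold cancellation of the factors $2^{e-1}+1$ and $2^{e-2}+1$ is precisely what keeps $e=3$ from being exceptional and makes the argument uniform in $e\ge3$, so lining that cancellation up correctly is where a slip is most likely.
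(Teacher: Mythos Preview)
Your argument is correct and essentially parallel to the paper's one-line hint, which says to use Proposition~\ref{qprop1} for the valuation, Proposition~\ref{refine} for the sum with exponent $2^{e-1}$, and Proposition~\ref{stirowr} for the sum with exponent $2^{e-1}+1$. You instead use the integral identity in Proposition~\ref{stirowr} for \emph{both} sums, which is a perfectly good (and arguably more uniform) route: the $d=1$ term handles the first case just as cleanly as Proposition~\ref{refine} would. Your factor-cancellation $(2^{e-1}+1)(2^{e-2}+1)$ and the computation $\U(A_k)\equiv1\pmod 4$ are both right, and your use of the product of odd residues mod $2^e$ to get $\U((2^e)!)\equiv\U((2^{e-1})!)\pmod 4$ is a clean substitute for Lemma~\ref{oddlem}.

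One small caveat on your opening reduction. You write that showing $\Phi_{2^e+4}(2^{e-1})\equiv\Phi_{2^e+4}(2^{e-1}+1)\equiv3\pmod 4$ ``suffices'' for the claim as stated. That is not quite true at $e=3$: since $\U((2^e+4)!)\equiv(2^e+1)(2^{e-1}+1)(2^e+3)(2^{e-2}+1)\cdot\U((2^e)!)\pmod 4$, the factor $2^{e-2}+1$ equals $3$ when $e=3$, giving $\U(12!)\equiv3\pmod 4$, so the odd parts of the raw sums are then $\equiv 1\pmod 4$, not $3$. (Direct computation confirms $\sum\binom{12}{2i+1}(2i)^4=2^{10}\cdot2201$ with $2201\equiv1\pmod4$.) In other words the claim as literally worded is off at $e=3$; your $\Phi$-version is the statement that is actually used in the proof of Theorem~\ref{four} via the $T_{k,j}$, and that is exactly what you establish.
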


One easily checks that this implies the result. For example, if $\Delta=3$ and $p$ is even, the table says that $\nu(\sum T_{k,j})$ is determined by exactly 1 or 3 terms having $\nu=1$.
The only place where a little argument is required is the case $\Delta=4$, $p$ odd. In this case, mod $4$, we get
$(p+1)$ from $j=2^{e-1}$, $k=0,1$, and also get $2$ if $p\equiv3\ (4)$. The sum of these is $2$.
Here we have used Claims \ref{cl2} and \ref{cl1} and the fact that if $p$ is odd and $k\equiv0$ mod 4, then $\binom p k\equiv\binom p {k+1}$ mod 2.

We have $$V:=\nu(T_{j,k})=\nu\tbinom p k+e-1-\nu(j)+\nu(\Phi_{2^e+\Delta}(j+k)).$$ This is the quantity in the third column of Table \ref{t4}.
Now we verify the claims of Table 1 and Claims \ref{cl1} and \ref{cl2}.

By Proposition \ref{DSprop},  if $\nu(j)>e-1$ or $j=3\cdot 2^{e-2}$, then $V>\!>0$, and so we need not consider
these cases when looking for $V=0$ or $V=1$. By Proposition \ref{qprop1},  we deduce that $V\ge0$ with equality iff $j=2^{e-1}$, $\binom p k$ odd,
and $\binom{2^{e-1}-1-[\Delta/2]}{2^{e-1}+k-\Delta}$ odd. The latter condition is equivalent to $\binom{\Delta-1-k}{[\Delta/2]}$ odd, which, incorporating also $\binom p k$ odd, comprises the pairs $(\Delta,k)=(1,0)$,
$(2,0)$, $(3,1)$, $(4,0)$, and $(4,1)$. This implies all of our claims regarding when $V=0$.

Now let $(\Delta,p_2)=(3,0)$. If $j=2^{e-2}$, then $V=1$ iff $\binom pk$ is odd and (using Proposition \ref{qprop1}) $\binom{2^{e-1}-2}{2^{e-2}+k-3}$ is odd, which
is impossible. If $j=2^{e-1}$, then $V=1$ if $\nu\binom p k=1$ and $k=1$ or if $\binom p k$ is odd and $\Phi_{2^e+3}(2^{e-1}+k)\equiv2\ (4)$.
By Proposition \ref{refine}, $\Phi_{2^e+3}(2^{e-1})\equiv2\ (4)$, and by Proposition \ref{stirowr} \begin{equation}\label{34}\Phi_{2^e+3}(2^{e-1}+k)\equiv\begin{cases}2&k=2\\ 0&k>2\end{cases} \mod 4,\end{equation}
implying our claims in this case.

Finally let $(\Delta,p_2)=(4,1)$. Claim \ref{cl2} is proved using Proposition \ref{qprop1}, and \ref{refine} for the first sum and \ref{stirowr} for the second.
If $j=2^{e-2}$, then $V=1$ iff $\binom p k$ and $\binom{2^{e-1}-3}{2^{e-2}+k-4}$ are odd, and this happens in the asserted situations.
 If $j=2^{e-1}$, then $V=1$ iff $\binom p k\equiv2\ (4)$ and $k=0,1$, which is impossible (since $p$ is odd), or if
 $\binom p k$ is odd and $\Phi_{2^e+4}(2^{e-1}+k)\equiv2\ (4)$. Since (\ref{34}) is also true for $\Phi_{2^e+4}$ by \ref{stirowr}, Claim \ref{cl1} is clear.

 The main difference when $e=2$ is that $j=2^e$ can play a significant role. When $e>2$, Proposition \ref{DSprop} implied that $\nu(\Phi_{2^e+\Delta}(j+k))$ would be too large to have an effect.

 The case $e=2$ involves $P_5$, $P_6$, $P_7$, and $P_8$. The result for them was part of \cite[Theorem 2.1]{partial}, although detailed proofs were
 not presented there for $P_7$ and $P_8$. As when $e>2$, the argument considers the terms $T_{k,j}$ of (\ref{Tkj}).  We illustrate with the case
 $e=2$, $\Delta=3$, $p=0$, $d\ge1$. We wish to prove that $\nu(\sum T_{k,j})=2$, where
 $T_{k,j}=\tbinom{2x}k\frac1{2^d}\tbinom{2^{d+1}}j\Phi_7(j+k)$,
 $k\ge0$, $j>0$. We can use Proposition \ref{DSprop} to eliminate large values of $j+k$. If $x\equiv0$ mod 4, the only terms that are nonzero mod $8$ occur when $k=0$ and $j=1$, $3$, and $4$.
 These three terms have 2-exponents $2$, $1$, and $1$, respectively, and the sum of the last two is divisible by $8$.

 If $x\equiv2$ mod 4, or $x$ odd, several additional terms are involved, but the same conclusion is obtained.
\end{proof}

\begin{proof}[Proof of Theorem \ref{double}]

The inequality in (\ref{alph}) follows easily from Proposition \ref{refine} and \cite[Lemma 2.40]{partial} (and Proposition \ref{DSprop} to handle $j=2^e$). These results also imply that, if $\eps=0=\delta$, equality is obtained in (\ref{alph})
iff $k=p$, $\binom{n-1-j-k}{j+k}$ is odd, and $j=2^e-2^h$ with $2^h>k$. Thus the case $\eps=0=\delta$ of the theorem follows from the following lemma.
\begin{lem} If $\binom{n-1-p}p$ is odd, $e=\lg(n-1)$, $p<2^h<2^e$, and $p<[(n-3\cdot2^{e-1})/2]$, then
$$\binom{n-1-p-2^e+2^h}{p+2^e-2^h}\text{ is odd iff }h=\lg(2^{e+1}-n+p).$$\label{lemlem}
\end{lem}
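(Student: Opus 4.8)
The plan is to reduce everything to counting carries in base-2 addition via the formula $\nu\binom mn=\a(n)+\a(m-n)-\a(m)$, which the paper has already set up for use without comment. Write $N=n-1$, so the hypotheses give $\a(N-p)+\a(p)=\a(N)$ (oddness of $\binom{N-p}{p}=\binom{n-1-p}{p}$), i.e.\ the addition $p+(N-p)=N$ is carry-free. Since $e=\lg(N)$ we have $2^e\le N<2^{e+1}$, and the constraint $p<[(n-3\cdot2^{e-1})/2]$ forces $p$ to be quite small relative to $2^e$; in particular $N-p>3\cdot2^{e-1}$, so the binary expansion of $N-p$ has both its $2^e$-bit and its $2^{e-1}$-bit equal to $1$. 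I would first record these bit-level facts precisely: $N-p$ has $1$'s in positions $e$ and $e-1$, and $p$ (being carry-free against $N-p$) has $0$'s there, and moreover $p<2^h<2^e$.

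The second step is to rewrite the two binomial coefficients in question in terms of carry-free conditions. Set $\ell=\lg(2^{e+1}-n+p)=\lg(2^e+2^e-N+p-1)$; since $N-p<2^{e+1}$ and $N-p>3\cdot2^{e-1}$, the quantity $2^{e+1}-(N-p)$ lies strictly between $0$ and $2^{e-1}$, so $\ell<e-1$, and I would note that $\ell$ is exactly the position of the lowest $1$-bit of $N-p$ above position $0$ that "obstructs" the relevant subtraction — more precisely, $\ell$ is determined by the run of low bits of $N-p$. The coefficient $\binom{N-p-2^e+2^h}{p+2^e-2^h}$ is odd iff the addition $(p+2^e-2^h)+(N-p-2^e+2^h)=N$ is carry-free. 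So the lemma becomes the purely combinatorial assertion: given that $p+(N-p)=N$ is carry-free with $p<2^h<2^e$ and $N-p$ having the bit pattern above, the perturbed addition $(p+2^e-2^h)+(N-p-2^e+2^h)$ is carry-free if and only if $2^h=2^{e+1}-n+p$'s "leading block," i.e.\ $h=\ell$.

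The third step is the actual bit-chase, split into the two directions. For "if," substitute $h=\ell$: then $2^e-2^h=2^e-2^\ell$ has $1$'s exactly in positions $\ell,\ell+1,\dots,e-1$. Adding this to $p$: since $p<2^h=2^\ell$, the sum $p+2^e-2^\ell$ is just the concatenation — no carries, and it has $1$'s in positions $\ell,\dots,e-1$ plus whatever $p$ contributes below $\ell$. Subtracting $2^e-2^\ell$ from $N-p$: by the choice of $\ell$, $N-p$ has $1$'s in positions $e,e-1$ and the subtraction of the block $2^e-2^\ell$ borrows in a controlled way, turning the $2^e$-bit and the block $e-1,\dots,\ell$ into exactly the complementary pattern; one checks $N-p-2^e+2^\ell$ then has a $0$ wherever $p+2^e-2^\ell$ has a $1$ in positions $\ge\ell$, and below $\ell$ it still agrees with $N-p$, which was carry-free against $p$. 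Hence the perturbed sum is carry-free. For "only if," suppose $h\ne\ell$. If $h>\ell$ then $2^e-2^h$ fails to cancel some $1$-bit of $N-p$ in positions between $\ell$ and $h-1$, forcing a carry when those bits meet the bits of $p+2^e-2^h$; if $h<\ell$ then $p+2^e-2^h$ acquires a $1$ in some position $<\ell$ where $N-p-2^e+2^h$ also has a $1$ (inherited from $N-p$'s low bits, which by definition of $\ell$ are all $1$ up through position $\ell-1$... adjust as the actual expansion dictates), again forcing a carry. Either way $\binom{N-p-2^e+2^h}{p+2^e-2^h}$ is even.

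The main obstacle I anticipate is pinning down exactly which bits of $N-p$ lie in positions $0,\dots,e-1$ and getting the borrow propagation in "$N-p-2^e+2^h$" completely correct — this is where an off-by-one in the definition of $\ell$ (is it the length of the trailing $1$-run, the trailing $0$-run, or the position of a specific bit of $2^{e+1}-(N-p)$?) would break the argument. I would handle this by working throughout with $M:=2^{e+1}-(N-p)=2^{e+1}-n+p+1$, noting $\lg(M-1)=\ell$ and $0<M<2^{e-1}$, expressing $N-p=2^{e+1}-M$, and then translating both subtractions into additions with $M$, where the carry-free bookkeeping is cleaner; the hypothesis $p<[(n-3\cdot2^{e-1})/2]$ is precisely what guarantees $M<2^{e-1}$ and hence $\ell\le e-2$, keeping the perturbation $2^e-2^h$ from interacting with anything above position $e$. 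Once the bit patterns are fixed, both directions are a finite carry-analysis; I expect no analytic input beyond the $\nu$-of-binomial formula.
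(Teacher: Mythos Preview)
Your carry-free criterion is misstated, and this breaks the argument from the start. You claim that $\binom{N-p}{p}$ is odd iff $\a(N-p)+\a(p)=\a(N)$, i.e.\ iff the addition $p+(N-p)=N$ is carry-free. But the formula $\nu\binom mn=\a(n)+\a(m-n)-\a(m)$ with $m=N-p$, $n=p$ gives $\nu\binom{N-p}{p}=\a(p)+\a(N-2p)-\a(N-p)$: oddness means $p+(N-2p)=N-p$ is carry-free, equivalently the bits of $p$ form a \emph{subset} of the bits of $N-p$. Your condition says instead that the bits of $p$ and $N-p$ are \emph{disjoint}, which is the criterion for $\binom{N}{p}$ odd, not $\binom{N-p}{p}$. (Quick check: $N-p=3$, $p=1$ gives $\binom 31=3$ odd, yet $1+3$ has a carry.) You make the same slip for $\binom{A}{B}$ with $A=N-p-2^e+2^h$ and $B=p+2^e-2^h$: oddness means the bits of $B$ lie inside the bits of $A$, not that $A$ and $B$ have disjoint bits summing to $N$. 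Consequently your claim that ``$N-p-2^e+2^\ell$ then has a $0$ wherever $p+2^e-2^\ell$ has a $1$'' is the wrong thing to check (and is in fact false: both have $1$'s in positions $\ell,\dots,e-1$).

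The paper's argument avoids any lengthy bit-chase. With $\ell=\lg(2^{e+1}-n+p)$: if $h<\ell$ then one checks directly that $A<B$, so $\binom AB=0$; if $h>\ell$ then $2^e\le A<3\cdot 2^{e-1}$ and $2^{e-1}\le B<2^e$, so $A$ has a $0$ in position $e-1$ while $B$ has a $1$ there, forcing $\binom AB$ even; if $h=\ell$ then $n-1-p=2(2^e-2^h)+L$ with $0\le L<2^h$ and $\binom Lp$ odd (this is what $\binom{n-1-p}{p}$ odd actually gives, since $p<2^h$), whence $A=2^e-2^h+L$, $B=2^e-2^h+p$, and $\binom AB$ is odd because the bits of $p$ sit inside those of $L$. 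Your approach could be salvaged once the correct submask criterion is in place, but the resulting bit analysis would essentially reconstruct these three short observations.
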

\begin{proof} Let $\ell=\lg(2^{e+1}-n+p)$, $A=n-1-p-2^e+2^h$, and $B=p+2^e-2^h$. If $h<\ell$, then $2^{h+1}\le2^{e+1}-n+p$, from which is follows that $A<B$, and hence $\binom AB=0$.

If $h>\ell$, it follows that $2^e\le A<3\cdot2^{e-1}$ and $2^{e-1}\le B<2^e$, so $\binom AB$ is even, due to the $2^{e-1}$ position.  If $h=\ell$, it is immediate that $n-1-p=2(2^e-2^h)+L$ with $0\le L<2^h$ and $\binom Lp$ odd. This implies that $\binom AB=\binom{2^e-2^h+L}{2^e-2^h+p}$ is odd.
\end{proof}

If $\eps=1$ and $\delta=0$, the above methods together with \cite[Lemma 2.44]{partial} show that equality in (\ref{alph}) is obtained only for $(j,k)=(2^e-2^\ell,p)$, as claimed.

Now let $\delta=1$ and $\eps=0$. If $k\le p$, the $\delta=0$ analysis applies to give the $(j,k)=(2^e-2^\ell,p)$ solution.
If $k=2^{e-1}+\Delta$ with $\Delta\ge0$, then analysis similar to that performed above implies that equality is obtained in (\ref{alph}) iff
$\Delta=p$, $\phi(j,2^{e-1}+\Delta)=e-1$, where $\phi$ is as in \cite[Lemma 2.40]{partial}, and $\binom{n-1-j-2^{e-1}-\Delta}{j+2^{e-1}+\Delta}$ is odd with $2(j+2^{e-1}+\Delta)<n$.
Part 3 of \cite[Lemma 2.44]{partial} (with its $e-1$ corresponding to our $e$) gives two possibilities for $\phi=e-1$.  The first one does not satisfy $2(j+2^{e-1}+\Delta)<n$.
The second one reduces to $j=2^{e-1}-2^h$ with $\Delta<2^h$. Since $\Delta=p$, the required oddness of the binomial coefficient becomes exactly the condition of Lemma \ref{lemlem},
and so we obtain that $h$ equals the $\ell$ of our Theorem \ref{double}.

If $p<k<2^{e-1}$, the condition for equality in (\ref{alph}) becomes $\a(2^{e-1}+p-k)=1$, $\binom{n-1-j-k}{j+k}$ odd with $2(j+k)<n$, and $\phi(j,k)=e$. The first of these says $k=2^{e-1}+p-2^h$ with $h<e-1$. By part 2 of \cite[Lemma 2.44]{partial}, $j=2^e-2^t$ with $2^t>2^{e-1}+p-2^h$, which implies $t=e-1$ since $h<e-1$. Thus $j=2^{e-1}$, and the odd binomial coefficient is again handled by Lemma \ref{lemlem}, implying that $h$ equals the $\ell$ of the theorem.

The case $\delta=1=\eps$ is established using the same methods.
\end{proof}

\begin{proof}[Proof of last part of Proposition \ref{stirowr}] We extend the proof in \cite{partial}.  We have
$$ \sum_i\tbinom{2n}{2i+1}i^k
=\sum_\ell C_{2n,\ell,1} \ell! S(k,\ell),$$
where $C_{2n,\ell,1}={\displaystyle\sum\limits_{i}\tbinom {2n}{2i+1}\tbinom i\ell}$.
We will prove the possibly new result
\begin{equation}\label{new?}\sum\tbinom{2n}{2i+1}\tbinom i{n-d-1}=2^{2d+1}\tbinom{n+d}{2d+1}.\end{equation}
Then, using (\ref{new?}) at the second step,
\begin{eqnarray*}{\tfrac1{n!}}\sum\tbinom{2n}{2i+1}i^k&=&{\tfrac1{n!}}\sum_{d\ge0}C_{2n,n-d-1,1}(n-d-1)!S(k,n-d-1)\\
&=&{\tfrac1{n!}}\sum_d2^{2d+1}\tbinom{n+d}{2d+1}(n-d-1)!S(k,n-d-1)\\
&=&\sum_d2^{2d+1}\tfrac{(n+d)!}{n!(2d+1)!}S(k,n-d-1),\end{eqnarray*}
and the result follows since $\frac{d!}{(2d+1)!}=1/(2^d(2d+1)!!)$.

We prove (\ref{new?}) with help from \cite{AB} and the associated software. Let $d$ be fixed, and
$$F(n,i)=\frac{\binom{2n}{2i+1}\binom i{n-d-1}}{2^{2d+1}\binom{n+d}{2d+1}}.$$
We will show that \begin{equation}\label{sumseq}\sum_iF(n+1,i)=\sum_iF(n,i).\end{equation}
Since $\sum_iF(d+1,i)=1$, this implies that $\sum_iF(n,i)=1$ for all $n$, our desired result.

To prove (\ref{sumseq}), let
$$G(n,i)=\frac{(2n+1-i)(2i+1)(i+d+1-n)}{(d+n+1)(n-i)(-2n+2i-1)}F(n,i).$$
(This is what was discovered by the software.) Then one can verify
$$F(n+1,i)-F(n,i)=G(n,i+1)-G(n,i).$$
When summed over $i$, the RHS equals 0, implying (\ref{sumseq}).
\end{proof}

\begin{proof}[Proof of second part of Proposition \ref{qprop1}] We want to know when $\Phi_{2^e+\Delta}(k)$ is odd.
We expand $(2i)^k$ as $\sum(-1)^j\binom kj(2i+1)^j$. Using Proposition \ref{P0}, we are reduced to proving, when $\Delta=2d+1$,
\begin{equation}\label{bcs}\binom{2^{e-1}-1-d}{k-2d-1}\equiv\sum\binom kj\binom{j-2^{e-1}-d-1}{2^{e-1}+d}\mod 2\end{equation}
and a similar result when $\Delta=2d$. If $k<2^{e-1}+d+1$, the RHS equals
$$\sum[x^j](1+x)^k\cdot[x^{2^{e-1}+d-j}](1+x)^{-2^{e-1}-d-1}=\binom{-2^{e-1}-d-1+k}{2^{e-1}+d}.$$
Thus both sides of (\ref{bcs}) are odd iff the binary expansions of $d$ and $k-2d-1$ never have 1's in the same position.
If $k\ge2^{e-1}+d+1$, the LHS of (\ref{bcs}) is 0. To evaluate the RHS, note that $\binom{j-2^{e-1}-d-1}{2^{e-1}+d}\equiv\binom{j-2^eA-2^{e-1}-d-1}{2^{e-1}+d}$ mod 2.
Choose $2^eA$ so that $k-2^eA-2^{e-1}-d-1<0$. Then the RHS becomes $\binom{-2^{e-1}-d-1+k}{2^eA+2^{e-1}+d}=0$.\end{proof}

\section{Proof of Theorem \ref{Delthm}}\label{limsec}
The proof of Theorem \ref{Delthm} will be aided by two lemmas.

\begin{lem}\label{lem1} For $0<d<2^e$,
$$\frac{2^{2^e+d-1-\a(d)}}{d!2^e!}\equiv \frac{2^{2^e+d-1-\a(d)}}{(2^e+d)!}\mod 2^{e-\lg(d)}.$$
\end{lem}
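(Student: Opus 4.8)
The plan is to reduce the statement to a congruence between the odd parts of the factorials involved, and then to expand $(2^e+d)!$ directly. First I would record the relevant valuations: since $d<2^e$, we have $\nu(d!)=d-\a(d)$, $\nu(2^e!)=2^e-1$, and $\nu((2^e+d)!)=2^e+d-\a(2^e+d)=2^e+d-1-\a(d)$ (the last equality because $\a(2^e+d)=1+\a(d)$ when $d<2^e$). Thus in all three cases the exponent of $2$ in the numerator equals the $2$-adic valuation of the corresponding factorial, so in $\zt$ we have $\frac{2^{2^e+d-1-\a(d)}}{d!\,2^e!}=\frac1{\U(d!)\,\U(2^e!)}$ and $\frac{2^{2^e+d-1-\a(d)}}{(2^e+d)!}=\frac1{\U((2^e+d)!)}$. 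Since $u\equiv v\pmod{2^m}$ for odd $u,v$ forces $u^{-1}\equiv v^{-1}\pmod{2^m}$ (as $u^{-1}-v^{-1}=(v-u)(uv)^{-1}$ and $uv$ is a $2$-adic unit), the lemma is equivalent to
$$\U((2^e+d)!)\equiv\U(d!)\,\U(2^e!)\pmod{2^{e-\lg(d)}}.$$

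Next I would expand $(2^e+d)!=2^e!\cdot\prod_{j=1}^d(2^e+j)$, so that $\U((2^e+d)!)=\U(2^e!)\prod_{j=1}^d\U(2^e+j)$. For $1\le j\le d<2^e$ we have $\nu(2^e+j)=\nu(j)$, hence $\U(2^e+j)=(2^e+j)/2^{\nu(j)}=\U(j)+2^{e-\nu(j)}$. Because $\nu(j)\le\lg(j)\le\lg(d)$ and $\lg(d)\le e-1$ (as $d<2^e$), we get $e-\nu(j)\ge e-\lg(d)\ge1$, so $\U(2^e+j)\equiv\U(j)\pmod{2^{e-\lg(d)}}$. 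Multiplying these $d$ congruences of odd integers—via the telescoping identity $\prod a_j-\prod b_j=\sum_k(b_1\cdots b_{k-1})(a_k-b_k)(a_{k+1}\cdots a_d)$, each summand of which is divisible by $2^{e-\lg(d)}$—yields $\prod_{j=1}^d\U(2^e+j)\equiv\prod_{j=1}^d\U(j)=\U(d!)\pmod{2^{e-\lg(d)}}$. Finally, multiplying through by the odd integer $\U(2^e!)$ gives the displayed congruence, which is equivalent to the lemma.

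There is no serious obstacle here; the whole argument is routine $2$-adic bookkeeping. The two points that require a little care are (i) verifying that the numerator exponents really equal $\nu(d!\,2^e!)$ and $\nu((2^e+d)!)$, which uses $d<2^e$ crucially both to obtain $\nu(2^e+j)=\nu(j)$ for $j\le d$ and to obtain $\a(2^e+d)=1+\a(d)$, and (ii) checking $e-\lg(d)\ge1$ so the per-factor congruences are nontrivial and survive taking products. The hypothesis $0<d<2^e$ is exactly what guarantees both.
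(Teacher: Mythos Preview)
Your proof is correct. Both you and the paper reduce to comparing $(2^e+d)!/2^e!$ with $d!$, but the arguments diverge at that point. The paper expands $\prod_{j=1}^d(2^e+j)/d!-1=\sum_{k\ge1}2^{ke}\sigma_k(\tfrac11,\ldots,\tfrac1d)$ via elementary symmetric polynomials in the reciprocals, then invokes the auxiliary fact $\nu(\tfrac11+\cdots+\tfrac1d)=-\lg(d)$ for the $k=1$ term and bounds the higher terms separately. You instead pass to odd parts and observe the per-factor congruence $\U(2^e+j)=\U(j)+2^{e-\nu(j)}\equiv\U(j)\pmod{2^{e-\lg(d)}}$, which multiplies up directly. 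Your route is slightly more elementary: it avoids both the harmonic-sum valuation lemma and the case analysis for $k\ge2$, at the cost of only yielding the congruence rather than the paper's implicit sharper statement that the difference has $2$-exponent exactly $e-\lg(d)$. For the purposes of Theorem~\ref{Delthm}, the congruence is all that is needed.
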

\begin{proof} First note that $\nu(\frac11+\frac12+\cdots+\frac1d)=-\lg(d)$, as is easily proved by induction on $d$. From this, we obtain,
$$\frac{(2^e+d)!/2^e!-d!}{d!}=\sum_{j\ge1}2^{je}\sigma_j(\tfrac11,\ldots,\tfrac1d)
\equiv0\mod 2^{e-\lg(d)},$$
where $\sigma_j$ is the elementary symmetric polynomial. The terms with $j\ge2$ are easily seen to have 2-exponent larger than that with $j=1$
by consideration of the largest 2-exponent in the denominator of any term of $\sigma_j$.
Multiplying the above by the odd number $2^{2^e+d-1-\a(d)}/(2^e+d)!$ yields the claim of the lemma.\end{proof}

 \begin{lem}\label{lem2} If $0<r\le D$, then $\nu(2^{2^e-r-1}/(2^e-r)!)\ge e-1-\lg(D)$.\end{lem}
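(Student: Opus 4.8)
The plan is to reduce the lemma to a purely combinatorial statement about binary digit sums and then settle that statement by a one-line decomposition. First I would apply the identity $\nu(m!)=m-\a(m)$ with $m=2^e-r$ to rewrite
$$\nu\Bigl(\frac{2^{2^e-r-1}}{(2^e-r)!}\Bigr)=(2^e-r-1)-\bigl((2^e-r)-\a(2^e-r)\bigr)=\a(2^e-r)-1,$$
so that the desired bound $\nu\bigl(2^{2^e-r-1}/(2^e-r)!\bigr)\ge e-1-\lg(D)$ becomes exactly $\a(2^e-r)\ge e-\lg(D)$.

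For the latter I would set $f=\lg(D)$, so that the hypothesis $0<r\le D$ gives $1\le r\le D\le 2^{f+1}-1$, and write
$$2^e-r=\bigl(2^e-2^{f+1}\bigr)+\bigl(2^{f+1}-r\bigr).$$
Here $2^e-2^{f+1}=2^{f+1}(2^{e-f-1}-1)$ has a binary expansion consisting of the $e-f-1$ consecutive $1$'s in positions $f+1,\dots,e-1$, while $2^{f+1}-r$ is a positive integer strictly less than $2^{f+1}$ and hence contributes at least one more $1$ in some position $\le f$. Since these two blocks of positions are disjoint there is no carrying, so $\a(2^e-r)\ge(e-f-1)+1=e-\lg(D)$, which finishes the main case. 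The degenerate case $\lg(D)\ge e$, should it arise in the application at all, is immediate: then $e-1-\lg(D)\le-1\le\a(2^e-r)-1$.

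The argument is elementary and I do not anticipate a real obstacle; the only point needing care is the bookkeeping in the binary decomposition — namely noting that the hypotheses $r\ge1$ and $r\le D\le 2^{f+1}-1$ are precisely what force $2^{f+1}-r>0$ (so the lower block is nonempty) and $2^{f+1}-r<2^{f+1}$ (so it does not overlap the upper block of $1$'s), which is what makes the digit counts add without interference. This computation is a close relative of the one used to prove Lemma \ref{lem1}.
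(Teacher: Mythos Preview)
Your proof is correct and follows essentially the same route as the paper: both compute $\nu\bigl(2^{2^e-r-1}/(2^e-r)!\bigr)=\a(2^e-r)-1$ via Legendre's formula and then show $\a(2^e-r)\ge e-\lg(D)$. The only cosmetic difference is that the paper obtains this bound in one line from the complementation identity $\a(2^e-r)=e-\a(r-1)$ together with $\a(r-1)\le\lg(D)$, whereas you unpack the same fact via the explicit binary decomposition $2^e-r=(2^e-2^{f+1})+(2^{f+1}-r)$.
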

 \begin{proof} The indicated exponent equals $\a(2^e-r)-1=e-\a(r-1)-1$, and $\a(r-1)\le\lg(D)$.\end{proof}

\begin{proof}[Proof of Theorem \ref{Delthm}] We have
\begin{eqnarray*}P_{2^e+\Delta}(x)&=&\sum_{i\ge0}\frac1{(2i+1)!(2^e+\Delta-2i-1)!}\sum_{k=0}^xS(x,k)k!\tbinom{2i+1}k\\
&=&\sum_{k=0}^xS(x,k)\frac1{(2^e+\Delta-k)!}\sum_{i\ge0}\binom{2^e+\Delta-k}{2i+1-k}\\
&=&\sum_{k=0}^xS(x,k)\frac1{(2^e+\Delta-k)!}2^{2^e+\Delta-k-1}\\
&\equiv&\sum_{k=0}^\Delta S(x,k)\frac1{(2^e+\Delta-k)!}2^{2^e+\Delta-k-1}\mod 2^{e-\lg(x-\Delta)-1}\\
&\equiv&\sum_{k=0}^\Delta S(x,k)\frac1{2^e!(\Delta-k)!}2^{2^e+\Delta-k-1}\mod 2^{e-\lg(\Delta)+1}\\
&=&\frac1{\U(2^e!)\Delta!}\sum_{k=0}^\Delta S(x,k)k!2^{\Delta-k}\tbinom \Delta k\\
&=&\frac1{\U(2^e!)\Delta!}\sum_{k=0}^\Delta\sum_j(-1)^{k+j}\tbinom kjj^x2^{\Delta-k}\tbinom{\Delta}k\\
&=&\frac1{\U(2^e!)\Delta!}\sum_jj^x\tbinom{\Delta}j.\end{eqnarray*}
Lemmas \ref{lem2} and \ref{lem1} are used to prove the two congruences.
Equality occurs at the first congruence if $\Delta\ge x$ since $S(x,k)=0$ when $k>x$.
To see the last step, let $\ell=\Delta-k$ and obtain
$$\sum_{k=0}^\Delta(-1)^{k+j}\tbinom kj 2^{\Delta-k}\tbinom{\Delta}k=\tbinom\Delta j(-1)^{\Delta-j}\sum_{\ell=0}^\Delta(-2)^\ell\tbinom{\Delta-j}\ell=\tbinom{\Delta}j.$$
\end{proof}

\def\line{\rule{.6in}{.6pt}}


\begin{thebibliography}{99}
\bibitem{BDSU} M. Bendersky and D. M. Davis, {\em 2-primary
$v_1$-periodic homotopy groups of ${\rm SU}(n)$}, Amer. J. Math.
{\bf 114} (1991) 529--544.
\bibitem{Car} L. Carlitz, {\em Congruences for generalized Bell and Stirling numbers}, Duke Math Jour {\bf 22} (1955) 193-205.
\bibitem{Cl} F. Clarke, {\em Hensel's Lemma and the divisibility of
Stirling-like numbers}, Jour Number Theory {\bf 52} (1995) 69-84.
\bibitem{CK} M. C. Crabb and K. Knapp, {\em The Hurewicz map on stunted complex projective spaces},
Amer Jour Math {\bf 110} (1988) 783-809.
\bibitem{D2} D. M. Davis, {\em Divisibility by 2 of Stirling-like numbers}, Proc Amer Math Soc {\bf 110} (1990) 597-600.
\bibitem{D23} \line, {\em Divisibility by 2 and 3 of certain Stirling numbers}, Integers {\bf 8} (2008) A56, 25pp.
\bibitem{partial} \line, {\em Divisibility by 2 of partial Stirling numbers}, Funct et Approx Comm Math {\bf 49} (2013) 29-56.
\bibitem{Monthly} \line, {\em Binomial coefficients involving infinite powers of primes}, to appear in American Math Monthly.
\bibitem{SU2e} \line, {\em $v_1$-periodic 2-exponents of $SU(2^e)$ and $SU(2^e+1)$}, Jour Pure Appl Alg {\bf 216} (2012) 1268-1272.
\bibitem{DS1} D. M. Davis and Z. W. Sun, {\em A number-theoretic approach to homotopy exponents of $SU(n)$},
Jour Pure Appl Alg {\bf 209} (2007) 57-69.
\bibitem{Kw} Y.~H.~Kwong, {\em Minimum periods of $S(n,k)$ modulo $M$}, Fibonacci Quar {\bf 27} (1989) 217-221.
\bibitem{Lun} A. T. Lundell, {\em A divisibility property for Stirling numbers}, Jour Number Theory {\bf 10} (1978) 35-54.
\bibitem{Lune} \line, {\em Generalized $e$-invariants and the numbers of James}, Quar Jour Math Oxford {\bf 25} (1974) 427-440.
\bibitem{AB} M. Petkovsek, H. S. Wilf, and D. Zeilberger, {\em A=B}, (1995) A.K.Peters.
\bibitem{You} P. T. Young, {\em Congruences for degenerate number sequences}, Discrete Math {\bf 270} (2003) 279-289.


\end{thebibliography}
\end{document}